%version of 2016.12.19.
%sed -i 's/dotsep=0.16/dotsep=0.08/' mcafig1.tex
\documentclass[11pt]{amsart}
\usepackage{amsmath,amssymb,amsthm,hyperref}
\usepackage{graphicx,graphics}
\usepackage{tikz}  
\usepackage{epstopdf}
\usepackage{arcs}%for \overarc
\usepackage{psfrag}          % replace text in figures
\usepackage{buczomca}

\usepackage{version} % include, exclude and comment
\numberwithin{equation}{section}
\usepackage{enumerate}
\usepackage{datetime}
%\usepackage[usenames,dvipsnames]{pstricks} 
 %\usepackage{pst-grad} % For gradients
 %\usepackage{pst-plot} % For axes
%\usepackage{auto-pst-pdf}
%pdflatex -shell-escape mca10
%\usepackage{standalone}
%\numberwithin{equation}{section}
%\numberwithin{section}
%\fancyhf{}
%\fancyfoot[R]{\today\ \currenttime}
%\fancyhead[R]{\jobname\  \today\ \currenttime\hskip 3cm \thepage}
%\pagestyle{fancy}
\usepackage[bf]{caption}     % captionstyles
\usepackage{bm}  % bold symbols in math mode \boldsymbol{\alpha}
%\usepackage{tikz}
%\fancyhf{}
%\fancyfoot[R]{\today\ \currenttime}
%\fancyhead[R]{\jobname\  \the\year-\the\month-\the\day\ \currenttime\hskip 0cm\  \thepage}
%\pagestyle{fancy}

 \newtheorem{theorem}{Theorem}[section]
 \newtheorem{corollary}[theorem]{Corollary}
 
 \newtheorem{lemma}[theorem]{Lemma}
 \newtheorem{proposition}[theorem]{Proposition}
 
 \theoremstyle{definition}
 \newtheorem{definition}[theorem]{Definition}
 \newtheorem{remark}[theorem]{Remark}
 \newtheorem{question}[theorem]{Question}
 
   \newtheorem{conjecture}[theorem]{Conjecture}
 
 %%%%%%%%%%%%%%%%%%%%%%%%%%%%%%%%%%%%%%%%%%%%%%%%%%%%%%%%%
%%%%%%%%%%%%%%%%%%%%%%%%%%%%%%%%%%%%%%%%%%%%%%%%%%%%%%%%%
\usepackage{color}
\definecolor{aquam}{rgb}{0.5,1.0,1.0}
\definecolor{bbrown}{rgb}{0.75,0.38,0.15}
\definecolor{Cyan}{rgb}{0,0.6,0.6}
\definecolor{Darkblue}{rgb}{0,0,1}
\definecolor{Dodgerblue2}{rgb}{0,0.5,1}
\definecolor{Green}{rgb}{0,0.5,0.1}
\definecolor{Kahki}{rgb}{1,1,0.5}
\definecolor{Magenta}{rgb}{1,0,1}
\definecolor{bMagenta}{rgb}{1,.6,1}
\definecolor{Orange}{rgb}{0.8,0.3,0}
\definecolor{dOrchid}{rgb}{0.7,0.2,0.4}
\definecolor{Orchid}{rgb}{1,0.5,1}
\definecolor{Purple}{rgb}{0.65,0.07,0.85}
\definecolor{Royalblue}{rgb}{0.6,0.85,0.87}
\definecolor{Tan}{rgb}{0.54,0.42,0.23}
\definecolor{bTan}{rgb}{0.94,0.82,0.63}
\definecolor{Turquoise}{rgb}{0,0.85,0.87}
\definecolor{Yellow}{rgb}{1,1,0}
\definecolor{bYellow}{rgb}{1,1,0.6}
\definecolor{bRed}{rgb}{1,0.7,0.7}
\definecolor{boxcolb}{rgb}{0.87,0.77,0.75}%rosybrown
\definecolor{boxcol}{rgb}{0.6,0.85,0.87}%cadetblue
\definecolor{boxcolgreen}{rgb}{0.64,0.93,0.79}
\definecolor{boxcolaa}{rgb}{.75,.99,.70}
\definecolor{boxcolbb}{rgb}{0.39,0.40,0.56}
\definecolor{boxcolcc}{rgb}{1,0.81,0.65}
\definecolor{yy}{rgb}{0.43,0.21,.18}
\definecolor{gA}{gray}{0.5}
\definecolor{gB}{gray}{0.8}
\definecolor{gC}{gray}{0.9}
%aquamarine}
%darkblue
%dodgerblue2
%{\color[named]{Brown}}

%cyan
%green
%kh
%{\color[named]{Magenta}}

%{\color[named]{Orange}}
%{\color[named]{Purple}}

%tan
%{\color[named]{Turquoise}}

%{Yellow}

%Tan

\newcommand\sk{\smallskip}
\newcommand\mk{\medskip}

%%%%%%%%%%%%%%%%%%%%%%%%%%%%%%%%%%%%%%%%%%%%%%%%%%%%%%%%%
%%%%%%%%%%%%%%%%%%%%%%%%%%%%%%%%%%%%%%%%%%%%%%%%%%%%%%%%%

 \newcommand\zu{[0,1]}
\newcommand{\N}{\ensuremath{\mathbb N}} %natural numbers
 %torus
\newcommand{\R}{\ensuremath{\mathbb R}} %real numbers
 %rational numbers
 %complex numbers
\newcommand{\Z}{\ensuremath{\mathbb Z}} %integers

\newcommand{\ep}{\varepsilon}
\newcommand\du{\overline{d}}
\newcommand\dl{\underline{d}}

\newcommand\wq{\widetilde{Q}}

\renewcommand{\ggg}{\gamma}

\usepackage{xcolor}
\hypersetup{
  colorlinks   = true, %Colours links instead of ugly boxes
  urlcolor     = blue, %Colour for external hyperlinks
  linkcolor    = blue, %Colour of internal links
  citecolor   = red %Colour of citations
}

\title[Measures and Annuli]{Measures, annuli and dimensions}
\author[Z. Buczolich]{Zolt\'an Buczolich$^*$}
\address{Department of Analysis, ELTE E\"otv\"os Lor\'and\\
University, P\'azm\'any P\'eter S\'et\'any 1/c, 1117 Budapest, Hungary}
\email{zoltan.buczolich@ttk.elte.hu}
\urladdr{http://buczo.web.elte.hu, ORCID Id: 0000-0001-5481-8797}
\author[S. Seuret]{St\'ephane Seuret$^\text{\textdagger}$ }
\address{St\'ephane Seuret, Universit\'e Paris-Est, LAMA (UMR 8040),  UPEMLV, UPEC, CNRS, F-94010, Cr\'eteil, France}
\email{seuret@u-pec.fr }

\thanks{\scriptsize $^*$
The project leading to this application has received funding from the European Research Council (ERC) under the European Union’s Horizon 2020 research and innovation programme (grant agreement No. 741420).
This author was supported by the Hungarian National Research, Development and Innovation Office--NKFIH, Grant 124003 and  at the time of completion of this paper was holding a visiting researcher position 
at the Rényi Institute.
}

\thanks{\scriptsize $^\text{\textdagger}$ This author was supported by Grant ANR-16-CE33-0020 MULTIFRACS.
\newline\indent {\it Mathematics Subject
Classification:} Primary :   28A78,  Secondary :  28A80, 37A05, 37B20, 37A25, 37D25.
\newline\indent {\it Keywords:}   density of measures, distribution of measures, annuli in different metric spaces, return times, lower and upper Hausdorff dimension.}

\date{\today}
\begin{document}
\maketitle

\medskip

\begin{abstract}

Given a Radon probability measure $\mu$ supported in $\mathbb{R}^d$, we are interested in those points  $x$ around which the measure is concentrated infinitely many times on  thin annuli  centered  at $x$.
Depending on the lower and upper dimension of $\mu$,  the metric used in the space
and   the thinness  of the annuli, 
we obtain results and examples when such points are of $\mu $-measure $0$ or of $\mu$-measure $1$.

The measure concentration we study is related  to "bad points"  for the Poincaré recurrence theorem
and to 
the first return
times to shrinking balls under iteration generated by a weakly Markov  dynamical system.

The study of thin annuli and spherical averages is also important in many dimension-related problems, including Kakeya-type problems and
Falconer’s distance set conjecture. 
\end{abstract}

\setcounter{tocdepth}{2}
\tableofcontents

%%%%%%%%%%%%%%%%%%%%%%%%%
%%%%%%%%%%%%%%%%%%%%%%%%%
%%%%%%%%%%%%%%%%%%%%%%%%%
%%%%%%%%%%%%%%%%%%%%%%%%%
%%%%%%%%%%%%%%%%%%%%%%%%%
%%%%%%%%%%%%%%%%%%%%%%%%%
%%%%%%%%%%%%%%%%%%%%%%%%%
%%%%%%%%%%%%%%%%%%%%%%%%%
%%%%%%%%%%%%%%%%%%%%%%%%%
%%%%%%%%%%%%%%%%%%%%%%%%%
%%%%%%%%%%%%%%%%%%%%%%%%%
%%%%%%%%%%%%%%%%%%%%%%%%%
%%%%%%%%%%%%%%%%%%%%%%%%%
\section{Introduction and  main results}\label{introduction}

In the following,  $\mu$ is
a Radon probability measure  supported in $\mathbb{R}^d$ and
$\dim_{H}$ denotes the Hausdorff dimension. We denote by
$B(x,r)$ the closed ball $\{y\in \R^d: \|x-y\|\leq r\}$, which obviously depends on the norm $\|\cdot\|$ chosen on $\R^d$.   We consider norms which are equivalent with
the most common Euclidean one,
$||.||_{2}$.
%%%%%%%%%%%%%%%%%%%%%%%%%
\begin{definition}
For every $x\in \R^d$, $0<r<1$ and $\delta \geq 1$,   define the annulus
\begin{equation}
\label{defA}
A(x,r,\delta) = B (x,r)\setminus B (x,r-r^\delta).
\end{equation}
%We use the subscript $A_2(x,r,\delta) $ or $A_\infty(x,r,\delta) $ according to the norm we use.
%The $\delta $ the exponent of the annulus.

We say that 
$$\text{$P_{\mu }(x,r,\delta,\eta)$ holds  when } \mu\big(A (x,r,\delta)\big) \geq \eta  \cdot \mu\big(B(x,r)\big).$$
 
Finally, we set 
$$E_{\mu}(\delta,\eta) = \{x\in \R^d: P_{\mu}(x,r_n,\delta,\eta) \mbox{ holds for a sequence $(r_n)_{n\geq 1}\to 0$}\}.$$
\end{definition}
%%%%%%%%%%%%%%%%%%%%%%%%%

Intuitively,  around points belonging to $E_{\mu}(\delta,\eta) $, the measure $\mu$ concentrates a substantial part of its local mass on a very thin annulus (since $r^\delta<\!\!\!<r$). The larger $\delta$, the thinner the annulus: $E_{\mu}(\delta',\eta)  \subset E_{\mu}(\delta,\eta) $ when $\delta' \geq \delta$.  Our goal is to investigate the size of the sets $E_{\mu}(\delta,\eta) $.

In this paper, we only consider diffuse measures, i.e. without any Dirac mass: $\mu(B(x,0)) = 0$, for every $x\in \R^d$.
In this case,   $\mu(E_\mu(1,\eta)) = 1$ for every $\eta\in [0,1]$. The question we investigate hereafter concerns the size of  $E_\mu(\delta,\eta)$ for $\delta>1$, and it appears that the answer depends on the measure $\mu$,   the thinness $\delta$ and the norm used to define the annuli, in a subtle manner.

%%%%%%%%%%%%%%%%

\medskip

The sets $E_\mu(\delta,\eta)$ appear in various places. For instance, in \cite{CHAZ}, it is proved that if $\mu$ is   the Sinai-Ruelle-Bowen measure   associated with  a non-uniformly hyperbolic dynamical system $(X,T,\mu)$,  then  the elements  of $E_\mu(\delta,\eta)$ are "bad points" 
for the Poincar\'e recurrence theorem, in the sense that given $r>0$, when $x\in E_\mu(\delta,\eta)$, the iterates  $T^j x$ of $x$  come back inside $B(x,r)$ not as often as expected.  

More recently, Pawelec,   Urba\'nski,  and Zdunik \cite{Urbanski-annuli} investigated the first return
times to shrinking balls under iteration generated by a weakly Markov  dynamical systems, and had to deal with what they call the {\em Thin Annuli Property}. 
This property has several versions in  \cite{Urbanski-annuli}, and  is very similar to  belonging to the complementary set  of our  sets $E_{\mu}(\delta,\eta) $, except that the exponent $\delta$  in $P_{\mu}(x,r,\delta,\eta) $  depends on $r$, and may tend   to infinity when $r$ tends to $0$. In weaker versions of the Thin Annuli Property there are also restrictions on the range of radii.
The conditions we impose to the elements of  $E_{\mu}(\delta,\eta) $  are   stronger, 
that is, they imply that the so-called   {\em Full Thin Annuli Property} of  \cite{Urbanski-annuli}  holds.   In the same paper, the authors prove (Theorem C) that  every finite Borel measure $\mu$ in a Euclidean space $\R^d$, satisfies the
Thick Thin Annuli Property (this means that for arbitrary measures the range of radii for which the Thin Annuli Property holds is more limited). Our theorems below state that in many situations (for instance, for all  measures $\mu$ with large lower dimensions), Theorem C can be improved.

Let us also mention   that Theorem D of \cite{Urbanski-annuli} shows  that  certain measures coming from  conformal geometrically irreducible Iterated Function Systems satisfy   the Full Thin Annuli Property. 
We will come back to this later in the introduction. 

Similar questions appear also when studying orbit distribution of various groups acting on $\R^2$ (Theorem 3.2 of \cite{Schap1}). See also   \cite{HaydnWa,Schap2} for other occurrences  of such questions. Connections  with other works are also made later in the introduction.

%%%%%%%%%%%%%%%%

\medskip

We start by proving that, regardless of the norm in $\R^d$,    measures with large lower dimension   do not charge annuli at small scales if the exponent $\delta$ defining the annuli is sufficiently large,
where "sufficiently large" depends on the lower and upper dimensions of $\mmm$, whose definitions are recalled now.

%%%%%%%%%%%%%%%%%%%%%%%%%
\begin{definition}
 Let $\mu$ be a Radon probability measure on $\R^d$.
 
 The lower and upper dimensions of $\mu$  are defined as
\begin{align*}\underline{\dim}(\mu)=  \sup \{\alpha\geq 0: & \ \mbox{for $\mu$-a.e $x$, }\\ &\mbox{$\exists \,r_x>0$,    $\forall \, 0<r<r_x$, $\mu(B(x,r)) \leq r^\alpha$}\}
\end{align*}
and
\begin{align*}
\overline{\dim}(\mu)= \inf \{\beta\geq 0:& \ \mbox{for $\mu$-a.e $x$, } \\ & \mbox{$\exists \,r_x>0$,    $\forall \, 0<r<r_x$,  $\mu(B(x,r)) \geq r^\beta$}\}.
\end{align*}
\end{definition}
%%%%%%%%%%%%%%%%%%%%%%%%%

Our first result is the following.
%%%%%%%%%%%%%%%%%%%%%%%%%
\begin{theorem}
\label{mainth1}
Let $\mu$ be a probability measure on $\R^d$ such that $\underline{\dim}(\mu) > d-1$. 

For every $\delta> \frac{\overline{\dim}(\mu) -(d-1)}{\underline{\dim}(\mu) -(d-1)}$ and $\eta \in (0,1]$, one has
$\mu\big(E_\mu(\delta,\eta)\big) =0$.
\end{theorem}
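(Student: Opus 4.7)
The plan is to prove the slightly stronger statement that for $\mu$-a.e.\ $x$,
$$\lim_{r\to 0}\frac{\mu(A(x,r,\delta))}{\mu(B(x,r))}=0,$$
which immediately yields $\mu(E_\mu(\delta,\eta))=0$ for every $\eta\in(0,1]$. To this end I first pick auxiliary exponents $\alpha,\beta$ with $d-1<\alpha<\underline{\dim}(\mu)$ and $\beta>\overline{\dim}(\mu)$, close enough to the true dimensions so that
$$\gamma:=\delta\alpha-(d-1)(\delta-1)>\beta,$$
which is possible thanks to the hypothesis $\delta>\frac{\overline{\dim}(\mu)-(d-1)}{\underline{\dim}(\mu)-(d-1)}$. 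From the definitions of $\underline{\dim}(\mu)$ and $\overline{\dim}(\mu)$, a standard truncation produces, for every $\ep>0$, a Borel set $F_\ep$ and a scale $r_0>0$ with $\mu(F_\ep)\geq 1-\ep$ and
$$r^\beta\leq \mu(B(x,r))\leq r^\alpha\quad \text{for all } x\in F_\ep,\ 0<r\leq r_0.$$

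The heart of the argument is a covering estimate for the restricted measure $\mu_\ep:=\mu|_{F_\ep}$. A key observation is that $\mu_\ep$ enjoys a \emph{uniform} Frostman-type bound at every point of $\R^d$: if $B(y,s)\cap F_\ep\neq\emptyset$ (with $s<r_0/2$), picking $y'\in B(y,s)\cap F_\ep$ yields $B(y,s)\subseteq B(y',2s)$, and therefore $\mu_\ep(B(y,s))\leq (2s)^\alpha$. A direct volume/packing argument then covers the thin spherical shell $A(x,r,\delta)$ by at most $C_d\,r^{-(d-1)(\delta-1)}$ balls of radius $r^\delta$ (the shell has thickness $r^\delta$ and ``area'' of order $r^{d-1}$), and hence
$$\mu_\ep(A(x,r,\delta))\leq C\,r^{\delta\alpha-(d-1)(\delta-1)}=C\,r^\gamma.$$

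To pass from $\mu_\ep$ back to $\mu$, I invoke the Lebesgue density theorem for Radon measures applied to the Borel set $F_\ep^c$: for $\mu$-a.e.\ $x\in F_\ep$,
$$\frac{\mu(B(x,r)\cap F_\ep^c)}{\mu(B(x,r))}\longrightarrow 0 \quad \text{as } r\to 0.$$
For such $x$, using $A(x,r,\delta)\subseteq B(x,r)$, the lower bound $\mu(B(x,r))\geq r^\beta$, and $\gamma>\beta$,
$$\frac{\mu(A(x,r,\delta))}{\mu(B(x,r))}\leq \frac{\mu_\ep(A(x,r,\delta))}{\mu(B(x,r))}+\frac{\mu(B(x,r)\cap F_\ep^c)}{\mu(B(x,r))}\leq C\,r^{\gamma-\beta}+o(1)\longrightarrow 0.$$
Thus $\mu$-a.e.\ $x\in F_\ep$ is eventually outside $E_\mu(\delta,\eta)$; taking $\ep=1/k$ and using $\mu\bigl(\bigcup_k F_{1/k}\bigr)=1$ gives $\mu(E_\mu(\delta,\eta))=0$.

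The main obstacle is that the assumption $\underline{\dim}(\mu)>\alpha$ only yields $\mu(B(y,s))\leq s^\alpha$ at $\mu$-typical points $y$, whereas the covering of the thin annulus requires such bounds at centers lying on the sphere of radius $r$ around $x$, which need not be $\mu$-typical. The truncation $\mu_\ep$ upgrades the bound to every $y\in\R^d$, and the Lebesgue density theorem absorbs the small leftover mass $\mu(A(x,r,\delta)\cap F_\ep^c)$ near typical centers; together these turn the naive heuristic ``thickness$^{\alpha}\times$(area/thickness$^{d-1}$)'' into a rigorous estimate.
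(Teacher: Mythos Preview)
Your proof is correct and follows essentially the same approach as the paper's: both restrict to a ``good'' set $F_\ep$ (the paper's $D_{\ep,1/p}$) where the scaling bounds $r^\beta\le\mu(B(x,r))\le r^\alpha$ hold uniformly, cover $A(x,r,\delta)$ by $C_d\,r^{(d-1)(1-\delta)}$ balls of radius $r^\delta$, bound the $\mu|_{F_\ep}$-mass of each such ball by shifting its center to a nearby point of $F_\ep$, and invoke the Lebesgue density theorem for Radon measures to absorb the contribution of $F_\ep^c$. The only difference is cosmetic: the paper argues by contradiction at a single density point, whereas you prove directly the slightly stronger statement $\mu(A(x,r,\delta))/\mu(B(x,r))\to 0$ for $\mu$-a.e.\ $x$.
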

%%%%%%%%%%%%%%%%%%%%%%%%%

Hence, for mono-dimensional measures $\mu$ satisfying $\overline{\dim}(\mu) = \underline{\dim}(\mu)>d-1$, $\mu\big(E_\mu(\delta,\eta)\big) =0$ for every $\delta>1$. Observe that Theorem \ref{mainth1} holds true regardless of the underlying metric used to define $A (x,r,\delta) $.
Also, in dimension $d=1$, Theorem \ref{mainth1} is simpler and rewrites as follows:  $\mu\big(E_\mu(\delta,\eta)\big) =0$ for every  $\delta> \frac{\overline{\dim}(\mu)}{\underline{\dim}(\mu) }$ and $\eta \in (0,1]$,

\mk

Next theorem
 shows that Theorem \ref{mainth1} is optimal if   the  $||.||_{\oo}$ metric is used.

%%%%%%%%%%%%%%%%%%%%%%%%%
\begin{theorem}
\label{mainth2} 
 Suppose that the metric 
 generated by the norm
 $||.||_{\oo}=\max \{ |x_{i}|:\, i=1,...,d \}$ is used to define the annuli $A(x,r,\delta)$  in \eqref{defA}.
  
For every $d-1<    \underline{d}  <  \overline{d}  < d$ and    every $  \eta\in (0,1) $, there exists a probability measure $\mu$ on $\mathbb{R}^{d}$ such that $\underline{\dim}(\mu) = \underline{d}$, $\overline{\dim}(\mu) =\overline{d}$  and 
\begin{equation}
\label{eqmu}
\mu\left(E_\mu\left(  \frac{\overline{d}-(d-1)}{\underline{d}-(d-1)  },\eta\right)\right ) =1. 
\end{equation}

\end{theorem}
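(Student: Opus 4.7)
The plan is to take a product measure $\mu=\nu\otimes\lambda_{d-1}$, where $\lambda_{d-1}$ is $(d-1)$-dimensional Lebesgue measure on $[0,1]^{d-1}$ and $\nu$ is a probability measure on $[0,1]$ to be constructed. Under the $\|\cdot\|_\infty$-metric, each ball $B(x,r)$ is a cube, hence
\[
\mu(B(x,r)) = \nu([x_1-r,x_1+r]) \cdot (2r)^{d-1},
\]
and the annulus $A(x,r,\delta_0)$ contains the first-coordinate slab, whose $\mu$-mass is $\nu([x_1+r-r^{\delta_0},x_1+r])\cdot(2r)^{d-1}$. Setting $\underline s := \underline d - (d-1) \in (0,1)$ and $\overline s := \overline d - (d-1) \in (0,1)$, so that $\delta_0 = \overline s/\underline s$, it suffices to construct $\nu$ on $[0,1]$ with $\underline{\dim}(\nu) = \underline s$, $\overline{\dim}(\nu) = \overline s$, and such that for $\nu$-a.e. $x$, some sequence $r_n \downarrow 0$ satisfies $\nu([x+r_n-r_n^{\delta_0}, x+r_n]) \geq \eta \, \nu([x-r_n, x+r_n])$.

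I would construct $\nu$ as a Cantor-type measure supported on a nested family $(\mathcal{F}_k)_{k\geq 0}$ of collections of disjoint intervals of common length $\ell_k$, each assigned equal mass $1/|\mathcal{F}_k|$. The construction alternates two kinds of steps. A \emph{slow step} splits every level-$(k-1)$ interval into $m$ uniformly spaced subintervals of length $\ell_k = \rho\,\ell_{k-1}$, with $\rho$ small enough (say $\rho \leq 1/(3m)$) that the resulting gaps between consecutive subintervals exceed $\ell_{k-1}$; choosing $\log m/\log(1/\rho) = \overline s$, slow steps contribute local dimension $\overline s$. A \emph{fast (``pearl'') step} performs a big scale jump $\ell_k = \ell_{k-1}^{\delta_0}/N$, where $N := \lceil (1+\eta)/(1-\eta) \rceil$: inside each level-$(k-1)$ interval $[a, a+\ell_{k-1}]$, one places a single ``singleton'' child $[a, a+\ell_k]$ at the left end and $N$ adjacent ``cluster'' children packed into $[a+\ell_{k-1}-\ell_{k-1}^{\delta_0}, a+\ell_{k-1}]$ at the right end. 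Fast steps occur along a very sparse sequence $(k_n)_{n\geq 1}$, with long blocks of slow steps between them so that the local dimension has time to climb from $\underline s$ back up to $\overline s$.

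A direct scale-count computation shows that each fast step multiplies $\log(1/\ell_k)$ by $\delta_0$ while barely changing $\log|\mathcal{F}_k|$, hence divides the local dimension by $\delta_0$; long slow blocks then restore it to $\overline s$. With appropriate block lengths, the local dimension oscillates between $\underline s$ (at scales $\ell_{k_n}$ right after fast steps) and $\overline s$ (deep in slow blocks), yielding $\underline{\dim}(\nu)=\underline s$, $\overline{\dim}(\nu)=\overline s$, and hence $\underline{\dim}(\mu) = \underline d$, $\overline{\dim}(\mu) = \overline d$. For the annulus concentration, note that under $\nu$ a random point independently lands in the singleton (rather than in the cluster) at each fast step with probability $1/(N+1)$; since these events are mutually independent, the second Borel--Cantelli lemma gives that $\nu$-a.e. $x$ falls in the singleton at infinitely many fast-step indices $(k_{n_j})$. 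At any such step, set $r_{n_j} := \ell_{k_{n_j}-1}$. Then $B(x, r_{n_j})$ exactly covers the parent level-$(k_{n_j}-1)$ interval and nothing else (thanks to the oversized gaps created during slow steps), so $\nu(B(x,r_{n_j}))$ equals the parent mass; meanwhile the right-hand slab $[x+r_{n_j}-r_{n_j}^{\delta_0}, x+r_{n_j}]$ contains all but at most one of the $N$ cluster children, whose total mass is $\geq \frac{N-1}{N+1} \geq \eta$ times the parent mass.

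The main obstacle is to calibrate the parameters $(\ell_k)$, $(|\mathcal{F}_k|)$, $\rho$, $N$, and the schedule $(k_n)$ so that the three ingredients work simultaneously: fast-step jumps must yield exactly the dimension drop to $\underline s$; slow blocks must be long enough to bring the dimension back to $\overline s$ without overshooting; and gaps at every level must stay above $\ell_{k_n-1}$ to guarantee that $B(x, r_{n_j})$ captures only a single parent cylinder. One also has to check, using the paper's definition of $\underline{\dim}$ and $\overline{\dim}$ (which requires bounds on $\nu(B(x,r))$ at \emph{all} small $r$, not only at scales $\ell_k$), that the local dimension behaves correctly on the intermediate scales interpolating between consecutive generations.
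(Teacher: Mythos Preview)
Your approach is correct and takes a genuinely different route from the paper. The paper constructs $\mu$ directly in $\mathbb{R}^d$ via two alternating dyadic subdivision schemes: scheme~\textbf{A} homogenizes the mass to pin the local dimension near $\overline{d}$, while scheme~\textbf{B} pushes a fraction $\eta^*=\sqrt{\eta}$ of the mass of each cube $Q\in\mathcal{D}_m$ onto its ``smallest face'' $\partial_{\psi(m),1}Q$ (a layer of $\mathcal{D}_{\psi(m)}$-cubes, $\psi(m)\approx m\delta_0$) and places the remaining mass in a small central cube $\widetilde{Q}$; points in $\widetilde{Q}$ then see the face as an $\|\cdot\|_\infty$-annulus of the right thickness. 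A Borel--Cantelli argument (the events ``$x$ lies in the central cube at step $n$'' are independent with probability $1-\eta^*$) finishes the proof, exactly as in your sketch. Your reduction $\mu=\nu\otimes\lambda_{d-1}$ is a real simplification: because $\|\cdot\|_\infty$-balls are products and the annulus contains the first-coordinate slab, the $(2r)^{d-1}$ factor cancels in the ratio $\mu(A)/\mu(B)$, and the problem becomes one-dimensional with target exponents $\underline{s}=\underline{d}-(d-1)$, $\overline{s}=\overline{d}-(d-1)$ and the same $\delta_0=\overline{s}/\underline{s}$. Your singleton/cluster fast step is the one-dimensional analogue of the paper's center/face split, and the slow steps play the role of scheme~\textbf{A}. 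The bookkeeping you flag (gaps of size $\geq \ell_{k_n-1}$ before each fast step; dimension control at intermediate scales) goes through once slow steps are arranged to leave a gap at both endpoints of each parent, which forces the minimal gap at level $k$ after a slow step to be $\geq \ell_k$. One trade-off worth noting: the product construction is intrinsically limited to $\underline{\dim}(\mu)\geq d-1$ (since $\underline{\dim}(\nu)\geq 0$), so it cannot be pushed to cover Theorem~\ref{mainth3}, whereas the paper's direct $d$-dimensional scheme adapts with only minor changes (their scheme~\textbf{C}) to the regime $\underline{d}\leq d-1$.
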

%%%%%%%%%%%%%%%%%%%%%%%%%

 \begin{remark}
In Theorem \ref{mainth2} the case $\overline{d} =\underline{d} $ is trivial
since as noticed above, $\mmm(E_{\mmm}(1,\hhh))=1$ is always true for any non-atomic measure. 
 \end{remark}

\mk

Still in the $||.||_{\oo}$  case,   we further  investigate what happens  for measures of   lower dimension  less than $d-1$. The quite surprising result is that for such measures $\mu$,    the worse scenario may always happen, in the sense that it is possible that $\mu$ charges only points around which the mass is infinitely often concentrated on small annuli. 

%%%%%%%%%%%%%%%%%%%%%%%%%
\begin{theorem}
\label{mainth3}

 Suppose that $d\geq 2$ and that  the  metric $||.||_{\infty}$ is used.
For every $   \underline{d} \leq d-1$,   $ \underline{d} \leq  \overline{d}\leq d$,   every   $  \eta\in (0,1) $  and every $\delta> 1$, 
 there exists a probability measure $\mu$ on $\mathbb{R} ^{d} $ such that $\underline{\dim}(\mu) = \underline{d}$, $\overline{\dim}(\mu) =\overline{d}$  and 
\begin{equation}
\label{conclusion3}
 \mu\big(E_\mu(\delta ,\eta)\big ) =1.
 \end{equation}
 \end{theorem}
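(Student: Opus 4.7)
The strategy is to exploit the product structure of $\|\cdot\|_\infty$-balls: $B(x,r) = \prod_{i=1}^d [x_i-r,x_i+r]$. For a product measure $\mu = \mu_1 \otimes \nu$ on $\R^{d-1}\times\R$, writing $x = (x',x_d)$, $I_r = [x_d-r,x_d+r]$ and $I_{r,\delta} = [x_d-r+r^\delta, x_d+r-r^\delta]$, a direct computation yields
\begin{equation*}
\frac{\mu(A(x,r,\delta))}{\mu(B(x,r))}
= 1 - \frac{\mu_1(B_{d-1}(x',r-r^\delta))}{\mu_1(B_{d-1}(x',r))}\,\frac{\nu(I_{r,\delta})}{\nu(I_r)}
\;\geq\; 1 - \frac{\nu(I_{r,\delta})}{\nu(I_r)}.
\end{equation*}
Thus the statement reduces to constructing a diffuse $\nu$ on $[0,1]$ satisfying $\nu(I_{r_n,\delta})/\nu(I_{r_n})\leq 1-\eta$ for $\nu$-a.e.\ $x_d$ and some sequence $r_n\to 0$.

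\textbf{Building $\nu$.} Fix $q \in [\eta,1)$. Define $\nu$ via an ``extreme sub-intervals'' Moran construction on $[0,1]$: at each level $n$, every parent $[a, a+L_{n-1}]$ is replaced by its two end sub-intervals $[a, a+L_n]$ (weight $q$) and $[a+L_{n-1}-L_n, a+L_{n-1}]$ (weight $1-q$). Pick an infinite sparse set of \emph{annulus levels} $\mathcal A \subset \N$ on which $L_n = L_{n-1}^\delta$; on the remaining levels set $L_n/L_{n-1} = \rho$ with $\rho < 1/3$ (so that neighbouring parents lie outside $I_{r_n}$). For $y$ in the \emph{right} sub-interval at a level $n \in \mathcal A$, setting $r_n := L_{n-1}$ places the left sub-interval (of mass $qM_{n-1}(y)$, with $M_{n-1}(y)$ the parent's mass) entirely in $I_{r_n}\setminus I_{r_n,\delta}$ thanks to $L_n = L_{n-1}^\delta$, while the right sub-interval stays inside $I_{r_n,\delta}$; thus $\nu(I_{r_n}\setminus I_{r_n,\delta})/\nu(I_{r_n}) = q \geq \eta$. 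Under $\nu$, the left/right choice at successive levels is an independent Bernoulli$(q,1-q)$ trial, so Borel--Cantelli yields that $\nu$-a.e.\ $y$ visits the right sub-interval at infinitely many $n \in \mathcal A$, giving the required sequence $r_n \to 0$.

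\textbf{Dimensions and assembly.} The strong law gives $\log M_n(y) \sim -nH(q)$ for $\nu$-a.e.\ $y$, so the super-exponential drops on $\mathcal A$ force $\underline{\dim}\nu = 0$. Taking $\mathcal A$ sufficiently sparse (e.g.\ $\mathcal A = \{2^k\}$) keeps its contribution to $|\log L_n|$ asymptotically negligible, so $|\log L_n| \sim n\log(1/\rho)$ and $\overline{\dim}\nu = H(q)/\log(1/\rho)$; by varying $\rho$ one realises every $\overline{d}_2 \in (0,1]$ (while $\overline{d}_2 = 0$ is obtained with $\mathcal A = \N$). Now let $\mu_1$ be a Moran--Cantor measure on $[0,1]^{d-1}$ with prescribed $(\underline{\dim}\mu_1, \overline{\dim}\mu_1) = (\underline d_1, \overline d_1)$, constructed so that its scale structure aligns with that of $\nu$, ensuring that dimensions of $\mu = \mu_1 \otimes \nu$ add. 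Choose $\underline d_1 = \underline d$ (admissible since $\underline d \leq d-1$) and, depending on whether $\overline d - \underline d \leq 1$ or $>1$, set $(\overline d_1, \overline d_2) = (\underline d, \overline d - \underline d)$ or $(\overline d - 1, 1)$. A Fubini argument then transfers the $\nu$-a.e.\ annulus property to $\mu$-a.e., establishing \eqref{conclusion3}.

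\textbf{Main obstacle.} Two technical points require care: (i) tuning the sparsity of $\mathcal A$ together with the regular ratio $\rho$ so that the annulus property coexists with the prescribed $\overline{\dim}\nu$---the super-exponential drops on $\mathcal A$ inevitably pull $\underline{\dim}\nu$ down to $0$, leaving only $\rho$ to control $\overline{\dim}\nu$; and (ii) in the case $\overline d - \underline d > 1$, $\mu_1$ is no longer exact-dimensional, and one must synchronise the bad/good scales of $\mu_1$ and $\nu$ so that the upper dimension of their product actually attains the prescribed value $\overline d$ rather than something strictly smaller.
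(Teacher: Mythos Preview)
Your product-measure idea is genuinely different from the paper's route and, if carried out carefully, is more elegant. The paper does not exploit the tensor structure of $\|\cdot\|_\infty$-balls at all: it reuses the elaborate Cantor construction of Section~\ref{proof2} (subdivision schemes {\bf A} and {\bf B}) and introduces a further scheme {\bf C} to cope with $\underline d\le d-1$, distributing mass on a thin boundary face and a central cube of each $Q\in\mathcal D_m$. Your reduction $\mu=\mu_1\otimes\nu$ with the inequality
\[
\frac{\mu(A(x,r,\delta))}{\mu(B(x,r))}\ \ge\ 1-\frac{\nu(I_{r,\delta})}{\nu(I_r)}
\]
is clean and correct, and pushes all the work into a one-dimensional construction; this is a real simplification over the paper.

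There are, however, two genuine gaps. First, a minor one: with $r_n:=L_{n-1}$ the left sub-interval $[a,a+L_n]$ is \emph{not} entirely contained in $I_{r_n}\setminus I_{r_n,\delta}$. Indeed the left endpoint of $I_{r_n,\delta}$ is $y-L_{n-1}+L_n$, which lies in $[a,a+L_n]$ for every $y$ in the right child, and equals $a$ precisely when $y$ is the leftmost point of the right child, in which case the annulus captures none of the left child's mass. This is repairable (for instance take $L_n=(L_{n-1}/2)^\delta$ at annulus levels and set $r_n=y-a$), but it does need repairing.

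The serious gap is in the dimension analysis of $\nu$. You assert simultaneously that (a) the super-exponential drops on $\mathcal A$ force $\underline{\dim}\,\nu=0$, and (b) taking $\mathcal A=\{2^k\}$ makes the contribution of $\mathcal A$ to $|\log L_n|$ asymptotically negligible, so that $|\log L_n|\sim n\log(1/\rho)$. These two claims are incompatible: if $|\log L_n|\sim n\log(1/\rho)$ for all $n$ and $|\log M_n(y)|\sim nH(q)$, then $\nu$ is exact-dimensional with dimension $H(q)/\log(1/\rho)>0$. In fact, with $\mathcal A=\{2^k\}$ a short recursion on $a_k:=|\log L_{2^k}|$ shows that for $\delta<2$ one gets $\underline{\dim}\,\nu=H(q)/(\delta c)>0$ for an explicit $c$, while for $\delta\ge 2$ one gets $\underline{\dim}\,\nu=\overline{\dim}\,\nu=0$. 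In neither regime do you obtain $\underline{\dim}\,\nu=0$ together with a positive tunable $\overline{\dim}\,\nu$. To achieve both, one must let $\mathcal A$ occur in growing \emph{blocks} of consecutive annulus steps (so that $\delta^{m_k}$ with $m_k\to\infty$ drives the lower dimension to~$0$), separated by sufficiently long stretches of regular steps to restore $|\log L_n|\asymp n$ along a subsequence. Without this, your assembly step---which needs $\underline{\dim}\,\nu=0$ when $\underline d$ is small (in particular when $\underline d=0$)---cannot reach all prescribed pairs $(\underline d,\overline d)$. The synchronisation issue you flag in point~(ii) is real as well, and is not resolved by what you wrote.
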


%%%%%%%%%%%%%%%%%%%%%%%%%

Although we do not explicitly state it, an adaptation of their proofs show  that  Theorems \ref{mainth2}  and \ref{mainth3}
 remain true when the frontiers of the annuli   $ A (x,r,\delta) $ in the given metric are  finite unions of convex parts of hyperplanes, for instance  in the case
$||.||=||.||_{1}=\sum_{i=1}^{d}|x_{i}|$.

\medskip

While the proof of Theorem \ref{mainth1} deals with all measures satisfying its assumptions, the proofs of Theorems \ref{mainth2} and \ref{mainth3} are constructive (they are both  based on  the same arguments): we explicitly build measures such that \eqref{eqmu} or \eqref{conclusion3} are true.

\medskip

Coming back to Theorem \ref{mainth1}, it is striking that when the Euclidean metric is used,  the uniform bound for  $\delta$ can  be improved, in the sense that $\mu(E_\mu(\delta,\eta))=0$ even for $\delta $ smaller than  $ \frac{\overline{d}-(d-1)}{\underline{d}-(d-1)  }$.  Next theorem illustrates this fact   when $d=2$, even when $\underline{d}<d-1=1$.

%%%%%%%%%%%%%%%%%%%%%%%%%
\begin{theorem}\label{mainth4}
 Suppose that $d=2$ and that the Euclidean metric $||.||_{2}$ is used.
 
 Let  $\overline{d}  \in [0.89,  2]$ and    $\underline{d}\in [0.89,  \overline{d}]$.  Suppose that $\mmm$ is a Radon probability measure  such that ${\underline{\mathrm{dim}}\, } \mmm=\underline{d}$ and ${\overline{\mathrm {dim}}\, } \mmm=\overline{d} $. Then 
 $\mmm(E_\mu(30,\eta))=0$
 for any $\hhh\in (0,1).$

\end{theorem}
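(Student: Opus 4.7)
The plan is to exploit the curvature of the Euclidean annulus to obtain a sharper upper bound on $\mu(A(x, r, 30))$ than is available for general metrics---this is the key geometric feature that is absent in the setting of Theorem~\ref{mainth1}.

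First, fix a small $\epsilon>0$ and restrict to a full $\mu$-measure set $G$ of points $x$ for which the Frostman-type bounds $\mu(B(x,\rho))\ge \rho^{\overline{d}+\epsilon}$ and $\mu(B(y,\rho))\le \rho^{\underline{d}-\epsilon}$ (for $\mu$-a.e.\ $y$) hold for all $\rho\le \rho_x$. The goal is to show that for $x\in G$, one has $\mu(A(x,r,30))=o(r^{\overline{d}+\epsilon})$ as $r\to 0$, which will imply $x\notin E_\mu(30,\eta)$.

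The key geometric input is the chord--sagitta relation in the plane: a chord of length $L$ in a circle of radius $r$ has sagitta $L^2/(8r)$. Hence the Euclidean annulus $A(x,r,30)$ of width $w=r^{30}$ decomposes into roughly $N\sim r/\sqrt{rw}=r^{(1-30)/2}=r^{-14.5}$ angular sectors, each contained in a chord tube of tangential length $\sim r^{15.5}$ and radial thickness $\sim r^{30}$. This is the improvement Euclidean geometry gives over the $\|\cdot\|_\infty$ setting, where the same annulus would be a disjoint union of only four large flat strips of length $\sim r$. I would then estimate each tube's $\mu$-mass by projecting $\mu$ onto the radial direction from $x$: the tube projects onto an interval of length $\sim r^{30}$, and a Marstrand-type projection theorem (valid essentially for $\underline{\dim}(\mu)>1/2$) gives that for Lebesgue-a.e.\ direction $\theta$ the pushforward $(\pi_\theta)_*\mu$ has dimension $\min(\underline{d},1)$, leading to a per-tube bound of order $r^{30(\underline{d}-\epsilon)}$. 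Summing over the $N\sim r^{-14.5}$ tubes,
$$
\mu(A(x,r,30))\ \lesssim\ r^{-14.5+30(\underline{d}-\epsilon)},
$$
whose exponent, for $\underline{d}\ge 0.89$, is at least $\approx 12.2-O(\epsilon)$, far greater than $\overline{d}+\epsilon\le 2+\epsilon$. Thus the ratio $\mu(A(x,r,30))/\mu(B(x,r))$ tends to $0$ and, in particular, drops below $\eta$ for all small $r$.

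The main obstacle is that Marstrand's theorem controls the projection only for Lebesgue-a.e.\ direction, whereas the covering requires simultaneous uniform control at $\sim r^{-14.5}$ angularly distributed directions, and at interval positions that need not correspond to $\mu$-typical base points. This is expected to be handled either by (i) Kaufman/Orponen-type uniform exceptional-set estimates for planar projections, or (ii) an integrated version, in which one averages the inequality against $d\mu(x)$ and applies Plancherel together with Wolff's spherical-average estimate $\int_{S^1}|\widehat{\mu}(R\omega)|^2\,d\omega\lesssim R^{-(4s-1)/3}$ for measures with $I_s(\mu)<\infty$ and $s\in(1/2,1)$, followed by Markov's inequality and a Borel--Cantelli argument along dyadic radii. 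The specific numerical thresholds $\underline{d}\ge 0.89$ and $\delta=30$ of the statement then emerge from optimizing these Fourier/projection estimates against the comparison bound $r^{\overline{d}}$.
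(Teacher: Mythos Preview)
Your strategy differs fundamentally from the paper's, and the gaps you flag are real and not easily closed along the lines you suggest.

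The paper does \emph{not} attempt to bound $\mu(A(x,r,30))$ for a single $x$. Instead it argues by contradiction: assuming $\mu(E_\mu(30,\eta))>0$, one finds a ball $B(x^*,2^{-n_0}/100)$ containing $\gtrsim 2^{2.1n_0}$ points $y_j$ that are pairwise $2^{-5n_0}$-separated and for which $\mu(A(y_j,r_{y_j},30))\gtrsim 2^{-2n_0}/n_0^2$. The key geometric lemma (Lemma~\ref{lemmcor}) is an \emph{intersection} estimate: for two such annuli, $A(y_i,r_{y_i},30)\cap A(y_j,r_{y_j},30)$ has diameter at most $24\cdot 2^{-13.5n_0}$. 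Combined with the Frostman upper bound $\mu(B(\cdot,2^{-30n_0}))\le 2^{-26.4n_0}$, this gives $\mu(A_i\cap A_j)\lesssim 2^{-9.4n_0}$, and a Bonferroni/inclusion--exclusion lower bound on $\mu(\bigcup_j A_j)$ then exceeds~$1$. The curvature enters through how two \emph{different} annuli meet, not through a tube decomposition of one annulus.

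Your primary approach has a gap beyond the one you identify. The step ``$(\pi_\theta)_*\mu$ has dimension $\min(\underline d,1)$, leading to a per-tube bound of order $r^{30(\underline d-\epsilon)}$'' is unjustified: Marstrand gives the \emph{dimension} of the projected measure, not a uniform Frostman upper bound $(\pi_\theta)_*\mu(I)\lesssim |I|^{\underline d-\epsilon}$ on intervals. For $\underline d<1$ there is no reason the projected measure should satisfy such a bound, even for a.e.\ $\theta$, and certainly not at the specific interval (distance $\approx r$ from $\pi_\theta x$) that you need. Without this, the per-tube estimate collapses to the trivial covering bound, which is far too weak.

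Your fallback (ii) via Wolff's circular-average decay and Borel--Cantelli is precisely the approach analysed in Section~\ref{*secdist}. As shown there, the resulting bound on $\mu(\{x:P_\mu(x,r,\delta,\eta)\text{ holds}\})$ decays only like $r^{t-1/2}$, and after discretising radii in $[2^{-k-1},2^{-k}]$ one picks up a factor $2^{3k}$, so the sum over $k$ diverges and Borel--Cantelli does not apply. The numerical thresholds $0.89$ and $30$ in the theorem do \emph{not} arise from optimising Fourier estimates; they come from the annulus-intersection bound $D^*_{corr,n}\lesssim 2^{-13.5n}$ of Lemma~\ref{lemmcor} together with the counting in Lemma~\ref{lem_minor}.
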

%%%%%%%%%%%%%%%%%%%%%%%%%

In the above Theorem \ref{mainth4}, taking $\underline{d}=1.01$ and $\overline{d}=1.99$, one sees that 
$$\frac{\overline{d} -(d-1)}{\underline{d} -(d-1)}=\frac{1.99-1}{1.01-1}=99.$$
By Theorem \ref{mainth2}, one might expect the existence of a probability measure $\mmm$ for which $\mmm(E_\mu(99,\eta))=1$.
Since $E_\mu(30,\eta)\supset E_\mu(99,\eta)$ the result of Theorem \ref{mainth4} goes well beyond
the bound in \eqref{eqmu} and shows that in the Euclidean metric, annuli are sufficiently
``independent/decorrelated" so that  Theorem \ref{mainth1} can be sharpened significantly -observe that Theorem \ref{mainth4} holds for all measures satisfying its assumptions.

The heuristic intuition explaining the difference between  Theorems \ref{mainth3}  and \ref{mainth4} is that when an annulus  with a cubic  shape centered at a point $x$ is translated by a very small distance,
%parallel to a coordinate axis,
 a large part of the translated  annulus is still contained in a cubic annulus centered at $x$ with comparable sidelength. But this does not hold true anymore for annuli with spherical shape. 
 %This  was already observed by many authors, in particular by Wolff in \cite{WolffKakeyasur}, who proved that thin spherical annuli, even when their centers are very close, do not intersect on large sets.
 More generally, it is standard that dealing with the Euclidean norm is often more complicated than with polyhedral norms in many dimensional problems (we   come back to this below). 
 %We apply one of Wolff's lemmas to get  Lemma \ref{lemmcor}  in Section \ref{proof4}, which is key to get Theorem \ref{mainth4}.
Our key tool to prove Theorem \ref{mainth4} is Lemma \ref{lemmcor}, which is an estimate of the size of intersecting annuli. This type of estimates were considered by many authors 
see, for example \cite{Boumax}, \cite{Marcirc} or especially Lemma 3.1 of \cite{WolffKakeyasur}. The order we obtain in Lemma \ref{lemmcor} is slightly better than
the ones available in the literature, and optimal as we remark in Section 
\ref{proof4}.

Also, it is striking that Theorem \ref{mainth4} deals also with lower and upper dimensions for $\mu$ that are less than $1=d-1$, emphasizing the difference between the $||.||_{\infty}$ metric  (and Theorem \ref{mainth2}) and the Euclidean metric.
 
 \medskip
 
The values 0.89 and 30 we obtain are not optimal, and obtaining  exact bounds in Theorem \ref{mainth4} for the Euclidean metric in dimension $d$  seems to be a challenging and interesting open problem.

 \begin{question}
 Suppose that the Euclidean metric is used in $\R^d$. For every $0 < \underline{d}\leq \overline{d}\leq d$, find the best  $1\leq \delta=\delta(\overline{d}, \underline{d}) $ such that for every probability measure $\mu$ supported inside $\zu^d$, for every  $\delta'>\delta$,  for every $\eta\in (0,1)$, $\mu\big(E_\mu(\delta',\eta)\big) =0$.
 \end{question}

Given our previous results, it is natural to conjecture the following:
\begin{conjecture}\label{*conject}
When $\underline{d} > d-1$, the optimal $\delta(\overline{d}, \underline{d}) $ is such that $\delta(\overline{d}, \underline{d}) <\frac{\overline{d}-(d-1)}{\underline{d}-(d-1)  }$.
\end{conjecture}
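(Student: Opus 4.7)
The plan is to extend the approach behind Theorem \ref{mainth4} from $d=2$ to arbitrary dimension $d$, while simultaneously replacing the restrictive hypothesis $\underline{d}\geq 0.89$ by the natural condition $\underline{d}>d-1$. The guiding idea is that the $||.||_{\infty}$ threshold $\frac{\overline{d}-(d-1)}{\underline{d}-(d-1)}$ is attained in Theorem \ref{mainth2} precisely because axis-parallel cubic annuli nest coherently under small translations, a rigidity that round Euclidean annuli lack; quantifying this extra decorrelation through a sharp annulus-intersection bound should feed into a second-moment argument and push the optimal $\delta$ strictly below the $||.||_{\infty}$ threshold.

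Concretely, I would first establish a $d$-dimensional analogue of Lemma \ref{lemmcor}: a sharp estimate on the Lebesgue measure of $A(x,r,\delta)\cap A(y,s,\delta)$ for $s\asymp r$ and $|x-y|\in[r^{\delta},2r]$. Slicing $A(x,r,\delta)$ into concentric spheres of radii $\rho\in(r-r^{\delta},r)$ and estimating the $(d-1)$-measure of each slice's intersection with the shell around $y$ by an elementary spherical-cap computation, one expects to gain a transversality factor of the form $(r^{\delta}/|x-y|)^{1/2}$ in the generic regime, in the spirit of the Wolff-type circular bounds cited before the statement of Theorem \ref{mainth4}. Then, assuming for contradiction that $\mu(E_{\mu}(\delta,\eta))>0$ for some $\delta<\frac{\overline{d}-(d-1)}{\underline{d}-(d-1)}$, I would pick a compact subset $F\subset E_{\mu}(\delta,\eta)$ of positive $\mu$-mass on which both $\mu(B(x,r))\leq r^{\underline{d}-\epsilon}$ and $\mu(B(x,r))\geq r^{\overline{d}+\epsilon}$ hold uniformly for $r\leq r_{0}$, pigeon-hole a common witnessing scale $r_{n}$ along which $P_{\mu}(x,r_{n},\delta,\eta)$ holds for many points of $F$, and analyse the double integral $\int_{F}\mu(A(x,r_{n},\delta))\,d\mu(x)$ by Fubini combined with the intersection lemma. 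Matching the resulting upper bound against the pointwise lower bound $\eta\,r_{n}^{\overline{d}+\epsilon}$ would yield a contradiction provided $\delta$ satisfies an algebraic inequality whose critical threshold turns out to be strictly less than $\frac{\overline{d}-(d-1)}{\underline{d}-(d-1)}$.

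The hard part will be controlling the near-tangent regime of the intersection lemma: two Euclidean spheres whose centers lie within $O(r^{\delta})$ of $|r\pm s|$ can share a spherical cap of transverse size $r^{\delta/2}$, which is exactly the pathology responsible for the suboptimal constants $0.89$ and $30$ in Theorem \ref{mainth4}. Matching the optimal exponent uniformly in the full range $d-1<\underline{d}\leq\overline{d}\leq d$, and especially in the borderline regime $\underline{d}\to(d-1)^{+}$ where the $||.||_{\infty}$ bound blows up, will likely require a sharper Wolff-type input or an independent combinatorial treatment of tangential configurations. A complementary difficulty is to rule out measures constructed in the spirit of Theorem \ref{mainth2} but supported on families of nearly-tangent Euclidean spheres, which could a priori force $\delta(\overline{d},\underline{d})$ back up toward the $||.||_{\infty}$ value; obstructing such constructions appears to be the actual content of the conjecture, and the main reason it is currently only a conjecture rather than a theorem.
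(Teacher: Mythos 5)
The statement you were asked to prove is labeled a \emph{conjecture} in the paper; the authors offer no proof of it, and indeed the whole point of stating it as a conjecture is that the techniques developed in the paper (Theorem \ref{mainth4} and its intersection lemma, Lemma \ref{lemmcor}) do not suffice to establish it. So there is no ``paper's own proof'' to compare against, and your proposal does not supply one either: what you have written is a research plan, not a proof. You yourself concede this in your final sentence, acknowledging that obstructing the near-tangent configurations ``appears to be the actual content of the conjecture, and the main reason it is currently only a conjecture rather than a theorem.''

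That said, the plan you sketch is aligned with what the paper's arguments suggest would be needed: a $d$-dimensional, quantitatively sharper analogue of Lemma \ref{lemmcor} feeding into a second-moment / double-counting estimate, exactly in the spirit of the Wolff-type circle-intersection bounds the paper cites. You also correctly identify the two genuine obstructions: (i) the near-tangent regime, where two Euclidean spheres with $|r_1-r_2|\approx|z_1-z_2|$ can share a cap of transverse size only $r^{\delta/2}$ rather than $r^{\delta}$, which is precisely what produces the suboptimal constants $0.89$ and $30$ in Theorem \ref{mainth4}; and (ii) the borderline regime $\underline{d}\to (d-1)^{+}$, where the $\|\cdot\|_{\infty}$ threshold $\frac{\overline{d}-(d-1)}{\underline{d}-(d-1)}$ blows up while the paper's Euclidean bound stays fixed at $30$, leaving a large gap whose closure would require genuinely new input. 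But identifying these obstructions is not the same as overcoming them. No concrete intersection lemma is proved, no second-moment computation is carried out, and the claimed ``algebraic inequality whose critical threshold turns out to be strictly less than $\frac{\overline{d}-(d-1)}{\underline{d}-(d-1)}$'' is never exhibited, let alone verified uniformly over the stated range of $(\underline{d},\overline{d})$. As a piece of expository analysis of why the conjecture is plausible and why it is hard, your text is reasonable; as a proof it is not, and you should be explicit from the outset that you are describing a programme rather than an argument.
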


{\bf Application to dynamical systems.} 
%We recall some notation from dynamical systems.
Suppose that $(T, X, \mu , \varrho )$
is a metric measure preserving dynamical system, that is $(X, \varrho )$ is a
metric space and $T : X \to  X$ is a Borel measurable map preserving a Borel probability
measure $\mu$  on $X.$ Given a ball $B(x,r)$ and $y\in X$,
$$\tau_{B(x,r)} (y) := \min\{n  \geq  1 : T^n(y) \in  B(x,r)\},$$
 is the first entry time of $y$ to $B(x,r)$. When $y\in B(x,r)$,  it is called the first return
time of $y$ to $B(x,r)$.

The entry and return times $\tau_{B(x,r)} (y) $ are studied in \cite{Urbanski-annuli}, for Weakly Markov systems $(T, X, \mu , \varrho )$  (we refer to \cite{Urbanski-annuli} for precise definitions). From  Theorems \ref{mainth1} and \ref{mainth4}, 
it follows that for certain measures the {\it Full Thin Annuli Property}  from
\cite{Urbanski-annuli} is satisfied. 
This way, based on Theorem B of \cite{Urbanski-annuli} one can state the following theorem.

\begin{theorem}\label{*thurbi}
 Let $(T, X, \mu , \varrho )$ be a Weakly Markov system, with $X\sse \R^{d}$. 
If one of the following two conditions is satisfied:
\begin{enumerate}
\item  $\underline{\dim}(\mu) > d-1$ and the metric used is any of the equivalent metrics used in $\R^{d}$;
\item $d=2$, the metric is the Euclidean and $\underline{\dim}(\mu) \geq  0.89$;
\end{enumerate}
then
 the distributions of the normalized first entry time and first return time converge
to the exponential one law, that is
\begin{equation}\label{*urb11}
\lim_{r\to 0}\sup_{t>0}\Big |  \mu\Big( \Big \{   y\in X:\tau_{B(x,r)}(y)>\frac{t}{\mu(B(x,r))} \Big  \} \Big )-e^{-t}   \Big |=0
\end{equation}
and
\begin{equation}\label{*urb12}
\lim_{r\to 0}\sup_{t>0}\Big |  \frac{1}{\mu(B(x,r))}\mu\Big( \Big \{   y\in B(x,r):\tau_{B(x,r)}(y)>\frac{t}{\mu(B(x,r))} \Big  \} \Big )-e^{-t}   \Big |=0.
\end{equation}
\end{theorem}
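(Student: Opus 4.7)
The strategy is to reduce the statement to Theorem B of \cite{Urbanski-annuli}, which asserts that for a Weakly Markov system whose invariant measure satisfies the Full Thin Annuli Property (FTAP), the normalized first entry and return times converge to the standard exponential distribution, i.e.\ \eqref{*urb11} and \eqref{*urb12} hold. Hence the whole proof consists of verifying FTAP for $\mu$ under either hypothesis (1) or (2). The key observation already stated in the introduction is that the complement of our sets $E_{\mu}(\delta,\eta)$ corresponds to a stronger condition than FTAP: if for some fixed $\delta\geq 1$ and every $\eta\in (0,1)$ one has $\mu(E_{\mu}(\delta,\eta))=0$, then for $\mu$-a.e.\ $x$ and every $\eta>0$ there exists $r_{x,\eta}>0$ such that $\mu(A(x,r,\delta))<\eta\cdot \mu(B(x,r))$ for all $r<r_{x,\eta}$, which is precisely (a strengthening of) FTAP as formulated in \cite{Urbanski-annuli}.

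Under hypothesis (1), $\underline{\dim}(\mu)>d-1$, so one can pick any finite $\delta$ strictly greater than $(\overline{\dim}(\mu)-(d-1))/(\underline{\dim}(\mu)-(d-1))$ (taking $\delta$ arbitrary if $\overline{\dim}(\mu)=+\infty$ is ruled out since $\overline{\dim}(\mu)\leq d$). Theorem \ref{mainth1} then yields $\mu(E_{\mu}(\delta,\eta))=0$ for every $\eta\in(0,1]$, and the above translation gives FTAP. Under hypothesis (2), $d=2$, the Euclidean norm is used, and $\underline{\dim}(\mu)\geq 0.89$; since $\overline{\dim}(\mu)\geq \underline{\dim}(\mu)\geq 0.89$ and $\overline{\dim}(\mu)\leq d=2$, Theorem \ref{mainth4} applies and gives $\mu(E_{\mu}(30,\eta))=0$ for every $\eta\in(0,1)$, hence again FTAP for $\mu$.

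With FTAP established in both cases, Theorem B of \cite{Urbanski-annuli} directly yields the convergence statements \eqref{*urb11} and \eqref{*urb12}, which completes the proof. The main difficulty is not in this application step itself but is concentrated in the two dimensional/geometric results (Theorems \ref{mainth1} and \ref{mainth4}) that were proven earlier; the contribution of the present theorem is to recognize that their conclusions, phrased in terms of the sets $E_{\mu}(\delta,\eta)$, are exactly what is required to trigger the dynamical consequences of \cite{Urbanski-annuli}. A minor care point is to match the precise formulation of FTAP in \cite{Urbanski-annuli} (whether the exponent $\delta$ is required to be independent of $r$, and whether $\eta$ must be chosen uniformly or may depend on $x$) with the pointwise $\mu$-a.e.\ statement produced by Theorems \ref{mainth1} and \ref{mainth4}; this is essentially a verification that our "strong" negation of $E_\mu(\delta,\eta)$ entails the version of FTAP used as input to Theorem B of \cite{Urbanski-annuli}.
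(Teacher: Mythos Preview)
Your proof is correct and follows exactly the approach indicated in the paper: the theorem is stated there as a direct consequence of Theorems \ref{mainth1} and \ref{mainth4} (which guarantee the Full Thin Annuli Property under hypotheses (1) and (2) respectively) combined with Theorem B of \cite{Urbanski-annuli}. The paper gives no separate formal proof beyond this reduction, so your write-up is in fact more detailed than the paper's own treatment.
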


In \cite{Urbanski-annuli}, the conclusions are true for every Weakly Markov system $(T, X, \mu , \varrho )$, with the restriction that  the limits   in  \eqref{*urb11} and \eqref{*urb12}  are taken only on a subsequence of radii (more precisely, the limit is $\lim_{r\to 0, \, r\in R_{x}}$,
where $R_{x}$ is a $\beta$-thick class of radii - for the exact details see \cite{Urbanski-annuli}).  Our main improvement is to show that the limit holds true {\em for every measure} provided that one of the two conditions is satisfied.  

\medskip

Let us mention that the problems concerning intersecting annuli in
Section \ref{proof4} are reminiscent to questions arising when studying for instance Falconer's distance set conjecture  (for which recently many striking results were recently obtained \cite{Fadist,falc-inter-distance,Iosevitch-Laba,GuthFad,MattilaFou}) and the (circular) Kakeya problem \cite{Keleti-union-circles,Keleti-squares-center}. 
Distribution of measures on annuli plays an important role in these problems as well, mainly through the study of  cubic or spherical averages, and it is a standard issue  that the choice of the norm influences the results.  In Section \ref{*secdist} we   discuss this in more detail, in particular, based on standard arguments \cite{Fadist,MattilaFou,Taomeas} in Fourier and potential theory, the following proposition is proved.
 
 \begin{proposition}\label{*anonym}
Let $t > 1/2,$ and let $\mu$ be a finite $t$-regular measure on $\R^2$, i.e.  a Radon measure  satisfying 
\begin{equation}\label{*treg}
c_{t} r^{t}\leq \mmm(B(x,r))\leq C_{t}r^{t}, \ \ax x\in\spt(\mmm)\text{ and }
0<r<\diam(\spt \mmm).
\end{equation}
Assume that $\mmm$ has compact support.

Then,
for $\delta=4$, for every $\eta > 0$
\begin{equation}\label{*eqanonym}
\lim_{r\to 0} \mu(\{x\in \R^2 : P_{\mu }(x,r,4,\eta) \mbox{  holds} )\})=0.
\end{equation}
\end{proposition}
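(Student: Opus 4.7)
The plan is to combine a Chebyshev reduction with a Fourier-analytic estimate in the spirit of Mattila's approach to the Falconer distance problem. Set $E_r := \{x \in \R^2 : P_\mu(x, r, 4, \eta) \text{ holds}\}$. The lower $t$-regularity bound $\mu(B(x, r)) \geq c_t r^t$ forces any $x \in E_r$ to satisfy $\mu(A(x, r, 4)) \geq c_t \eta\, r^t$, so Chebyshev's inequality yields
\[
\mu(E_r) \;\leq\; \frac{1}{c_t \eta\, r^t} \int \mu(A(x, r, 4))\, d\mu(x) \;=\; \frac{1}{c_t \eta\, r^t}\iint \chi_{[r - r^4,\, r]}(|x - y|)\, d\mu(x)\, d\mu(y),
\]
reducing the claim to showing that the double integral is $o(r^t)$ as $r \to 0$.

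To bound that integral, dominate $\chi_{[r - r^4, r]}$ from above by a smooth bump $\psi_r$ supported in $[r - 2r^4, r + r^4]$, equal to $1$ on $[r - r^4, r]$, and satisfying $|\psi_r^{(k)}| \lesssim_k r^{-4k}$. Setting $F_r(z) := \psi_r(|z|)$, the identity $\widehat{\mu \ast \tilde\mu} = |\widehat\mu|^2$ combined with Parseval duality gives
\[
\iint F_r(x - y)\, d\mu(x)\, d\mu(y) \;=\; \int_{\R^2} \widehat{F_r}(\xi)\, |\widehat\mu(\xi)|^2\, d\xi.
\]
A Fourier--Bessel computation produces three regimes for the radial function $\widehat{F_r}$ at $\rho = |\xi|$: the trivial $L^1$ bound $|\widehat{F_r}(\rho)| \lesssim r^5$ (matching the area of $A$), the Bessel-decay bound $|\widehat{F_r}(\rho)| \lesssim r^{9/2} \rho^{-1/2}$ valid for $\rho \gtrsim 1/r$ (coming from $|J_0(t)| \lesssim t^{-1/2}$), and rapid decay $|\widehat{F_r}(\rho)| \lesssim_N r^{9/2} \rho^{-1/2} (\rho r^4)^{-N}$ for $\rho \gtrsim r^{-4}$, obtained by $N$ integrations by parts exploiting the smoothness of $\psi_r$.

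The last ingredient is Mattila's spherical-average estimate. Since $\mu$ is $t$-regular with $t > 1/2$, the energy $I_s(\mu)$ is finite for every $s < t$, and Mattila's theorem gives
\[
\Sigma_\mu(\rho) \;:=\; \int_{S^1} |\widehat\mu(\rho\omega)|^2\, d\omega \;\lesssim\; \rho^{-s} \quad \text{for every } s \in (1/2, \min(t, 1)),
\]
together with the bound $\Sigma_\mu(\rho) \lesssim \rho^{-1}$ when $t > 1$. Passing to polar coordinates in the Parseval identity and splitting the $\rho$-integral into $\rho \leq 1/r$, $1/r \leq \rho \leq r^{-4}$ and $\rho \geq r^{-4}$, the middle regime dominates and contributes $O(r^{4s - 3/2})$, while the other two are controlled by the $L^1$ bound and the rapid-decay bound respectively. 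The decisive algebraic check is that the interval $\bigl((t + 3/2)/4,\, \min(t, 1)\bigr)$ for $s$ is nonempty exactly when $t > 1/2$; picking $s$ inside it gives $4s - 3/2 > t$, so the double integral is $O(r^{t + \alpha})$ for some $\alpha > 0$ and hence $\mu(E_r) \to 0$.

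The main obstacle is the bookkeeping that patches together the three Fourier--Bessel regimes for $\widehat{F_r}$ and verifies that the value $\delta = 4$ is exactly calibrated so that the admissible window for $s$ stays nonempty over the whole range $t > 1/2$. In particular, controlling the tail $\rho \geq r^{-4}$ requires choosing $N$ large enough that this piece is negligible compared to $r^t$, and one must be careful near the boundary $t \downarrow 1/2$, where the window for $s$ collapses and the method degenerates.
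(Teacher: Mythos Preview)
Your overall strategy --- Chebyshev reduction, Parseval, and bounding the resulting Fourier integral via energy --- is exactly the paper's, and your arithmetic with the critical exponent $4s - 3/2$ is correct. There is, however, a gap at the spherical-average step. The pointwise bound $\Sigma_\mu(\rho) \lesssim \rho^{-s}$ for $s \in (1/2, \min(t,1))$ that you attribute to Mattila is not a standard consequence of $t$-regularity or of energy finiteness: the elementary argument via $|J_0(u)| \lesssim u^{-1/2}$ yields only $\Sigma_\mu(\rho) \lesssim \rho^{-1/2} I_{1/2}(\mu)$, and plugging $s = 1/2$ into your middle-regime bound gives $r^{1/2}$, which after dividing by $r^t$ does not tend to zero. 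Improving the pointwise decay of $\Sigma_\mu$ beyond $\rho^{-1/2}$ is precisely the deep analytic core of the Falconer distance problem, and no simple lemma of Mattila delivers it.

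The fix is immediate and collapses your argument into the paper's: bypass spherical averages and use the energy integral directly. In the regime $|\xi| \leq r^{-4}$, write $|\xi|^{-1/2} = |\xi|^{s-2}\,|\xi|^{3/2-s}$ for any $1/2 < s < 3/2$; since $|\xi|^{3/2-s} \leq r^{-4(3/2-s)}$, one gets
\[
r^{9/2} \int_{|\xi|\leq r^{-4}} |\xi|^{-1/2}\,|\widehat\mu(\xi)|^2\,d\xi
\;\leq\; r^{9/2}\,r^{4s-6}\,I_s(\mu)
\;=\; r^{4s-3/2}\,I_s(\mu),
\]
and the tail $|\xi| > r^{-4}$ is handled the same way via $|\xi|^{-3/2} = |\xi|^{s-2}\,|\xi|^{1/2-s}$. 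This requires only $I_s(\mu) < \infty$ for $s < t$, which is immediate from $t$-regularity. The paper does exactly this, quoting Falconer's Lemma~2.1 for the Fourier bound on the indicator $\chi = \mathbf{1}_{[r-r^4,r]}(|\cdot|)$ (so your smoothing step is also unnecessary), and making the explicit choice $s = (t+1/2)/2$.
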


Unfortunately, this convergence in measure, or similar arguments, do not help improving our bounds (in fact, as far as we checked they do not even yield Theorems \ref{mainth1} to \ref{mainth4}). But it is quite interesting that similar issues arise in both problems.

The paper is organized as follows.

In Section \ref{proof1}, Theorem \ref{mainth1} is proved. The proof is  natural and  quite short, based on  the Radon measure version of Lebesgue's density theorem,
Corollary \ref{*Ma214}.

In Section \ref{proof2}, we explicitly build a measure $\mu$ such that $\mu(E_\mu(\delta,\eta))=1$, for any choice of $\delta$ and $\eta$. The construction is based on two subdivision schemes {\bf A} and {\bf B} that allow to   spread the mass of a cube on its boundaries in a controlled manner.

In Section \ref{proof3}, we show that the construction of Section \ref{proof2} can be adapted to prove Theorem \ref{mainth3}.

In Section \ref{proof4}, the Euclidean case (and Theorem \ref{mainth4}) is studied.

Finally,  in Section \ref{*secdist} we prove Proposition \ref{*anonym} and explain why such arguments, though interesting,  are for the moment not strong enough to reach Theorems \ref{mainth1} to \ref{mainth4}.

\section{Proof of Theorem \ref{mainth1}}
\label{proof1}

  %For ease of notation   put    $\du =\overline{\dim }(\mu)$ and  $\dl =\underline{\dim }(\mu)$. 

Before starting the proof we recall with slight change of notation part (1) of
2.14 Corollary from \cite{MATTILA}.

\begin{corollary}\label{*Ma214}
Suppose that $\mu$ is a Radon measure on $\R^{n}$ and $E\sse \R^n$ is $\mu$ measurable. Then the limit
$$\lim_{r\searrow 0}\frac{\mu(E\cap B(x,r))}{\mu(B(x,r))}$$
exists and equals $1$ for $\mu$-almost all $x\in E$ and equals $0$
for $\mu$-almost all $x\in \R^{n}\sm E$.
\end{corollary}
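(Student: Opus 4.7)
The statement is the classical Lebesgue differentiation theorem for Radon measures with respect to a $\mu$-measurable set. My plan is to derive it from the Besicovitch covering theorem, which is the natural tool in $\R^n$ for Radon measures since, unlike the Vitali-type approach, it requires no doubling condition on $\mu$.

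First I would reduce to proving the first half, namely that the limit equals $1$ for $\mu$-a.e. $x \in E$. The second assertion (limit $=0$ for $\mu$-a.e. $x \in \R^n \setminus E$) then follows by applying the first half to the $\mu$-measurable set $E' := \R^n \setminus E$, using that a Radon measure on $\R^n$ is $\sigma$-finite so one may split $\R^n$ into countably many sets of finite $\mu$-measure and treat each piece separately (with outer regularity at hand).

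For the main claim, for each rational $\alpha \in (0,1)$ I would introduce the bad set
\begin{equation*}
E_\alpha := \Big\{ x \in E : \liminf_{r \searrow 0} \frac{\mu(E \cap B(x,r))}{\mu(B(x,r))} < \alpha \Big\}
\end{equation*}
and show $\mu(E_\alpha) = 0$. Fix $\varepsilon > 0$ and, by outer regularity of the Radon measure $\mu$, choose an open $U \supset E_\alpha$ with $\mu(U) \leq \mu(E_\alpha) + \varepsilon$. By definition of $E_\alpha$, each $x \in E_\alpha$ is the center of arbitrarily small closed balls $B(x,r) \subset U$ satisfying $\mu(E \cap B(x,r)) < \alpha\, \mu(B(x,r))$; this constitutes a fine Vitali-style cover of $E_\alpha$. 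The Besicovitch covering theorem yields a countable pairwise disjoint subfamily $\{B_i\}_{i \geq 1}$ with $\mu\bigl(E_\alpha \setminus \bigcup_i B_i\bigr) = 0$. Summing and using disjointness,
\begin{equation*}
\mu(E_\alpha) \; = \; \mu\Bigl(E_\alpha \cap \bigcup_i B_i\Bigr) \; \leq \; \sum_i \mu(E \cap B_i) \; \leq \; \alpha \sum_i \mu(B_i) \; \leq \; \alpha\, \mu(U) \; \leq \; \alpha\bigl(\mu(E_\alpha) + \varepsilon\bigr).
\end{equation*}
Letting $\varepsilon \to 0$ gives $(1-\alpha)\mu(E_\alpha) \leq 0$, hence $\mu(E_\alpha) = 0$. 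Taking the countable union over $\alpha_k \uparrow 1$ proves that the lower density is at least $1$ at $\mu$-a.e. point of $E$; since the upper density is trivially at most $1$, the limit exists and equals $1$ $\mu$-a.e. on $E$.

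The main obstacle is really the choice of covering theorem: for a general Radon $\mu$ one cannot invoke Vitali's theorem without a density assumption, so Besicovitch (which holds in $\R^n$ for any locally finite Borel measure, with a disjoint subfamily covering $\mu$-almost everything) is essential. Once one has the disjoint-cover statement, together with outer regularity and $\sigma$-finiteness of $\mu$ on $\R^n$, the rest of the proof is a one-line estimate as above, and the $\R^n \setminus E$ case is obtained from the $E$ case just by relabeling.
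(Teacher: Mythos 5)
Your proof is correct, but note that the paper does not actually prove this statement: it simply quotes it verbatim as Corollary~2.14(1) of Mattila's \emph{Geometry of Sets and Measures in Euclidean Spaces} and moves on. Your argument is the standard one and is, in spirit, the same route Mattila takes: he first proves the Besicovitch and Besicovitch--Vitali covering theorems and the general differentiation theorem $D(\lambda,\mu,x)$ for a pair of Radon measures, and then specializes to $\lambda=\mu\lfloor E$ to obtain the density statement; you simply carry out that specialization directly, replacing the Radon--Nikodym step by the $E_\alpha$-and-fine-cover estimate, which is exactly what Mattila's proof reduces to in this case. Two small expository points worth tightening. First, the $\sigma$-finiteness reduction you invoke only in the $\R^n\setminus E$ half is in fact already needed in the first half: to ensure that outer regularity supplies an open $U\supset E_\alpha$ with $\mu(U)<\mu^*(E_\alpha)+\varepsilon$ (and with both sides finite), you should from the start restrict everything to a fixed ball $B(0,N)$ and intersect $E$ with it, then let $N\to\infty$. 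Second, $E_\alpha$ need not be $\mu$-measurable \emph{a priori}, so the estimate should run with $\mu^*$ throughout; this is harmless because the Besicovitch--Vitali covering theorem applies to arbitrary subsets and outer regularity is an outer-measure statement, so the chain $\mu^*(E_\alpha)\le\sum_i\mu(E\cap B_i)\le\alpha\mu(U)\le\alpha(\mu^*(E_\alpha)+\varepsilon)$ goes through unchanged. With these cosmetic adjustments, the argument is complete and matches the cited source.
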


\begin{proof}[Proof of Theorem \ref{mainth1}]
Fix $\delta>(\du-(d-1))/(\dl-(d-1))$ and  $\ep>0$ so small that 
\begin{equation}\label{*es}
(\dl-\ep+(d-1) )\delta > \du+\ep+(d-1) .
\end{equation}

Observe that  there exists a constant $C_d>0$, depending on $d$ and the chosen norm only, such that for any ball $B(x,r)$, the associated annulus   $A(x,r,\delta)$   can be covered by at most $C_d  r^{(d-1)(1-\delta)} $ smaller balls  $B$ of radius $r ^\delta$.

Also, choose $0<\eta<1$, and consider $ E_\mu(\delta,\eta)$.

Proceeding  towards   a   contradiction,   suppose   that $\mu(E_\mu(\delta,\eta))>0$.

Consider for every $r,\ep>0$ the set 
  \begin{equation}\label{*Bepr*}
D_{\ep,r} = \{x \in \R^d:   \forall \, 0<s\leq r, \  s^{\du+\ep}\leq  \mu(B(x,s)) \leq s^{\dl-\ep}\}.
 \end{equation}

By definition, for every $\ep>0$,  the set  $  E_\mu(\delta,\eta) \cap   \bigcup_{p\geq 1} D_{\ep,1/p}   $ has full $\mu$-measure in $E_\mu(\delta,\eta)$. This holds especially for $\eee$ fixed in \eqref{*es}.

We   put   $E= E_\mu(\delta,\eta) \cap  D_{\ep, 1/p} $ with 
  a choice of    a    sufficiently large  $p$ such that   $\mu(  E  ) \geq   \mu(E_\mu(\delta,\eta))/2>0$.

Finally, we choose $0<\gamma <1/p$ so small  that   for every $0\leq r\leq \gamma$,
\begin{equation}
\label{eq3}
r^{(\dl-\ep-(d-1))   \delta} \leq    \frac{\eta  2^{  \ep -\dl  }   }{2C_d  } \,  r^{ \du+\ep-(d-1)} . 
\end{equation}

%Consider  the sets of balls  $ \{B(x,r): x\in E, r\leq 1/p\}$.

For every $x \in E$, there exists $r_x>0$, such that   
\begin{equation}\label{*Best}
\text{  $r_x< \gamma/2$,  \ \ \ 
$r_x^{\du+\ep}\leq  \mu(B(x,r_x)) \leq r_x^{\dl-\ep}$}
\end{equation}
$$\text{and  \ \ $P_{\mu}(x,r_x,\delta,\eta)$ holds.}$$

By using Corollary \ref{*Ma214} we can also assume that for $\mu$-almost all $x\in E$, we have chosen $r_x$  so small that  
$${\mu(E\cap B(x,r_x))}> (1-\eta/10) {\mu(B(x,r_x))}$$
and hence
\begin{equation}\label{*densest}
\mmm(B(x,r_x)\sm E)< (\eta/10) {\mu(B(x,r_x))}.
\end{equation}

Let us write $B_x=B(x,r_x)$ and $A_{x}=A(x,r_x,\ddd)$ for every $x\in E$. Such a ball satisfies by \eqref{*Best}
\begin{equation}
\label{eq2}
r_x^{\du+\ep}\leq  \mu(B_x) \leq r_x^{\dl-\ep}
\end{equation}
together with  $P_{\mu}(x,r_x,\delta,\eta)$.

Since $\mmm(E)>0$, it is possible to select an $\mathbf{x}\in E$ for which \eqref{*densest} holds.

  Since   $P_{\mu}(\mathbf{x},r_\mathbf{x},\delta,\eta)$   is   satisfied, we have
\begin{equation}
\label{ineg2}
 \eta \cdot \mu( B_\mathbf{x} )  \leq \mu  ( A_\mathbf{x} ) .
 \end{equation}
 
In addition, by definition of $C_d$,  $A_\mathbf{x}$ is covered by at most $C_d (r_\mathbf{x})^{(d-1)(1-\delta)} $ 
many balls  $B$ of radius $r_{\mathbf{x} }^\delta$. For each of these balls $B$, either $\mu (E\cap B)=0$, or $\mu(E\cap B) >0  $ and in this case $E\cap B\subset B(y, 2 r_{\mathbf{x} }^\delta )$ for some $y\in E$. 
Observe that by $\delta\geq 1$ and \eqref{*Best}, we have $2 r_{\mathbf{x} }^\delta<2 r_{\mathbf{x} }<\ggg<1/p$.
  By \eqref{*Bepr*}   
\begin{align*}
\mu(E\cap B)  &  \leq \mu(B(y, 2 r_{\mathbf{x} }^\delta  )) \leq (2 r_{\mathbf{x} }^\delta )^{\dl-\ep}       .
\end{align*}
 Hence, summing over   the (at most $C_d  (r_\mathbf{x})^{(d-1)(1-\delta)}  $) balls that cover $A_\mathbf{x}$, we get  by \eqref{eq3} and \eqref{eq2}  that
\begin{align*}
  \mu(A_\mathbf{x}\cap E) & \leq C_d (r_\mathbf{x})^{(d-1)(1-\delta)} (2 r_{\mathbf{x} }^\delta )^{\dl-\ep}     \leq   \frac{\eta}{2}  r_\mathbf{x}^{\du+\ep}   \leq   \frac{\eta}{2} \mu(B_\mathbf{x}). 
\end{align*}
Since 
by \eqref{*densest} and \eqref{ineg2}
$$\mu(A_\mathbf{x}\cap E)\geq \mu(A_\mathbf{x})-\mu(B_{\mathbf{x}}\sm E)\geq 0.9\cdot \eta\cdot \mmm(B_{\mathbf{x}})$$
we obtain a contradiction with the previous equation.
  \end{proof}

%%%%%%%%%%%%%%%%%%%%%%%%%%%%%%%%%%
%%%%%%%%%%%%%%%%%%%%%%%%%%%%%%%%%%
%%%%%%%%%%%%%%%%%%%%%%%%%%%%%%%%%%
%%%%%%%%%%%%%%%%%%%%%%%%%%%%%%%%%%
%%%%%%%%%%%%%%%%%%%%%%%%%%%%%%%%%%
%%%%%%%%%%%%%%%%%%%%%%%%%%%%%%%%%%
%%%%%%%%%%%%%%%%%%%%%%%%%%%%%%%%%%
%%%%%%%%%%%%%%%%%%%%%%%%%%%%%%%%%%
%%%%%%%%%%%%%%%%%%%%%%%%%%%%%%%%%%
%%%%%%%%%%%%%%%%%%%%%%%%%%%%%%%%%%
%%%%%%%%%%%%%%%%%%%%%%%%%%%%%%%%%%
%%%%%%%%%%%%%%%%%%%%%%%%%%%%%%%%%%
\section{Proof of Theorem \ref{mainth2}}
\label{proof2}

 In this section we construct a Cantor-like measure $\mu$ which satisfies the assumptions of Theorem \ref{mainth2}. 
 The main idea is that the construction steps leading to $\mu$ are similar to the ones of a standard Cantor set and measure, except that for some exceptional steps where we   impose that some annuli   carry the essential weight of the mass.

\subsection{Preliminaries}

Fix $0<\eta<1$,  $d-1<    \underline{d}  <  \overline{d}  < d$, and  put 
\begin{equation}\label{*ab*aa}
\ddd=\frac{\overline{d}-(d-1)}{\underline{d}-(d-1)} >1, 
\end{equation}
where the equality  is equivalent to
\begin{equation}\label{*ab*a}
(d-1)(1-\ddd)+\ddd \underline{d}=\overline{d}.
\end{equation}
%The case where $\overline{d}=\underline{d}$ is treated apart in Section \ref{caslimite}??? by considering a sequence $\overline{d}_{k}\nearrow d$.

\medskip

We call   
$\cad_{n}$, $n=0,1,...$  the family of half-open dyadic cubes of side length $2^{-{n}}$,
that is cubes $Q=\prod_{i=1}^{d}[k_{i}\cdot 2^{-n},(k_{i}+1)2^{-n})$, $k_{i}\in \Z$,
 $i=1,...,d$. Observe that $Q$ contains exactly one of its vertices, namely the one with coordinates $(k_{1}\cdot 2^{-n},...,k_{d}\cdot 2^{-n})$. We call this vertex the 
 {\it smallest vertex} of $Q$ and denote it by ${\mathbf v}_{\min,Q}$.
The sum of the coordinates of ${\mathbf v}_{\min,Q}$ is denoted by $s({\mathbf v}_{\min,Q})$, that is 
\begin{equation}
\label{sumvertex}
s({\mathbf v}_{\min,Q})=k_{1}\cdot 2^{-n}+...+k_{d}\cdot 2^{-n}.
\end{equation}

\medskip

\begin{definition}
\label{def-boundary}
For every cube $Q\in \mathcal{D}_m$, for every $n>m$
denote by $\partial_{n } Q\sse \cad_{n}$ the set of {\it $n $-boundary cubes} of $Q$, that is
those $Q'\in \cad_{n } $ which are included in $Q$, but at least one of their neighbors
is not included in $Q$. We denote by $d_{n }(Q) $ the number of $\cad_{n } $ cubes in
$\partial_{n } Q$.  

Out of the $2d$ faces of $Q$, we call $\ddno Q$ the face consisting   of those cubes $Q'\in \dd_{n} Q$ for which the first coordinate of its smallest vertex is $k_{1} \cdot 2^{-n}$: $\ddno Q$ will be called  the {\it smallest face} of $Q$.
 We also put $\partial_{m,1} Q=\{Q\}$.
\end{definition}

We denote by $d_{n,1 }(Q) $ the number of $\cad_{n } $ cubes in
$\partial_{n,1 } Q$.
It is clear that 
\begin{equation}\label{*dmnestb}
d_{n,1 }(Q) =2^{(d-1)(n-m)}.
\end{equation}

%{I have the feeling here that the norm is not important: we could as well impose that the dyadic cubes are located on the border of a ball built using any norm (in particular, euclidian). What do you think?}

%An  estimate of $d_{n }(Q) $ is easy to get:   there is a constant
%$c'_{d}>1$, depending only on the dimension and not on $m$ nor $n $, such that we have
%\begin{equation}\label{*dmnest}
%\frac{1}{c'_{d}} 2^{(d-1) (n -m) }< d_{n }(Q) <
%{c'_{d}} 2^{(d-1) (n -m) }.
%\end{equation}

In the rest of this section, we construct a sequence of mass distributions $(\mu_{m})_{m \geq 1}$ which converges to a measure $\mu$ that will satisfy the assumptions of Theorem \ref{mainth2}.

We put $\mmm_{0}([0,1)^{d})=1,$ and for $Q\not =[0,1)^{d}$, $Q\in \cad_{0}$, 
we impose  that $\mmm_{0}(Q)=0.$

At the $m$th step, the mass distribution  $\mu_m$  will be defined  by fixing   the $\mu_m$-weight of every  cube $Q\in \cad_{m}  $, and this $\mu_m$-mass will be uniformly distributed inside every such $Q$. Then  $\mu_{m+1}$ will be a refinement of $\mu_m$ in the sense that 
\begin{equation}
\label{kolmo}
\mbox{for every $Q\in \cad_m$, $\mu_{m+1}(Q)=\mu_m(Q)$.}
\end{equation}
Due to Kolmogorov's extension theorem (see for example \cite{Oksendal}, \cite{Taomeas} or \cite{Lamperti})
  this ensures the weak convergence of $(\mu_m)$ to a measure $\mu$ defined on $\zu^d$.  

Set $\hhh^{*}=\sqrt \hhh>\hhh.$

Fix a constant $c_d>0$ so large  that 
%\begin{align}
%\label{cd1}
% c_{d} &\geq  2^{d+1} \max\Big(   \frac{10 d} {1-\eta} , 1 \Big)\\
% \label{cd2}
% \frac{ 1- \Big (  \frac{1-\eta}{10 d} \Big )  }{   c'_{d} } &\geq c_d^{-1} \ \ \mbox{ and } \ \   \Big (1- \Big (  \frac{1-\eta}{10 d} \Big )\Big ) c'_{d}c_{d}   \leq c_d^2.
% \end{align}
%\begin{align}
%\label{cd1}
 %c_{d} &\geq  2^{d+1} \\
% \label{cd2}
 %%\frac{ 1- \eta   }{   c'_{d} } &\geq c_d^{-1} , \ \ 1-\eta\geq c_d^{-2}\ \ \mbox{ and } \       \eta 2^d{{c'_{d}}}     c_{d}     \leq c_d^2.
% \end{align}
\begin{align}
\label{cd1}
 c_{d} &\geq  2^{10d+1},\quad  \hhh^{*}>c_{d}^{-1}\\
 \label{cd2}
 1-\hhh>{ 1- \eta^{*}   } &\geq c_d^{-1} ,  \ \mbox{ and } \ \      \eta ^{*}2^{d+1+\ddd}          \leq c_d.
 \end{align}

Denote by   $\cad_{m}^+$  the subset of $\cad_m$ containing those cubes $Q\in \cad_m$   for which the measure $\mu_m(Q)> 0$.
The sequence of measures   $(\mu_m)_{m\geq 1}$  will satisfy that     for some $C>1$,   $\mbox{for every $m\geq 1$,}$ $\text{ for all }Q\in\cad_{m}^+ $,
\begin{equation*}
C^{-1} 2^{-m\overline{d}}\leq \mu_{m}(Q) \leq C  2^{-m \underline{d}}.
\end{equation*}

\medskip

We are going to    alternate between  two subdivision schemes.  The  subdivision scheme of type {\bf A} is meant to distribute quite uniformly the mass of a cube into some of its subcubes, while  the subdivision scheme of type {\bf B} will concentrate the mass of $Q$ into a very thin "layer" close to the boundary near the smallest face  of $Q$ and around its center.

\subsection{Subdivision scheme of type A}
\label{*subsectypA} 
 
 Assuming that $\mu_m$ is defined on $\mathcal{D}_m$, this scheme {\bf A} is applied to one individual cube  $Q \in \cad_m$ to define a measure  $\mu_{m+1}$  on the subcubes $Q'\in \mathcal{D}_{m+1}$ included in $Q$.  
 This subdivision scheme {\bf A}  distinguishes three cases:

\begin{enumerate}
\medskip
\item[(A1)]  If $\mu_{m}(Q)=0$,  then for any $Q'\sse Q$ with  $Q'\in \cad_{m+1}$, we put 
$\mu_{m+1}(Q')=0.$ 

\medskip
\item[(A2)]  If $2^{-d}\mu_{m}(Q)\geq  2^{-(m+1) \overline{d}}$ then for any 
  $Q'\sse Q$ with  $Q'\in \cad_{m+1}$, we set 
$\mu_{m+1}(Q')=2^{-d}\mu_{m}(Q).$

\medskip
\item[(A3)] 
If $2^{-d}\mu_{m}(Q)<  2^{-(m+1) \overline{d}}$, then we concentrate all the mass on the subcube $Q'\in \mathcal{D}_{m+1}$ included in $Q$ whose smallest vertex is the same as that of $Q$. In other words,  $\mu_{m+1}(Q')=\mu_{m}(Q)$ and ${\mathbf v}_{\min,Q}={\mathbf v}_{\min,Q'}$. 

\noindent
For  all the other  cubes
  $Q''\sse Q$, $Q''\in \cad_{m+1}$, we put 
$\mu_{m+1}(Q'')=0.$
\end{enumerate}

It is clear that  with this process $\mu_{m+1}(Q)=\mu_m(Q)$, so $\mu_{m+1}$ is indeed a refinement of $\mu_m$ on $Q$.

Remark that (A2) tends to spread the mass of $Q$ uniformly on its subcubes (hence to make the local dimension increase since $\overline{d}<d$) while (A3) tends to concentrate the mass  (hence to make the local dimension decrease from generation $m$ to generation $m+1$).

\begin{lemma}\label{*clstepaa} 
Assume that $\mu_m$ satisfies 
\begin{equation}\label{*m1*a}
 c_{d}^{-2} 2^{-m\overline{d}}\leq \mu_{m}(Q) \leq c_{d}^2 2^{-m \underline{d}},
\end{equation}
with $Q\in \cad _{m}$, and apply  subdivision scheme {\bf A} to define $\mu_{m+1}$ on the subcubes  $Q'\sse Q$, $Q'\in \cad_{m+1}$.

Then, for every  $Q'\in \cad_{m+1}$ with $Q'\sse Q$, such that  $\mu_{m+1} (Q')\neq 0$,  \eqref{*m1*a} holds with  the measure $\mu_{m+1}$ and generation   $m+1$, i.e.
\begin{equation}
\label{eqm+1}
 c_{d}^{-2}  2^{-(m+1)\overline{d}}\leq \mu_{m+1}(Q') \leq c_{d}^2 2^{-(m+1) \underline{d}}.
\end{equation}
\end{lemma}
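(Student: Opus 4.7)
My plan is a direct case analysis following the three clauses (A1)--(A3) defining subdivision scheme~\textbf{A}, since each clause prescribes $\mu_{m+1}(Q')$ explicitly in terms of $\mu_m(Q)$ and no geometric input is needed. Case~(A1) I would dispatch first by noting it is vacuous: every subcube receives zero mass, so the hypothesis $\mu_{m+1}(Q')\neq 0$ in the conclusion is never met.

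For case~(A2), every subcube $Q'\subset Q$ carries mass $2^{-d}\mu_m(Q)$. The lower bound in \eqref{eqm+1} is built into the case hypothesis $2^{-d}\mu_m(Q)\geq 2^{-(m+1)\overline{d}}$, which already implies $\mu_{m+1}(Q')\geq c_d^{-2} 2^{-(m+1)\overline{d}}$. The upper bound follows from the upper half of \eqref{*m1*a} combined with the trivial inequality $2^{-d}\leq 2^{-\underline{d}}$, valid because $\underline{d}<d$ (this is the standing assumption of Theorem~\ref{mainth2}); explicitly, $\mu_{m+1}(Q')\leq 2^{-d}c_d^2\,2^{-m\underline{d}}\leq c_d^2\,2^{-(m+1)\underline{d}}$.

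For case~(A3), only one subcube $Q'$ is charged, with $\mu_{m+1}(Q')=\mu_m(Q)$. The lower bound then comes directly from the lower half of \eqref{*m1*a} since $2^{-m\overline{d}}\geq 2^{-(m+1)\overline{d}}$. The upper bound is the only mildly delicate point: I would combine the case hypothesis $\mu_m(Q)<2^d\cdot 2^{-(m+1)\overline{d}}$ with $\overline{d}\geq\underline{d}$ to get $\mu_m(Q)<2^d\cdot 2^{-(m+1)\underline{d}}$, and then absorb the stray factor $2^d$ into $c_d^2$ using the size of $c_d$ prescribed in \eqref{cd1}.

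The only obstacle is the bookkeeping: one must verify that the threshold $2^{-(m+1)\overline{d}}$ used in the dichotomy between (A2) and (A3) is the right one to secure both bounds at generation $m+1$ simultaneously, and that the induction hypothesis \eqref{*m1*a} at level $m$ is never violated by the preceding step. In particular, case~(A3) could potentially be reached from a cube $Q$ saturating the upper bound of \eqref{*m1*a}, which is exactly why $c_d$ must be chosen substantially larger than $2^d$; the inequalities \eqref{cd1}--\eqref{cd2} are designed with this slack built in.
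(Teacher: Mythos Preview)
Your proof is correct and follows essentially the same case analysis as the paper's own argument: in (A2) the lower bound comes from the case hypothesis and the upper bound from $\underline{d}<d$, while in (A3) the lower bound uses $2^{-m\overline{d}}\geq 2^{-(m+1)\overline{d}}$ and the upper bound uses the case hypothesis together with $\overline{d}\geq\underline{d}$ and $2^d\leq c_d^2$. The only (harmless) difference is that you explicitly note (A1) is vacuous, whereas the paper silently omits it.
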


\begin{proof}
Assume that we are in situation (A2). Hence,  initially we had 
$$ 2^d 2^{-(m+1) \overline{d}} \leq \mu_{m}(Q) \leq c_{d}^2  2^{-m \underline{d}}.$$
The construction ensures that for $Q'\sse Q$, $Q'\in \cad_{m+1}$,
$$     2^{-(m+1) \overline{d}} \leq \mu_{m+1}(Q' ) =2^{-d} \mu_m(Q) \leq 2^{-d} c_{d}^2 2^{-m \underline{d}} \leq c_{d}  ^22^{-(m+1)  \underline{d}}$$
which implies \eqref{eqm+1} (the last inequality holds since $\underline{d}<d$).

Assume that we are now  in situation (A3), which  implies that
$$ c_{d}^{-2}  2^{-m\overline{d}}\leq \mu_{m}(Q) <  2^d  2^{-(m+1) \overline{d}}<
2^d  2^{-(m+1) \underline{d}}.$$
Thus $\mu_{m+1}(Q')= \mu_{m}(Q)$ for one selected $ Q'\sse Q$, $Q'\in \cad_{m+1}$, and by \eqref{cd1}
$$  c_{d}^{-2} 2^{-(m+1)\overline{d}}\leq 
c_{d}^{-2} 2^{-m\overline{d}}\leq \mu_{m+1}(Q') <  2^d  2^{-(m+1) \underline{d}} <  c_{d}^22^{-(m+1) \underline{d}} .$$
\end{proof}

We prove now that if we apply scheme {\bf A} a sufficiently large number of times, then we obtain cubes $Q\in \cad_n$   which all satisfy
 \begin{equation}\label{*m2*a}
\mbox{ either } 2^{-n \overline{d}}\leq \mu_{n }(Q)< c_{d} 2^{-n \overline{d}},
 \text{ or }\mmm_{n}(Q)=0.
  \end{equation}

\begin{lemma}\label{*clstepaa2} 
Assume that $\mu_m$ satisfies  \eqref{*m1*a} with $Q\in \cad _{m}$, and apply     subdivision scheme {\bf A} to $Q$ to define $\mu_{m+1}$ on the subcubes  $Q'\sse Q$, $Q'\in \cad_{m+1}$,  then  apply     subdivision scheme {\bf A} to  all $Q'\subset Q$, $Q'\in \cad_{m+1}$,  to define  $\mu_{m+2}$ on all subcubes  $Q''\sse Q$, $Q''\in \cad_{m+2}$,
etc...

There exists an integer $\phi(Q) >m$ such that for every $n \geq \phi(Q) $  and  every cube $Q' \in \mathcal{D}_{n}$ with $Q' \subset Q$, \eqref{*m2*a} holds for $Q' $ and $\mu_{n}$.
\end{lemma}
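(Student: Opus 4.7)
The plan is to track, for every nonzero cube $Q'\in\cad_n$ contained in $Q$, the \emph{normalized mass} $c_n(Q'):=\mu_n(Q')\cdot 2^{n\overline{d}}$. The target \eqref{*m2*a} then amounts to $c_n(Q')\in[1,c_d)$, and since \eqref{cd1} yields $c_d\geq 2^{10d+1}>2^d$, it will suffice to show that eventually $c_n(Q')\in[1,2^d)$ for every nonzero descendant of $Q$.

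The first step is to recognise that the evolution of $c$ is synchronised across all such descendants. Indeed, the branching rule of scheme \textbf{A} depends only on the comparison $2^{-d}\mu_m(Q)\gtrless 2^{-(m+1)\overline{d}}$, which in normalised form reads $c_m(Q)\gtrless 2^{d-\overline{d}}$; and the effect on $c$ depends only on which branch is chosen: case (A2) multiplies $c$ by $2^{\overline{d}-d}<1$ on each of the $2^d$ surviving subcubes, while case (A3) multiplies $c$ by $2^{\overline{d}}>1$ on the unique surviving corner subcube. By induction on $n\geq m$, every nonzero $Q'\in\cad_n$ with $Q'\subset Q$ therefore shares the same value $c_n$, equal to the $(n-m)$-fold iterate of the one-dimensional map
\[
T(c)=\begin{cases}2^{\overline{d}-d}\,c & \text{if } c\geq 2^{d-\overline{d}},\\[2pt] 2^{\overline{d}}\,c & \text{if } c<2^{d-\overline{d}},\end{cases}
\]
applied to $c_m(Q)$.

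The second step is to show that the orbit of $c_m(Q)$ enters $[1,2^d)$ after finitely many iterations and stays there. Forward-invariance of $[1,2^d)$ is immediate: if $c\in[2^{d-\overline{d}},2^d)$ then $T(c)\in[1,2^{\overline{d}})$, and if $c\in[1,2^{d-\overline{d}})$ then $T(c)\in[2^{\overline{d}},2^d)$; both lie in $[1,2^d)$ because $0<\overline{d}<d$. For the approach phase, \eqref{*m1*a} gives $c_m(Q)\in[c_d^{-2},\,c_d^{2}\cdot 2^{m(\overline{d}-\underline{d})}]$, and I distinguish two regimes. If $c_m\geq 2^d$ only branch (A2) ever fires, shrinking $c$ by $2^{\overline{d}-d}<1$ per step without undershooting $1$ (the trigger $c\geq 2^{d-\overline{d}}$ forces $T(c)\geq 1$), so the orbit enters $[1,2^d)$ within $\lceil\log_2(c_m/2^d)/(d-\overline{d})\rceil$ iterations. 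If $c_m<1$ only branch (A3) fires, multiplying $c$ by $2^{\overline{d}}>1$ per step without overshooting $2^d$ (the trigger $c<2^{d-\overline{d}}$ forces $T(c)<2^d$), so the orbit enters $[1,2^d)$ within $\lceil\log_2(1/c_m)/\overline{d}\rceil$ iterations. If $c_m$ already lies in $[1,2^d)$ no iteration is needed. Choosing $\phi(Q)$ to exceed $m$ by this finite bound produces the required integer.

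I do not expect a substantive obstacle: this is essentially a one-dimensional dynamical argument. The only point that demands care is the non-overshoot/non-undershoot bookkeeping during the approach phase, but it is built into the thresholds of scheme \textbf{A} — the trigger levels $2^{d-\overline{d}}$ and the multipliers $2^{\overline{d}-d}$, $2^{\overline{d}}$ are precisely matched so that the attractor $[1,2^d)$ is reached monotonically and then preserved.
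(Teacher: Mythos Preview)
Your proof is correct and follows essentially the same approach as the paper: both arguments hinge on the observation that all nonzero descendants of $Q$ at a given generation share the same mass (so the evolution is governed by a single one-dimensional recursion), then show the orbit reaches and remains in the target range. Your normalization $c_n=\mu_n(Q')\,2^{n\overline d}$ and the explicit map $T$ make the bookkeeping more transparent than the paper's case-by-case narrative (which tracks the local dimension $\log\mu_n(Q')/\log 2^{-n}$ and splits into ``(A3) first'' versus ``(A2) first''), but the substance is the same.
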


\begin{proof}
We separate two cases depending on whether at step $m$
we need to apply (A2)   or (A3).

The second case can be reduced to the first. Indeed, taking into consideration \eqref{*m1*a}, suppose that we have
$$  c_{d}^{-2} 2^{-m\overline{d}}\leq \mu_{m}(Q) < 2^d 2^{-(m+1) \overline{d}}$$
and we start with subdivision (A3).
Then for $Q'\sse Q$, $Q'\in\mathcal{D}_{m+1}$, either $\mmm_{m+1}(Q')=0$, or
$$ c_{d}^{-2} 2^{-(m+1)\overline{d}}\leq \mu_{m+1}(Q')=\mu_{m}(Q)<
2^d 2^{-(m+1) \overline{d}} <  c_{d}^2 2^{-m \underline{d}}.$$
At the next step, we either have to apply division step (A3), or we can apply division step (A2). It is also clear that after finitely many steps we get to a situation when 
the first time  step (A2) must be applied. In that case, at level $n\geq m$, we have
exactly one $Q'\sse Q$, $Q'\in\mathcal{D}_{n}$ such that
\begin{equation}\label{*sA2}
2^d 2^{-(n+1)\overline{d}}\leq \mu_{n}(Q') <  c_d^2 2^{-n \underline{d}}
\end{equation}
and for all other descendants $Q''\sse Q$, $Q''\in\mathcal{D}_{n}$, $\mu_{n}(Q'')=0$. 
Then we can start an argument which is the same as if  we started with a subdivision (A2) from the very beginning. 

Observe that $\overline{d}<d$ and \eqref{*sA2} imply that
$2^{-n\overline{d}}<2^d2^{-(n+1)\overline{d}}\leq \mmm_n(Q')$.

For ease of notation we suppose that at step $m$ we can already start with a subdivision step (A2), that is \eqref{*sA2} holds with $m$ instead of $n$.

Now we apply (A2) to $Q$ and $\mu_m$, and iteratively to all subcubes of $Q$ of generation $n>m$, as long as $\mu_{n}(Q' ) \geq   2^d 2^{-(n+1) \overline{d}}$ for $Q' \in \cad_{n}$. 
 
 Observe that for a cube $Q'\in\mathcal{D}_{n}$ with $Q'\subset Q$, as long as 
$$\text{$2^{-d}\mu_{n}(Q')\geq  2^{-(n+1) \overline{d}}$, that is $ \mu_{n}(Q')\geq 2^d 2^{-(n+1) \overline{d}}
> 2^d 2^{-(n+1) {d}}=2^{-nd}$,}$$ 
the mass of every subcube  $Q''\subset Q'$ of next generation is such that $\mu_{n+1}(Q'') = 2^{-d} \mu_{n}(Q')$.
Hence  
\begin{equation}
\label{eqdec1}
\frac{\log \mu_{n+1}(Q'')}{ \log 2^{-(n+1)}}=\frac{-d\log 2 +\log \mu_{n}(Q')}{-(n+1) \log 2} > \frac{\log \mu_{n}(Q')}{ \log 2^{-n}}.
\end{equation}
This means that the local dimension increases from generation $n$ to generation $n+1$.

The construction ensures that  all the subcubes of $Q$ at a given generation $n>m$ have the same $\mu_{n}$-mass.  
 
 Further, by \eqref{eqdec1}, the sequence $ \frac{\log \mu_{n}(Q')}{ \log 2^{-n}}$ (for $Q'\subset Q$, $Q'\in \mathcal{D}_{n}$)  is strictly increasing. Assuming that this process (A2) is iterated a number of times very large when compared to $m$, we would have  $\frac{\log \mu_{n}(Q')}{ \log 2^{-n}}  \sim \frac{\log (2^{-dn})}{ \log 2^{-n}} =d$ so 
$\mu_{n}(Q' ) \sim 2^{-dn} <\!\!<  2^d 2^{-(n+1) \overline{d}}$, since $ \overline{d}<d$. 

Hence, after a finite number of iterations, we necessarily have $\mu_{n}(Q' ) <   c_{d}  2^{-n\overline{d}} $.

Call $ \phi(Q) \geq m$  the first integer such that  for all $Q'\subset Q$, $Q'\in \mathcal{D}_{\phi(Q) }$, 
$$  2^{-\phi(Q) \overline{d}} \leq  \mu_{\phi(Q) }(Q') \leq c_{d}   2^{-\phi(Q)  \overline{d}}.$$
\begin{remark}
In case we started with subdivision steps (A3) before getting to a subcube in which we could apply (A2), then we define $\phi(Q)$ starting from this subcube. 
Recall that for ease of notation at the beginning of this part of the argument,we supposed that we start with subdivision steps (A2) at the $m$th step.
\end{remark}
Recall that at generation $ \phi(Q) $, all the cubes $Q'\subset Q$, $Q'\in \mathcal{D}_{\phi(Q) }$, have the same $\mu_{\phi(Q) }$-mass.
This also shows that \eqref{*m2*a} holds at generation $\phi(Q).$

 Assume that \eqref{*m2*a} holds at generation $n\geq \phi(Q) $ for $Q'\in \cad_{n}$, $Q'\subset Q$. Then 
 \begin{itemize} 
\item
if $2^{-d}\mu_{n}(Q')\geq  2^{-(n+1) \overline{d}}$, then  we apply {(A2)} and for every $Q''\in \cad_{n+1}$, $Q''\subset Q'$,
$$    2^{-(n+1) \overline{d}}  \leq \mu_{n+1}(Q'')  \leq 2^{-d} c_{d}    2^{- n\overline{d}}  \leq   c_{d}   2^{-(n+1)\overline{d}}  . $$
\item
if $2^{-d}\mu_{n}(Q')\leq  2^{-(n+1) \overline{d}}$, then we apply {(A3)} and   for every $Q''\in \cad_{n+1}$, either $\mu_{n+1}(Q'')=0$ or 
$$  2^{-(n+1) \overline{d}}  \leq  2^{-n \overline{d}}  \leq \mu_{n+1}(Q'')  \leq  2^{d}    2^{-(n+1) \overline{d}} \leq  c_{d}  2^{-({n}+1)\overline{d}}  . $$
\end{itemize} 
In all cases, \eqref{*m2*a} holds at generation $n+1$. Hence the result.
\end{proof}

\subsection
{Subdivision scheme of type B}\label{*subsectypB}
\

Let $m \in \N$. Consider some $Q\in \cad_{m}$, and assume that,
\begin{equation}\label{*m2*a-bis}
 2^{-m \overline{d}}\leq \mu_{m }(Q)< c_{d}2^d 2^{-m \overline{d}}
  \end{equation}
   holds for  $\mu_m$ and $Q$ at generation $m$.

The purpose of the second subdivision scheme is to concentrate  the mass $\mu_{m}(Q)$  on two subparts of $Q$,  first in a thin region   close to the (inner part of the) boundary of $Q$ (very close to its smallest $\psi(m)$-face), and  second   around its center. 
More precisely, we will assign $\eta^{*}$   of the initial mass $\mu_{m}(Q)$ to part of an annulus very thin close to the border of $Q$ 
(on Figure \ref{mcafig1} this is the thin blue shaded rectangular region), and $  1-\eta^{*}$ in a small cube located around the center of $Q$
(on Figure \ref{mcafig1}, this is the small blue shaded central square). The remaining subcubes of $Q$ will receive zero $\mu$-mass.

\subsubsection[(B1)]
{Distributing (part of) the mass on the smallest face:}\label{*subsubB1}
\

Choose the smallest integer $\psi(m) $  such that 
\begin{equation}\label{*m2*b}
 2^{-(m+1) \ddd -1} \leq 2^{-\psi(m) }<  2^{-(m+1) \ddd }.
\end{equation}
Since $\delta >1$, we have $\psi(m)>m+1$.

Consider  $\partial_{\psi(m),1} Q$ the set of $ \psi(m)$-boundary cubes 
 on the smallest face of $Q$. Recalling \eqref{*dmnestb}, we have
\begin{equation}\label{*dmnest_old}
d_{\psi(m),1 } (Q)=2^{(d-1)(\psi(m)-m) }.
\end{equation}

% We modify the value of $c_{d}$ in \eqref{cd1} so that
% \begin{equation}
% \label{cd12}
%    \frac{ 2^{99\underline{d}} (1-   (  \frac{1-\eta}{10 d}   )  ) }{{c'_{d}} c_{d,1 }} \leq 1 \ \ \  \mbox{ and }  \ \ \  \frac{{c'_{d}} 2^{100 \underline{d}} 2^d  \Big (1- \Big (  \frac{1-\eta}{10 d} \Big )\Big ) }{ c_{d,1 } }\leq c_{d} .
% \end{equation}
% 

For all $Q'\in \partial_{\psi(m),1} Q$, $Q'\in \cad_{\psi(m)}$, we put
\begin{equation}\label{*m3*a}
\mu_{\psi(m)}(Q')= \eta^* \cdot \frac{1}{d_{\psi(m),1}(Q)}\mu_{m}(Q).
\end{equation}

Combining \eqref{*ab*a}, \eqref{*m2*a-bis}, \eqref{*m2*b}, \eqref{*dmnest_old} and \eqref{*m3*a}, we see that for all $Q'\in \partial_{\psi(m),1} Q$,
\begin{align}
\nonumber
\mu_{\psi(m)}(Q') &    \geq   \eta^* 2^{-(d-1)(\psi(m)-m) }  2^{-m \overline{d}}\\ 
\nonumber
 &    =  \eta^* 2^{-(d-1)\psi(m) } 2^{-m( \overline{d} - (d-1))}\\ 
\nonumber
 &    = \eta^* 2^{-(d-1)\psi(m) } 2^{-m \delta ( \underline{d}- (d-1))}\\ 
\nonumber
 &    \geq \eta^*  2^{-(d-1)\psi(m) } 2^{-\psi(m)   ( \underline{d}- (d-1)) }\\ 
\label{ineg11}
 &>  c_d^{-1} 2^{-\psi(m)\underline{d}},
 \end{align} 
 where   \eqref{cd1} has been used for the last lower bound. 
 By \eqref{*m2*b}, $(m+1)\ddd+1\geq \psi(m)%>(m+1)\ddd
 $, hence 
\begin{align}
\nonumber & -(d-1)\psi(m)-m \delta ( \underline{d}- (d-1))\\
\nonumber & \leq
 -\psi(m) \underline{d}+( \underline{d}- (d-1))\psi(m)-m \delta ( \underline{d}- (d-1))\\
 &\label{*psse}
 \leq 
 -\psi(m) \underline{d}+( \underline{d}- (d-1))((m+1)\ddd+1)-m \delta ( \underline{d}- (d-1))
\\
\nonumber &= 
  -\psi(m) \underline{d}+( \underline{d}- (d-1))(\ddd+1).
  \end{align}
Using  \eqref{*ab*aa}, \eqref{cd2}, \eqref{*m2*a-bis}, \eqref{*m2*b} and \eqref{*psse}, one   gets 
\begin{align}
\nonumber
\mu_{\psi(m) }(Q') 
\nonumber
& \leq \eta^*  2^{-(d-1)(\psi(m)-m) }  c_{d} 2^d 2^{-m \overline{d}}\\
\nonumber
& =\eta^*  2^d  c_d2^{-(d-1)\psi(m) } 2^{-m( \overline{d} - (d-1))}\\
\nonumber
&  \eta^* 2^d  c_d2^{-(d-1)\psi(m) } 2^{-m \delta ( \underline{d}- (d-1))}\\
\nonumber
& \leq\eta^* 2^d     c_{d}  2^{-\psi(m) \underline{d}}
\cdot 2^{(1+\ddd)({ \underline{d}- (d-1)})}
\\
\label{ineg11p}
&\leq  c_d^{2} 2^{-\psi(m)\underline{d}}.
 \end{align} 
 Finally,   for all $Q'\in \partial_{\psi(m),1} Q$ we have
\begin{equation}
\label{eqm'}
 c_d^{-2}2^{-\psi(m) \underline{d}} \leq \mu_{\psi(m) }(Q') \leq c_{d} ^22^{-\psi(m) \underline{d}}.
\end{equation}
In particular, these cubes satisfy \eqref{*m1*a}.

Intuitively, starting with a cube $Q$ such that $\mu(Q)\sim |Q|^{\overline{d}}$, we end up with many small cubes $Q'\subset Q$, all located on the border of $Q$, and such that $\mu(Q')\sim |Q'|^{\underline{d}}$. 

\medskip

\begin{figure}
\centering{
\includegraphics[width=1\textwidth]{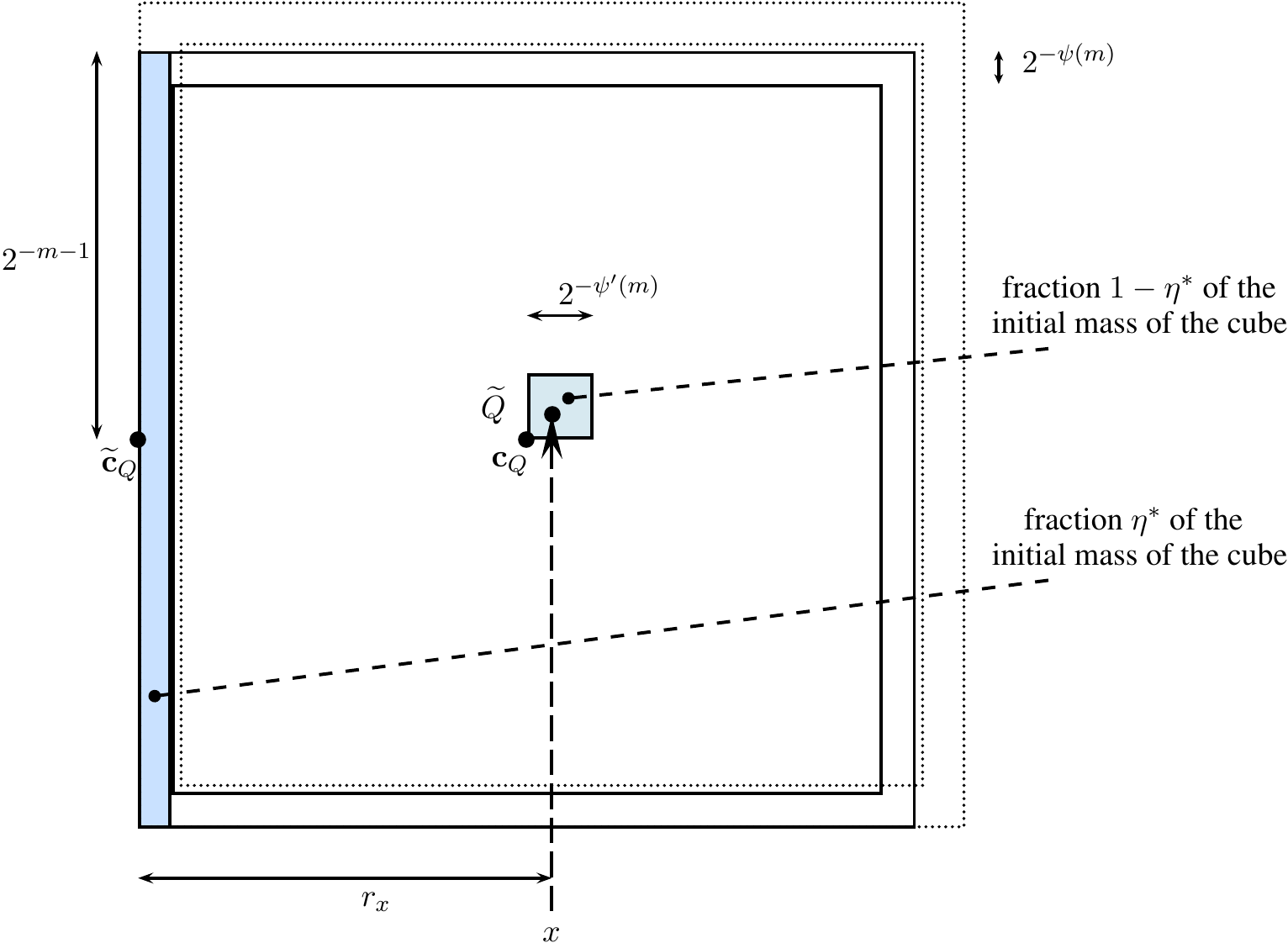}}
 \caption{Subdivision scheme $B$ for a cube $Q\in \cad_m$}
\label{mcafig1}
\end{figure}
%\end{comment}

%\begin{figure}[ht]

%\caption{Subdivision scheme $B$ for a cube $Q\in \cad_m$}

%\end{figure}

\subsubsection[(B2)]
{Distributing part of the mass close to the center of $Q$:}
\label{*332}
\

Let $\psi'(m)  \geq m$  be the unique  integer satisfying
\begin{equation}\label{*m3*c}
    2^{-m \overline{d} -1 } \leq  2^{- \psi'(m) \underline{d} } \leq 2^{-m \overline{d}}. 
\end{equation}
Then it is easy to see that when $m$ becomes large,  $m <\psi'(m) < \psi(m)$  (intuitively,   $\psi'(m) \sim  m \overline{d} /\underline{d} $ while $\psi(m)\sim m\delta$ and by \eqref{*ab*a}, $\delta > \overline{d} /\underline{d}$).
We assume that $m$ is so large that 
\begin{align}
\label{eqpsi''}
& (1-2^{-(\ppp'(m)-m)}-2^{-(\ppp(m)-m)})^{d-1}
>(1-2^{-(\ppp'(m)-m)+1})^{d-1}>\hhh^{*}.
 \end{align}
 
We denote by ${\mathbf c}_{Q}$
the center of $Q$ and  by $ \widetilde{Q} $   the (unique) dyadic cube of generation $\psi'(m)  $ that contains 
${\mathbf c}_{Q} $.
Then, since we deal with dyadic cubes,  ${\mathbf c}_{Q} $ is the smallest vertex of $ \widetilde{Q}$, that is ${\mathbf c}_{Q}={\mathbf v}_{\min,\widetilde{Q}} $ .
 We put
\begin{equation}\label{*mass2}
\mmm_{\psi'(m)  }(\widetilde{Q} )=( 1-\eta^{*})
\mu_{m}(Q).
\end{equation}

 By using \eqref{cd1},  \eqref{cd2}, \eqref{*m2*a-bis}, \eqref{*m3*c}  and \eqref {eqpsi''},
we obtain
\begin{align}
\label{*mm3*c}
 \mmm_{\psi'(m)    }(\widetilde{Q} ) \geq  (1-\eta^*) 2^{-m\overline{d}}
\geq  ( 1-\eta^{*})  2^{-\psi'(m)   \underline{d}} \geq c_d^{-2}    2^{-\psi'(m)   \underline{d}}  \end{align}
and
$$  \mmm_{\psi'(m)   }(\widetilde{Q} )<  2^d c_{d} 2^{-m\overline{d}} < 2^{d+1} c_{d}  2^{-\psi'(m)   \underline{d}} \leq c_{d}^{2} 2^{-\psi'(m)   \underline{d}}. 
$$

We deduce that $\widetilde Q$ and $\psi'(m) $ satisfy equation \eqref{eqm'}, i.e.
\begin{align}
\label{*mm4*c} 
c_d^{-2}2^{-\psi'(m) \underline{d}} \leq \mu_{\psi'(m) }(\wq) \leq c_{d} ^22^{-\psi'(m) \underline{d}}.
\end{align}

 Hence one can apply subdivision scheme {\bf A} to it, iteratively, for all integers $n$
 such that $\psi ' (m)  < n \leq \psi(m) $. At the end of the process, by Lemma \ref{*clstepaa}, we get a 
 collection of cubes $Q'\in \cad_{\psi(m) }$ and a measure $\mu_{\psi(m) }$ such that 
 they all satisfy either $\mu_{\psi(m) }(Q')=0$, or 
\begin{eqnarray}
\label{eqm'2}
  c_{d}^{-2} 2^{-\psi(m) \overline{d}} \leq  \mmm_{\psi(m) }({Q'} ) \leq c_{d}^{2} 2^{- \psi(m) \underline{d}}.
\end{eqnarray}

\begin{definition}
\label{defcenter}
If $Q'\in \cad_{\psi(m) }$ is such that $Q'\subset \wq \subset Q$  (where $\wq$ is the cube of $\mathcal{D}_{\psi'(m)}$   containing   ${\mathbf c}_{Q}$),  
then $Q'$ is called {\bf a $B$-central cube at scale $\psi(m)$ associated to $Q\in \cad_m$}.
\end{definition}

By construction, 
\begin{equation}
\label{center}
\mu\Big( \bigcup_{ \substack{Q' \subset Q: \ \ Q' \text{ is a $B$-central}\\
\text{ cube at scale  }\psi(m)
%{\mbox{\tiny{ $Q' \subset Q:$   $Q'$   is a $B$-central cube}}}\\{{\mbox{\tiny{  at scale $\psi(m)$} }  }
} 
}Q'\Big) = ( 1-\eta^{*})\mu(Q).
\end{equation}

\begin{lemma}
\label{lemcenter}
If, for some large integer $m$,  $Q'\in \cad_{\psi(m) } $ is $B$-central, then for any $x\in Q'$, 
there exists $r_{x}$ such that $P_{\mu_{\psi(m)} }(x,r_{x},\delta,\eta)$ holds and
$$2^{-m-1}\leq r_{x}< 2^{-m-1}\cdot 1.125.$$
\end{lemma}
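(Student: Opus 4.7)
The plan is to set
\[
r_x := \| x - {\mathbf v}_{\min,Q} \|_\infty = 2^{-m-1} + \max_{1 \le i \le d} \alpha_i,
\]
where $Q \in \mathcal{D}_m^+$ is the unique generation-$m$ cube containing $\widetilde{Q} \supset Q'$, and I decompose $x^{(i)} = {\mathbf c}_Q^{(i)} + \alpha_i$ with $\alpha_i \in [0, 2^{-\psi'(m)})$. This immediately yields $r_x \in [2^{-m-1}, 2^{-m-1} + 2^{-\psi'(m)})$, and inequality \eqref{eqpsi''} forces $\psi'(m)-m$ to be arbitrarily large for $m$ large, so $2^{-\psi'(m)} < 2^{-m-4}$ and hence $r_x < 2^{-m-1} \cdot 1.125$, as required.

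With this choice I will verify three geometric facts. First, $B(x, r_x) \supset Q$: the ball reaches the smallest vertex ${\mathbf v}_{\min,Q}$ by construction, and the opposite extreme of $Q$ is automatically covered because $x$ lies in the positive orthant relative to ${\mathbf c}_Q$. Second, the central cube $\widetilde{Q}$ is contained in $B(x, r_x - r_x^{\delta})$, since its diameter $2^{-\psi'(m)}$ is negligible compared to $r_x - r_x^{\delta} \approx 2^{-m-1}$ for large $m$. Third, and crucially, the strip $\partial_{\psi(m),1}Q$ on the smallest face of $Q$ lies entirely in the annulus $A(x, r_x, \delta)$: containment in $B(x, r_x)$ is immediate from the choice of $r_x$, while disjointness from $B(x, r_x - r_x^{\delta})$ rests on the estimate $r_x^{\delta} \ge 2^{-(m+1)\delta} > 2^{-\psi(m)}$ (from \eqref{*m2*b}), which forces every strip point to have coordinate-$1$ distance from $x$ exceeding $r_x - r_x^{\delta}$.

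Combining these three facts yields $\mu_{\psi(m)}(A(x, r_x, \delta)) \ge \eta^{*} \mu_m(Q)$ (the strip's full mass lands in the annulus), while $\mu_{\psi(m)}(B(x, r_x))$ equals $\mu_m(Q)$ plus contributions from neighboring generation-$m$ cubes. The ball overshoots $Q$ by at most $2 \cdot 2^{-\psi'(m)}$ in each coordinate direction, so most neighbors contribute at most $O(2^{m-\psi'(m)})\,\mu_m(\text{neighbor})$, which is negligible by \eqref{eqpsi''}. Putting the pieces together gives
\[
\frac{\mu_{\psi(m)}(A(x, r_x, \delta))}{\mu_{\psi(m)}(B(x, r_x))} \ge \frac{\eta^{*}}{1 + o(1)} \ge \eta \qquad \text{for $m$ large enough,}
\]
which is precisely $P_{\mu_{\psi(m)}}(x, r_x, \delta, \eta)$.

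The delicate part will be controlling the contribution from the neighboring cube $Q_+^{(1)}$ adjacent to $Q$ in the positive coordinate-$1$ direction: its own strip sits right at the $Q/Q_+^{(1)}$ interface and can intersect $B(x, r_x)$ non-trivially. The key step will be a careful comparison between the annulus width $r_x^{\delta}$ and the coordinate-$1$ offset $\alpha_1$, leveraging the quantitative choices of $\psi(m)$ and $\psi'(m)$ in \eqref{*m2*b} and \eqref{*m3*c}, to ensure that any such additional captured mass either stays out of the inner ball or is matched by corresponding mass entering the annulus, so that the ratio above is preserved.
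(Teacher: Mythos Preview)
Your choice $r_x = \|x - {\mathbf v}_{\min,Q}\|_\infty = 2^{-m-1} + \max_{i}\alpha_i$ makes the inclusion $Q \subset B(x,r_x)$ work, but it breaks your third geometric fact. Write $j$ for an index where $\alpha_j = \max_i \alpha_i$, and suppose $j\neq 1$ (which happens for a generic $x\in Q'$ when $d\ge 2$). For a point $y$ in the strip $\partial_{\psi(m),1}Q$ one has
\[
|y_1 - x_1| \le 2^{-m-1} + \alpha_1,
\]
whereas
\[
r_x - r_x^{\delta} = 2^{-m-1} + \alpha_j - r_x^{\delta}.
\]
So the condition ``coordinate-$1$ distance exceeds $r_x - r_x^{\delta}$'' requires $r_x^{\delta} > (\alpha_j - \alpha_1) + 2^{-\psi(m)}$. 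But $r_x^{\delta}$ is of order $2^{-\psi(m)}$ (by \eqref{*m2*b}), while $\alpha_j - \alpha_1$ can be as large as $2^{-\psi'(m)}$, and $\psi'(m) < \psi(m)$ means $2^{-\psi'(m)} \gg 2^{-\psi(m)}$. Thus for such $x$ the strip is \emph{not} contained in $A(x,r_x,\delta)$; in fact the coordinate-$1$ test places essentially the whole strip inside the inner ball, and only a fraction of order $r_x^{\delta}/2^{-m}$ of the strip (the part near the extremes in coordinate $j$) survives in the annulus. That is far too little to yield the bound $\mu(A)\ge \eta^*\mu_m(Q)$.

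The paper avoids this by taking $r_x := x_1 - k_1 2^{-m} = 2^{-m-1}+\alpha_1$, i.e.\ the first coordinate only. With this choice the coordinate-$1$ distance from $x$ to any strip point lies in $(r_x - 2^{-\psi(m)}, r_x]$, which automatically exceeds $r_x - r_x^{\delta}$, so every strip cube that is in $B(x,r_x)$ is in the annulus. The price is that $B(x,r_x)$ no longer covers the whole strip; the paper then shows it covers at least an $\eta^*$-fraction (via \eqref{eqpsi''}), which suffices since $(\eta^*)^2 = \eta$.

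Your last paragraph also misidentifies the delicate point. By Remark~\ref{rem2} the construction in Section~\ref{secconstruction} arranges that the cubes of $\mathcal D_m^+$ to which scheme~\textbf{B} is applied are isolated, so $\mu(B(x,2^{-m})) = \mu_m(Q)$ and neighboring cubes contribute nothing. Your attempted bound on the neighbor contribution would in any case fail: the overshoot of $B(x,r_x)$ into $Q^{(1)}_+$ has width $\alpha_1+\alpha_j$ in coordinate~$1$, which exceeds $2^{-\psi(m)}$, so it swallows the \emph{entire} strip $\partial_{\psi(m),1}Q^{(1)}_+$ and would add a full $\eta^*\mu_m(Q^{(1)}_+)$ to the ball, not an $o(1)$ term.
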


\begin{remark}
\label{rem2}
At inequalities \eqref{*mAxrxd} and \eqref{*mAxrxdd} in the next proof, we will use that  $\mu(B(x,2^{-m}))  = \mu(Q) $,
 which means essentially that $\mu_{\psi(m)}$ (and $\mu$) charges only the cube $Q$ and not its neighbors at generation $m$. This will be a consequence of our construction in Section \ref{secconstruction}.
\end{remark}

\begin{remark}
\label{rem2prime}
Lemma \ref{lemcenter} is stated for the measure $\mu_{\psi(m)}$, but it   also holds for the measure $\mu$ obtained at the end of the construction. This simply follows from \eqref{kolmo}.

\end{remark}

\begin{proof}

For simplicity, we write $\mu$ for $\mu_{\psi(m)}$. As explained above, this abuse of notation is justified by \eqref{kolmo}.

Let $Q'$ be a $B$-central cube, and $x\in Q'$. We shall prove that, for some $r_x>0$,   $\mu(A(x,r_{x}, \delta)) \geq \eta \cdot \mu(B(x,r_{x}))$.

Write $x=(x_{1},...,x_{d})$ and $Q=\prod_{i=1}^{d}[k_{i}\cdot 2^{-m},(k_{i}+1)\cdot 2^{-m})$.
See Figure \ref{mcafig1} where 
$x$ is marked with a dot in the $B$-central shaded blue cube, an arrow with dashed line
is pointing at $x$, the label $x$ is written at the bottom end of this arrow,
the distance $r_{x}$ is marked by a solid left-right arrow, and the boundary of 
$A(x,r_{x},\ddd)$ is shown with dotted lines.

Set $r_{x}:=x_{1}-k_{1}\cdot 2^{-m}$. Since  $\|x-{\mathbf c}_{Q}\|_{\oo}\leq 2^{-\psi'(m)}$ and ${\mathbf c}_{Q}=((k_{1}+\frac{1}{2})2^{-m},...,(k_{d}+\frac{1}{2})2^{-m})$,
we see that 
%$$2^{-m-1}\leq r_{x}\leq (1+2\cdot 2^{-\ppp'(m)})2^{-m-1}$$
$$2^{-m-1}\leq r_{x}\leq 2^{-m-1} + 2^{ -\psi'(m)}  .$$

By construction, $B(x,r_x)$ contains  the largest part  of  $\dd_{\ppp(m),1}Q$. Indeed, if $Q'$ is a cube of $\dd_{\ppp(m),1}Q$ containing one  $y\in\dd_{\ppp(m),1}Q$ with $\|x-y\|_{\oo}> r_x$, then   $ \| {\mathbf c}_{Q} -y\|_{\oo}  \geq  \|x-y\|_{\oo}  -  \| { \mathbf c}_{Q} -x  \|_{\oo}  \geq  r_x -   \| { \mathbf c}_{Q} -x  \|_{\oo} \geq 2^{-m-1} - 2^{-\psi'(m)} $.   
Denote by $\widetilde{\mathbf c}_{Q}$ the projection of ${\mathbf c}_{Q}$
onto the smallest face of $Q$, see Figure  \ref{mcafig1}.
Since  the supremum norm is used,
the cubes   of  $\dd_{\ppp(m),1}Q$ that may not intersect  $B(x,r_x)$ are thus located outside of $B(\widetilde{\mathbf c}_{Q}, 2^{-m-1} - 2^{-\psi'(m)} )$. Observe that the
 intersection of $B(\widetilde{\mathbf c}_{Q}, 2^{-m-1} - 2^{-\psi'(m)} )$ with the smallest face of $Q$ is a $(d-1)$-dimensional cube of side length $2^{-m} - 2^{1-\psi'(m)}$, and that 
 its projection onto the smallest face of $Q$ is of $(d-1)$-dimensional volume
 $ (2^{-m} - 2^{1-\psi'(m)} )^{d-1}$.

 Observe also that $ r_x^\delta \geq  2^{-\delta(m+1)} > 2^{-\psi(m)}$ so all the above cubes $Q'$  belonging simultaneously  to $B(x,r_x)$ and $\dd_{\ppp(m),1}Q$  also are included in  $A(x,r_{x},\ddd)$. 
 These cubes are forming a single layer, and their projection onto
 the smallest face of $Q$ is of $(d-1)$-dimensional volume ${2^{-(d-1)\psi(m)}} $.
 
 Recalling that   
$d_{\ppp(m),1}(Q) =2^{(d-1)(\psi(m)-m)}$, 
 we obtain that $A(x,r_{x},\ddd)$ contains more than 
 \begin{align*}
  \frac{   (2^{-m} - 2^{1-\psi'(m)} )^{d-1} } {2^{-(d-1)\psi(m)}}  
 & \geq d_{\ppp(m),1}(Q)   (1-2^{-(\ppp'(m)-m)+1})^{d-1}\\
  & \geq \eta^{*}d_{\ppp(m),1}(Q)
  \end{align*}
many cubes from $\dd_{\ppp(m),1}Q$, where \eqref{eqpsi''} is also used.
Hence, recalling \eqref{*m3*a}, 
\begin{equation}\label{*mAxrxd}
\mmm(A(x,r_{x},\ddd))>\eta^{*}d_{\ppp(m),1}(Q)\cdot \eta^{*}\frac{1}{d_{\ppp(m),1}(Q)}\mmm_{m}(Q)
\end{equation}
$$=(\eta^{*})^{2}\mmm_{m}(Q)=\hhh\mmm(Q).$$
 Finally, by Remark \ref{rem2}, our construction   ensures that 
%$\mu(A(x,2^{-m} ,\delta)) =  \eta \cdot \mu(Q) $
 $\mu(B(x,r_{x}))  \leq \mu(Q) $. Thus
 \begin{equation}\label{*mAxrxdd}
\mu(A(x,r_{x},\delta)) \geq \eta   \mu(B(x,r_{x})),
\end{equation}
i.e. $P_{\mu }(x,r_{x} ,\delta,\eta)$ holds.

Finally, the fact that $2^{-m-1}\leq r_{x}< 2^{-m-1}\cdot 1.125$ follows from $2^{-m-1}\leq r_{x}\leq 2^{-m-1} + 2^{ -\psi'(m)}  $ and $2^{-\psi'(m)+m}$ tends to zero when $m\to +\infty$.
\end{proof}

\subsubsection[(B3)]
{Giving a zero-mass to the other cubes, and defining the measure $\mu_{\psi(m)}$ on $Q$}\label{part333}
\

For the remaining cubes at generation $\psi(m)$, i.e. those $Q'\sse Q$, $Q'\in \cad_{\psi(m)}$, $Q'\not\in \partial_{\psi(m),1}(Q)$ and $Q'\not \sse \widetilde{Q} $,  we put  $\mu_{\psi(m)}(Q')=0.$

\medskip

The measure  $\mmm_{\psi(m) }(Q')$ is now defined for all
$Q'\sse \widetilde{Q} $, $Q'\in \cad_{\psi(m)}$.

 \medskip

Observe that by \eqref{*m3*a}, the properties of the subdivision scheme A and \eqref{*mass2},
we have
$$\sum_{Q'\sse Q,\ Q'\in\cad_{\psi(m)}}\mu_{\psi(m)}(Q')=\mu_{m}(Q),$$
i.e.  we indeed distributed the mass of $Q$ onto the cubes $Q'\sse Q$, $Q'\in\cad_{\psi(m)} $.

\medskip

\subsubsection[(B4)]
{Defining  inside $Q$ the measure $\mu_n$ for $m<n<\psi(m)$}\label{part334}
\

Forgetting for a while the details of the construction, and just focusing on the result, starting from $Q\in \cad_m$  with a given $\mu_m$-weight satisfying \eqref{*m2*a}, we end up with a measure $\mu_{\psi(m)}$ well defined on all cubes $Q'\subset Q$, $Q'\in \cad_{\psi(m)}$. 
 
Since we jumped from level $m $ to $\psi(m)$,  we can easily define
$\mmm_{n}$ for $m<n<\psi(m)$, {but only inside $Q$}. Indeed, for such an integer $n$, the measure $\mu_n$ is simply defined by using $\mu_{\psi(m)}$ as ``reference" measure:   for $Q'\in \cad_{n}$, $Q'\subset Q$, we set  
\begin{equation}\label{*mmmq}
\mmm_{n}(Q')=\sum_{Q''\sse Q',\  Q''\in \cad_{\psi(m)} }\mu_{\psi(m) }(Q'').
\end{equation}
It is easily checked that this definition is consistent with the definition of $\mu_{\psi(m)} $ that we gave in  \eqref{*mass2} for instance.

%For $Q'\sse \widetilde{Q} $ and $m_{n+1}'<m\leq m_{n+1}$ we defined a little earlier  $\mmm_{m}(Q')$, but it is clear by elementary properties of Step A that
%the definition in \eqref{*mmmq} assigns the same value to $\mmm_{m}(Q')$.

%
%\begin{figure}[ht]
%\includegraphics[width=0.95\textwidth]{mca2.pdf}
%
%\includegraphics[width=0.6\textwidth]{mca3.pdf}
%\end{figure}
%

Next lemma shows that all the "intermediary" measures $\mu_n$ so defined share the same scaling properties as $\mu_m$ and $\mu_{\psi(m)}$.

\begin{lemma}\label{*clstepab2} 
Assume that $\mu_{m}$ satisfies  \eqref{*m2*a-bis} for some $Q\in \cad _{m}$, and apply the subdivision scheme {\bf B} to define $\mu_{m+1},...,\mu_{\psi(m)}$ on the subcubes  of $  Q$ of generation $m+1,...,\psi(m)$.

Then:
\begin{enumerate}
\item
 for every $n\in \{m,..., \psi(m)\}$, for every $Q'\in \cad_{n} $ such that $Q'\sse Q$ and $\mu_n(Q')\neq 0$,  \eqref{*m1*a} holds  for $Q'$ with  the measure $\mu_{n}$.
\item
for every cube $Q'\in \cad_{\psi(m)} $ such that  $Q'\subset Q$ and $\mu_{\psi(m)}(Q')\neq 0$, there exists $n\in \{m,...,\psi(m)\}$ and a (unique) cube $Q_n\in \cad_n$ such that $Q'\subset Q_n\subset Q$ and  \eqref{eqm'} holds for  $\mu_n $ and $Q_n$.
\end{enumerate}
\end{lemma}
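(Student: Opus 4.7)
The plan is to run a structural case analysis based on the two-sided construction in scheme $\mathbf{B}$. By construction, the non-zero cubes of $\mathcal{D}_{\psi(m)}$ inside $Q$ fall into two disjoint families: the \emph{boundary family} $\partial_{\psi(m),1}Q$ receiving mass in step (B1), and the \emph{central family} contained in $\widetilde Q$ receiving mass in (B2) followed by iterations of scheme $\mathbf{A}$ between generations $\psi'(m)$ and $\psi(m)$. The key geometric observation is that the boundary layer has first coordinate in $[k_1 2^{-m},\,k_1 2^{-m}+2^{-\psi(m)})\subset [k_1 2^{-m},\,k_1 2^{-m}+2^{-m-1})$, whereas $\widetilde Q$ has first coordinate in $[k_1 2^{-m}+2^{-m-1},\,k_1 2^{-m}+2^{-m-1}+2^{-\psi'(m)})$. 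Hence these two families sit on opposite sides of the dyadic hyperplane that halves $Q$ in the first coordinate, so that for every $n\geq m+1$, any non-zero $Q'\in\mathcal{D}_n$ with $Q'\subset Q$ sees \emph{either} only boundary cubes \emph{or} only central cubes.

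For claim (2), I would split on a non-zero $Q'\in\mathcal{D}_{\psi(m)}$ with $Q'\subset Q$. If $Q'\in\partial_{\psi(m),1}Q$, the choice $Q_n=Q'$ at $n=\psi(m)$ works because \eqref{eqm'} was already established in Subsection \ref{*subsubB1}. If instead $Q'\subset\widetilde Q$, the choice $Q_n=\widetilde Q$ at $n=\psi'(m)$ works because \eqref{*mm4*c} has exactly the form of \eqref{eqm'} with $\psi'(m)$ in place of $\psi(m)$. Uniqueness of $Q_n$ is immediate from the nestedness of dyadic cubes.

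For claim (1), fix $n\in\{m,\dots,\psi(m)\}$ and a non-zero $Q'\subset Q$ in $\mathcal{D}_n$. There are four cases. \textbf{(i)} If $n=m$, the statement is \eqref{*m2*a-bis} itself. \textbf{(ii)} If $n\geq\psi'(m)$ and $Q'\subset\widetilde Q$, I would iterate Lemma~\ref{*clstepaa} along the chain $\widetilde Q\supset\cdots\supset Q'$, taking \eqref{*mm4*c} as the base case. \textbf{(iii)} If $Q'$ is of central type with $\widetilde Q\subsetneq Q'$ (so $m<n\leq\psi'(m)$), then by \eqref{*mmmq} and the mass conservation of scheme $\mathbf{A}$ one has $\mu_n(Q')=\mu_{\psi'(m)}(\widetilde Q)=(1-\eta^*)\mu_m(Q)$; combining \eqref{cd2}, \eqref{*m2*a-bis} and \eqref{*m3*c} yields both sides of \eqref{*m1*a} at generation $n$, since the ratios $2^{-m\overline{d}}/2^{-n\overline{d}}$ and $2^{-m\overline{d}}/2^{-n\underline{d}}$ remain bounded for $n\in[m,\psi'(m)]$. \textbf{(iv)} If $Q'$ is of boundary type, it contains exactly $2^{(d-1)(\psi(m)-n)}$ cubes of $\partial_{\psi(m),1}Q$, each with mass $\eta^*\mu_m(Q)/2^{(d-1)(\psi(m)-m)}$; summing gives $\mu_n(Q')=\eta^*\cdot 2^{(d-1)(m-n)}\mu_m(Q)$, and using the relation $\overline{d}-(d-1)=\delta(\underline{d}-(d-1))$ from \eqref{*ab*a}, the exponent rewrites as $-(d-1)n-m\delta(\underline{d}-(d-1))$, whose slack against $-n\overline{d}$ and $-n\underline{d}$ is governed by the signed quantity $(n-m\delta)(\underline{d}-(d-1))$.

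I expect the main obstacle to be the combinatorial bookkeeping in case (iv): one must verify that the factors $\eta^*$, the count $2^{(d-1)(\psi(m)-n)}$, and the discrepancy between $n$ and $m\delta$ all fit within $c_d^{\pm 2}$. This is precisely the reason $c_d$ was chosen in \eqref{cd1}--\eqref{cd2} to satisfy simultaneously $c_d\geq 2^{10d+1}$, $\eta^*\geq c_d^{-1}$, $1-\eta^*\geq c_d^{-1}$ and $\eta^* 2^{d+1+\delta}\leq c_d$; the last inequality absorbs the $2^{\delta+1}$-slack arising from $\psi(m)-m\delta\in[0,\delta+1]$, which is read off from \eqref{*m2*b}.
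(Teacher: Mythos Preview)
Your proposal is correct and follows essentially the same case analysis as the paper: part~(ii) via \eqref{eqm'} for boundary cubes and \eqref{*mm4*c} for central cubes, and part~(i) via the four-way split (base case, central inside $\widetilde Q$ via Lemma~\ref{*clstepaa}, central containing $\widetilde Q$, and boundary). Your explicit remark that the boundary layer and $\widetilde Q$ lie on opposite sides of the hyperplane $\{x_1=k_1 2^{-m}+2^{-m-1}\}$ is a point the paper leaves implicit but relies on.

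The one place where the arguments diverge is the \emph{upper} bound in your case~(iv). The paper does not go back to $\mu_m(Q)$; instead it uses the already established bound \eqref{eqm'} at level $\psi(m)$ and multiplies by the count $2^{(d-1)(\psi(m)-n)}$, then invokes only $d-1<\underline d$ to get $\mu_n(Q'')\leq c_d^2 2^{-n\underline d}$ in one line. Your route---rewriting the exponent as $-(d-1)n-m\delta(\underline d-(d-1))$ via \eqref{*ab*a} and controlling the slack $(n-m\delta)(\underline d-(d-1))$ through \eqref{*m2*b} and \eqref{cd2}---also works, but needs the full constant $\eta^*2^{d+1+\delta}\leq c_d$ rather than just $d-1<\underline d$. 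The paper's route is shorter here; yours has the merit of making transparent \emph{why} $c_d$ had to absorb a $2^{\delta+1}$ factor in \eqref{cd2}.
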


\begin{proof} 

(i) The cases where $n=m$ and $n=\psi(m)$ follow from  \eqref{*m2*a-bis}, \eqref{ineg11}, \eqref{ineg11p}, and  \eqref{eqm'2}.

Suppose $m<n<\ppp(m)$. Two cases are separated depending on whether we deal with the border or the $B$-central cubes.

Consider first  $Q''\in \dd_{n,1}Q$, i.e. a cube located on the border of $Q$. Choose an arbitrary $Q'\in \dd_{\ppp(m),1}Q$,
$Q'\sse Q$. By \eqref{eqm'} and $d-1<\underline{d}$, one  obtains
$$\mmm_{n}(Q'')=2^{(\ppp(m)-n)(d-1)}\mmm_{n}(Q')\leq 2^{(\ppp(m)-n)(d-1)}
c_{d}^{2}2^{-\ppp(m)\underline{d}}$$
$$<c_{d}^{2}2^{(\ppp(m)-n)\underline{d}}\cdot 2^{-\ppp(m)\underline{d}}<
c_{d}^{2}2^{-n\underline{d}}.$$
For the estimate from below \eqref{cd1},  \eqref{*m2*a-bis}, \eqref{*dmnest_old}, \eqref{*m3*a} and \eqref{*mmmq} yield
$$\mmm_{n}(Q'')=2^{(\ppp(m)-n)(d-1)}\hhh^{*}2^{(m-\ppp(m))(d-1)}\mmm_{m}(Q)\geq
\hhh^{*}2^{(m-n)(d-1)}\cdot 2^{-m\overline{d}}$$
$$>\hhh^{*}2^{(m-n)\overline{d}}\cdot 2^{-m\overline{d}}=\hhh^{*}
2^{-n\overline{d}}>c_{d}^{-2}2^{-n\overline{d}}, $$
hence  \eqref{*m1*a} holds for $Q''$.

For the $B$-central cubes, suppose that $m<n<\ppp'(m)$ and  $Q''\in \cad_{n}$, $Q''\supset \widetilde{Q}$, where $\widetilde{Q}$ was defined after \eqref{eqpsi''}.
 Then $$\mmm_{n}(Q'')=\mmm_{\ppp(m)}(\widetilde{Q})=\mmm_{\ppp'(m)}(\widetilde{Q})=(1-\hhh^{*})\mmm_{m}(Q)$$
 and by \eqref{*mm4*c}
 \begin{equation}\label{*mQta}
 \mmm_{n}(Q'')=\mmm_{\ppp'(m)}(\widetilde{Q})\leq c_{d}^{2}2^{-\ppp'(m)\underline{d}}<c_{d}^{2}2^{-n\underline{d}}.
 \end{equation}
 On the other hand by \eqref{cd2} and \eqref{*m2*a-bis}
\begin{equation}\label{*mQtb}
\mmm_{n}(Q'')=(1-\hhh^{*})\mmm_{m}(Q)>(1-\hhh^{*})2^{-m\overline{d}}
>(1-\hhh^{*})2^{-n\overline{d}}>c_{d}^{-2}2^{-n\overline{d}},
\end{equation}
hence  \eqref{*m1*a} holds for $Q''$.

 Finally, when $ \ppp'(m) \leq n <\psi(m)$ and $Q''\in \mathcal{D}_n$ and $Q''\subset \widetilde {Q}$, the fact that  \eqref{*m1*a} holds for $Q''$ follows from the application of subdivision scheme {\bf A} to the cube $\widetilde{Q}$ which satisfies  \eqref{*m1*a}.

 \medskip

 Part  (ii) follows from \eqref{eqm'} and \eqref{*mm4*c}.
\end{proof}

Observe that the construction ensures that, as claimed at the beginning of this section, 
$$
\sum_{Q'\in\partial_{\psi(m),1} Q}\mu_{\psi(m)}(Q')= \eta^{*} \cdot   \mu_{m}(Q) .
$$

%
%\begin{definition}
%When such a subdivision scheme is applied, we call the cubes $\{Q': Q'\subset Q \mbox{ and }Q'\in \cad_{n} \mbox{ for some }n\in [m+1,\psi(m)]\}$  the set of {\bf B}-cubes associated with $Q$.
%\end{definition}
%

\subsection{Construction of the measure of Theorem \ref{mainth2}.} \label{secconstruction}\

Recall that  $\mmm_{0}$ is the Lebesgue measure on the cube $\zu^d$. By definition $\mu_0$ satisfies \eqref{*m1*a}.

\medskip

{\bf Step 1:}

We apply Subdivision Scheme A, $m'_1$ times to the cube $\zu^d$, where $m'_1 \geq \phi([0,1]^{d})$ is such that
 Lemma \ref{*clstepaa2}  and \eqref{eqpsi''} simultaneously hold for $m=m'_1$. We obtain a measure $\mu_{m'_1}$ defined on cubes of $\cad_{m'_1}$, such that for every  $Q\in  \cad_{m'_1}$, either $\mu_{m'_1}(Q)=0$ or  
the first half of \eqref{*m2*a} holds true with  $\mu_{m'_1}$. 
 
% In particular, for all cubes $Q'\in \cad_{m'_{1}}$  \eqref{*m2*a} holds for $Q'$ and $\mu_{m'_{1}}$.  
Now, for each cube $Q'\in \cad_{m'_{1}}$, we select the cube $Q$ of generation $m_1=m'_1+1$ located at its smallest vertex and we set $\mu_{m_1}(Q) = \mu_{m'_1}(Q')$, so that $\mu_{m_1}$ satisfies \eqref{*m2*a-bis}.
 
 This last step ensures that the cubes at generation $m_1$ supporting $\mu_{m_1}$ are isolated, and Lemma \ref{lemcenter} (together with Remark \ref{rem2}) applies. 
 
% 
%\medskip
%
%{\bf Step 2}
%
%We apply Subdivision scheme B to all cubes $Q\in \cad_{m_1}$.  Call $m_2=\psi(m_1)$.
%
% We obtain a measure $\mu_{m_2}$ defined on $\cad_{m_2}$ such that the properties of Lemma \ref{*mass1} hold for all dyadic cubes $Q\in \cad_n$, $m_1\leq n\leq m_2$.
% 
% 
%\medskip
%
%{\bf Step 3}
%
%We apply Subdivision scheme A  to all cubes of generation $m_2$. By Lemma \ref{*clstepaa2} , for each cube $Q\in \cad_{m_2}$, there exists an integer $\phi(Q)$ such that  for $n\geq \phi(Q)$, for every cube $Q' \in \mathcal{D}_{n}$ with $Q' \subset Q$, either $\mu_{n}(Q')=0$ or \eqref{*m2*a} holds for $Q'$ and $\mu_{n}$. 
%
% Setting $m_3=\max(\Phi(Q): Q\in \cad_{m_2}$, we are left with a measure $\mu_{m_3}$ such that for all cubes $Q'\in \cad_{m_3}$ either $\mu_{m_3}(Q)=0$ or \eqref{*m2*a} holds for $Q'$ and $\mu_{m_3}$. 
%

\medskip

Now we are able to iterate the construction:
 
\medskip

{\bf Step $2k$:}

We apply Subdivision scheme B to all cubes $Q\in \cad_{m_{2k-1}}$.  Call $m_{2k}=\psi(m_{2k-1})$.

 We obtain a measure $\mu_{m_{2k}}$ defined on $\cad_{m_{2k}}$ such that the properties of Lemma \ref{*clstepaa2}  hold for all dyadic cubes $Q\in \cad_n$, $m_{2k-1}\leq n\leq m_{2k}$.

\medskip

{\bf Step $2k+1$:}

We apply Subdivision scheme A  to all cubes of generation $m_{2k}$. By Lemma \ref{*clstepaa2}, for each cube $Q\in \cad_{m_{2k}}$, there exists an integer $\phi(Q)$ such that  for $n\geq \phi(Q)$, for every cube $Q' \in \mathcal{D}_{n}$  \eqref{*m2*a} holds for $Q'$ and $\mu_{n}$. 

 Setting $m'_{2k+1}=\max \{\phi(Q): Q\in \cad_{m_{2k}} \}$, we are left with a measure $\mu_{m'_{2k+1}}$ such that for all cubes $Q'\in \cad_{m'_{2k+1}}$ \eqref{*m2*a} holds for $Q'$ and $\mu_{m'_{2k+1}}$.

Setting $m_{2k+1}=m'_{2k+1}+1$, for each  cube $Q'\in \cad_{m'_{2k+1}}$, we select the cube $Q$ of generation $m_{2k+1}$ located at its smallest vertex and we set $\mu_{m_{2k+1}}(Q) = \mu_{m'_{2k+1}}(Q')$, so that $\mu_{m_{2k+1}}$ satisfies \eqref{*m2*a-bis} and the cubes  at generation $m_{2k+1}$ supporting $\mu_{2k+1}$ are isolated.

%During our definition of our mass distribution we start with Steps of type A
%carried out in all $Q\in\cad_{m_{n}}$
%at these steps we put $m_{n+1}=m_{n}+1$. 
%We repeat them until \eqref{*m2*a} is satisfied by any cube $Q\in \cad_{m_n}^+$.
%Then we apply Step B once. After that we repeat again Step A for a while 
%until \eqref{*m2*a} is satisfied again. Then we apply once Step B. We continue this procedure.
 
\medskip

We are now ready to construct the set and the measure satisfying the conditions of Theorem \ref{mainth2}.

Call $Q_n(x)$  the unique dyadic cube $Q\in \cad_n$ that contains $x$.

\begin{proposition}\label{*clsta}
The sequence $(\mu_n)_{n\geq 1}$ converges to a measure $\mu$ which is supported by a Cantor-like set $\mathcal{C} $ defined by
$$\mathcal{C} = \bigcap_{n\geq 1} \  \bigcup_{Q\in \cad_{n}: \mu_n(Q) \neq 0}  \ Q.$$

For every $x\in \mathcal{C}$, for every $n$,
\begin{equation}
\label{encadrement1}
 c_{d}^{-2} 2^{-n\overline{d}}\leq \mu_{n}(Q_n(x)) \leq c_{d}^2 2^{-n \underline{d}},
\end{equation}
and there exist two strictly increasing sequences of integers $(\overline{j}_n(x))_{n\geq 1}$  and $( \underline{j}_n(x))_{n\geq 1}$ satisfying
\begin{equation}
\label{encadrement2}
 2^{-\overline{j}_n(x) \overline{d}}\leq \mu_{\overline{j}_n(x) }(Q)< c_{d} 2^{-\overline{j}_n(x) \overline{d}}
\end{equation}
and
\begin{equation}
\label{encadrement3}
 c_{d}^{-2}  2^{-\underline{j}_n(x) \underline{d}}\leq \mu_{\underline{j}_n(x) }(Q)< c_{d}^2 2^{-\underline{j}_n(x) \underline{d}}
\end{equation}

\end{proposition}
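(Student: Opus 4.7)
The plan breaks naturally into three parts: weak convergence, the uniform bound \eqref{encadrement1}, and extraction of the two subsequences. First I would establish the existence of $\mu$ and identification of its support. By construction of both schemes A and B, for every $m$ and every $Q\in \cad_m$ the refinement property $\mu_{m+1}(Q)=\mu_m(Q)$ holds (this is explicit for scheme A, and for scheme B it follows from the identity $\sum_{Q'\subset Q,\,Q'\in\cad_{\psi(m)}}\mu_{\psi(m)}(Q')=\mu_m(Q)$ noted at the end of Section 3.3). Thus $\mu_n$ defines a consistent sequence of cylinder measures on $[0,1)^d$, and Kolmogorov's extension theorem (as cited after \eqref{kolmo}) delivers a probability measure $\mu$ such that $\mu(Q)=\mu_n(Q)$ for every $Q\in\cad_n$ and every $n$. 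Since the cubes with $\mu_n(Q)=0$ receive no mass at any later step, the support of $\mu$ is contained in the compact set $\mathcal{C}$, which is non-empty as a nested intersection of non-empty compact sets.

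Second, I would prove \eqref{encadrement1} by induction on $n$, tracking which subdivision scheme is active. At the initialization ($n=0$), $\mu_0([0,1)^d)=1$ trivially satisfies the bound. Given the inductive hypothesis at level $n$ for some $Q\in\cad_n$ with $\mu_n(Q)\neq 0$, the next level is governed either by scheme A, scheme B, or the ``isolation step'' $m'_{2k+1}\mapsto m_{2k+1}$. When scheme A is applied, Lemma \ref{*clstepaa} (together with Lemma \ref{*clstepaa2} for iterations) preserves \eqref{*m1*a}, which is exactly \eqref{encadrement1}. When scheme B is applied on the interval $[m_{2k-1},m_{2k}]$, Lemma \ref{*clstepab2}(i) gives the same bound on every intermediate level. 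The isolation step is transparent because it only shifts mass from one cube to a single subcube of the next generation. For any $x\in\mathcal{C}$ and any $n$, $Q_n(x)$ carries positive mass, so one of these cases applies and the bound follows.

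Third, I would extract the two subsequences. For $\overline{j}_n(x)$: at each odd stage $2k+1$, the construction applies scheme A iteratively up to generation $m'_{2k+1}$, at which point Lemma \ref{*clstepaa2} guarantees that every $Q\in\cad_{m'_{2k+1}}$ with $\mu_{m'_{2k+1}}(Q)\neq 0$ satisfies \eqref{*m2*a}. Setting $\overline{j}_n(x):=m'_{2n+1}$ for $x\in\mathcal{C}$ produces a strictly increasing sequence along which \eqref{encadrement2} holds, since $Q_{m'_{2n+1}}(x)$ is exactly such a cube. For $\underline{j}_n(x)$: at each even stage $2k$, scheme B is applied to each cube of generation $m_{2k-1}$ to produce $\mu_{m_{2k}}$. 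By Lemma \ref{*clstepab2}(ii), for the unique $Q'=Q_{m_{2k}}(x)$ there is an integer $n_k\in\{m_{2k-1},\dots,m_{2k}\}$ and a cube $Q_{n_k}\supset Q'$ with $Q_{n_k}=Q_{n_k}(x)$ for which \eqref{eqm'} holds. Setting $\underline{j}_n(x):=n_n$ yields a strictly increasing sequence (which can be arranged by always choosing $n_k>n_{k-1}$, since $m_{2k-1}>m_{2(k-1)}$) satisfying \eqref{encadrement3}.

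The only delicate point will be ensuring strict monotonicity of the two extracted subsequences while respecting the simultaneous requirements of the two bounds; this is guaranteed by the alternation of stages, since the stages $2k$ and $2k+1$ occur on disjoint ranges of generations and the stages themselves tend to infinity. No new estimate beyond the cited lemmas is needed, so the argument is essentially a bookkeeping of the construction of Section \ref{secconstruction}.
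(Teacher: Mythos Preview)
Your proposal is correct and follows essentially the same route as the paper: the paper's own proof is a single sentence stating that \eqref{encadrement1}--\eqref{encadrement3} follow immediately from \eqref{eqm+1} of Lemma~\ref{*clstepaa}, Lemma~\ref{*clstepaa2}, and Lemma~\ref{*clstepab2}, which are exactly the ingredients you invoke. Your write-up simply makes the bookkeeping explicit (convergence via Kolmogorov, the isolation step, and the extraction of the two subsequences), and your observation that strict monotonicity of $(\underline{j}_n(x))$ is automatic because $n_k\ge m_{2k-1}>m_{2(k-1)}\ge n_{k-1}$ is a useful clarification the paper leaves implicit.
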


\begin{proof}
Inequalities \eqref{encadrement1}, \eqref{encadrement2} and \eqref{encadrement3}  follow immediately from \eqref{eqm+1} of Lemma \ref{*clstepaa},
 Lemma \ref{*clstepaa2} and Lemma \ref{*clstepab2}.
\end{proof}

Recall the that the upper and lower local dimensions of the measure $\mmm$ are defined by
$$\overline{d}_\mu(x)=\limsup_{r\searrow 0}\frac{\log (\mmm(B(x,r)))}{\log r} \text{ and }\underline{d}_\mu(x)=\liminf_{r\searrow 0}\frac{\log (\mmm(B(x,r)))}{\log r}. $$

From the previous proposition, we easily deduce the following property.

\begin{corollary}
For every $x\in \mathcal{C}$, $\underline{d}_\mu(x) =\underline{d}$  and $\overline{d}_\mu(x) =\overline{d}$. In particular, 
the measure $\mu$ satisfies 
$$ \underline{d} = \underline{\dim}(\mu) <  \overline{\dim}(\mu) = \overline{d}.$$
\end{corollary}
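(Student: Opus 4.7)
The plan is to translate the dyadic-cube mass bounds from Proposition \ref{*clsta} into bounds on $\mu$-masses of $\|\cdot\|_\infty$-balls, via two elementary inclusions: for every $x\in\mathcal{C}$ and every $n\in\N$ one has $Q_n(x)\subset B(x,2^{-n})$ (coordinate differences inside a cube of side $2^{-n}$ are bounded by $2^{-n}$); conversely, for any $r\leq 2^{-n}$ the ball $B(x,r)$ meets at most $2^d$ dyadic cubes of $\cad_n$. The work then splits into four inequalities: the two ``outer" ones $\overline{d}_\mu(x)\leq \overline{d}$ and $\underline{d}_\mu(x)\geq \underline{d}$ follow from the uniform two-sided bound \eqref{encadrement1}, while the sharper $\overline{d}_\mu(x)\geq \overline{d}$ and $\underline{d}_\mu(x)\leq \underline{d}$ come from evaluating along the subsequences $\overline{j}_n(x)$ and $\underline{j}_n(x)$ supplied by \eqref{encadrement2} and \eqref{encadrement3}.

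For the general bounds I would, given a small $r$, pick $n$ with $2^{-n-1}<r\leq 2^{-n}$. The first inclusion combined with the lower half of \eqref{encadrement1} applied to $Q_{n+1}(x)$ gives $\mu(B(x,r))\geq c_d^{-2}2^{-(n+1)\overline{d}}\gtrsim r^{\overline{d}}$, so $\log\mu(B(x,r))/\log r\leq \overline{d}+o(1)$ and $\overline{d}_\mu(x)\leq\overline{d}$. The second inclusion, combined with the uniform upper bound $\mu_n(Q)\leq c_d^{2}2^{-n\underline{d}}$ of \eqref{encadrement1} applied to each of the at most $2^d$ dyadic cubes meeting $B(x,r)$, gives $\mu(B(x,r))\leq 2^d c_d^{2}2^{-n\underline{d}}\lesssim r^{\underline{d}}$, hence $\underline{d}_\mu(x)\geq \underline{d}$. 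For $\underline{d}_\mu(x)\leq\underline{d}$ I would take the radii $r_k=2^{-\underline{j}_k(x)}$ and apply \eqref{encadrement3} to $Q_{\underline{j}_k(x)}(x)\subset B(x,r_k)$.

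The delicate step, and the main obstacle, is $\overline{d}_\mu(x)\geq \overline{d}$: choosing $r_k=2^{-\overline{j}_k(x)}$, the bound \eqref{encadrement2} literally controls only the single cube $Q_{\overline{j}_k(x)}(x)$, whereas $B(x,r_k)$ may meet up to $2^d$ dyadic cubes of $\cad_{\overline{j}_k(x)}$, and a priori the neighbors could be heavier. To close this gap I would revisit the construction and use the fact that each $\overline{j}_k(x)$ belongs to a Scheme A phase, in which the stronger uniform statement \eqref{*m2*a} of Lemma \ref{*clstepaa2} applies to \emph{every} non-zero cube of $\cad_{\overline{j}_k(x)}$, not just to $Q_{\overline{j}_k(x)}(x)$. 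Summing over the $\leq 2^d$ neighbors then yields $\mu(B(x,r_k))\leq 2^d c_d\, 2^{-\overline{j}_k(x)\overline{d}}\lesssim r_k^{\overline{d}}$, and therefore $\overline{d}_\mu(x)\geq \overline{d}$. Combining the four local bounds gives $\underline{d}_\mu(x)=\underline{d}$ and $\overline{d}_\mu(x)=\overline{d}$ for every $x\in\mathcal{C}$; since $\mu$ is supported by $\mathcal{C}$, unwinding the definitions of $\underline{\dim}(\mu)$ and $\overline{\dim}(\mu)$ recalled in the introduction immediately produces the conclusion $\underline{d}=\underline{\dim}(\mu)<\overline{\dim}(\mu)=\overline{d}$.
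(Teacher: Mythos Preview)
Your argument is correct and is exactly the kind of routine ball-versus-dyadic-cube comparison the authors have in mind; the paper itself gives no proof beyond the sentence ``From the previous proposition, we easily deduce the following property,'' so you have supplied the details they omitted. Two minor remarks: first, your appeal to \eqref{encadrement1} for the \emph{neighboring} cubes in the bound $\underline{d}_\mu(x)\geq\underline{d}$ is legitimate because \eqref{encadrement1} is stated for every $x\in\mathcal{C}$, hence for every charged cube at every level; second, for the delicate direction $\overline{d}_\mu(x)\geq\overline{d}$ there is an alternative (and slightly cleaner) route built into the construction: at the levels $m_{2k+1}=m'_{2k+1}+1$ the charged cubes are deliberately isolated (each is the smallest-vertex subcube of its parent), so the ball $B(x,2^{-m_{2k+1}})$ meets only the single charged cube $Q_{m_{2k+1}}(x)$ and the neighbor issue disappears. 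Your Scheme~A uniformity argument via Lemma~\ref{*clstepaa2} is equally valid.
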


\begin{proposition}\label{*clcentr}
For $\mmm$-almost every $x$, there exist infinitely many  integers $n$
such that $Q_{2n}(x)$ is a  $B$-central cube at Step $m_{2n}$.
\end{proposition}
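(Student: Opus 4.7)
The plan is to introduce for each $k\geq 1$ the event
\[A_k=\{x\in\mathcal{C}:Q_{m_{2k}}(x)\text{ is }B\text{-central at step }m_{2k}\},\]
to show that $\mu(A_k)=1-\eta^*$ and that these events satisfy a conditional-independence relation with respect to the dyadic filtration, and then to conclude by a Borel--Cantelli-type computation. The starting observation is equation \eqref{center}: for every $Q\in\mathcal{D}_{m_{2k-1}}$ with $\mu(Q)>0$, the union of the $B$-central cubes of $Q$ at scale $m_{2k}=\psi(m_{2k-1})$ carries exactly $(1-\eta^*)\mu(Q)$. Summing over all such $Q$ (which partition the support of $\mu$ up to a $\mu$-null set) gives $\mu(A_k)=1-\eta^*$.

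The key technical step, which I would prove by induction on $j\geq 1$, is that for every $k\geq 1$ and every cube $Q_0\in\mathcal{D}_{m_{2k}}$ with $\mu(Q_0)>0$,
\[\mu(A_{k+j}\cap Q_0)=(1-\eta^*)\mu(Q_0),\qquad\text{equivalently,}\qquad \mu(A_{k+j}^c\cap Q_0)=\eta^*\mu(Q_0).\]
For $j=1$, the cube $Q_0$ is refined by scheme A between steps $m_{2k}$ and $m_{2k+1}$ into finitely many cubes $\widetilde Q\in\mathcal{D}_{m_{2k+1}}$ of positive mass whose $\mu$-masses sum to $\mu(Q_0)$; applying \eqref{center} to each such $\widetilde Q$ at the next even step and summing gives the claim. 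The inductive step from $j$ to $j+1$ follows by partitioning $Q_0$ into cubes $Q_1\in\mathcal{D}_{m_{2(k+1)}}$ of positive mass and invoking the induction hypothesis with $k$ replaced by $k+1$.

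Iterating this identity, for any $1\leq k_1<\cdots<k_N$ one obtains $\mu\bigl(\bigcap_{i=1}^N A_{k_i}^c\bigr)=(\eta^*)^N$. Indeed, $\bigcap_{i=1}^{N-1}A_{k_i}^c$ is, modulo $\mu$-null sets, a disjoint union of cubes $Q_\alpha$ at generation $m_{2k_{N-1}}$, and applying the identity above with $Q_0=Q_\alpha$, $k=k_{N-1}$, $k+j=k_N$ and summing over $\alpha$ gives the inductive step from $N-1$ to $N$. Since $0<\eta^*<1$, taking $N\to\infty$ with $k_i=K+i-1$ yields $\mu\bigl(\bigcap_{k\geq K}A_k^c\bigr)=0$ for every $K$, hence $\mu(\limsup_k A_k)=1$, which is the conclusion. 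The only point requiring attention is the dyadic bookkeeping: each $A_{k_i}^c$ is (mod $\mu$-null sets) a union of cubes at generation $m_{2k_i}$, so by nesting the $(N-1)$-fold intersection is a union of cubes at the finest generation $m_{2k_{N-1}}$, as required.
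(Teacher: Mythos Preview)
Your proposal is correct and follows essentially the same approach as the paper: define the events $A_k$, establish the product formula coming from the self-similar construction, and conclude via Borel--Cantelli. The paper's version is terser---it asserts outright that the $(\mathcal{A}_n)$ are independent under $\mu$ (``clear from the uniformity of the construction'') with $\mu(\mathcal{A}_{n_1}\cap\cdots\cap\mathcal{A}_{n_p})=(1-\eta^*)^p$, then invokes the second Borel--Cantelli lemma---whereas you spell out the conditional identity $\mu(A_{k+j}^c\cap Q_0)=\eta^*\mu(Q_0)$ via the dyadic bookkeeping and conclude by directly computing $\mu\bigl(\bigcap_{k\geq K}A_k^c\bigr)=\lim_N(\eta^*)^N=0$, bypassing the need to quote Borel--Cantelli. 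Your version thus supplies the detail the paper leaves implicit; otherwise the arguments are the same.
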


The construction ensures that $\mu$-almost all points are regularly located in a $B$-central cube, in the sense of Definition \ref{defcenter}.

\begin{proof}
For $n\geq 1$, call
$$\mathcal{A}_n=\left\{x\in \mathcal{C}: 
\begin{array}{l} Q_{m_{2n}}(x)  \mbox{ is a $B$-central cube at generation } m_{2n}\\  \mbox{associated with a cube }  Q\in \cad_{m_{2n-1}} \end{array} \right\}$$.

By construction, and recalling \eqref{center}, we get $\mu(\mathcal{A}_n)= 1- \eta^{*}$.

Also, it is clear from the uniformity of the construction that the sequence $(\mathcal{A}_n)_{n\geq 1}$  is independent when seen as events with respect to the probability measure $\mu$:  for every finite set of integers $(n_1,n_2,...,n_p)$, 
$$\mu(\mathcal{A}_{n_1}\cap \mathcal{A}_{n_2}\cap \cdots \cap \mathcal{A}_{n_p})=  (1-\eta^{*})^p.$$

Applying the (second) Borel--Cantelli lemma (see for example \cite{Grimmett01}, Section 7.3),
 we obtain that $\mu$-almost every point belongs to an infinite number of sets $\mathcal{A}_n$. Hence the result.
\end{proof}

\begin{remark}\label{*rem344}
Observe that the same proof gives that $\mu$-almost every point belongs to an infinite number of sets  $\mathcal{A}_n^c$.
\end{remark}

\begin{corollary}
The measure $\mu$ satisfies $\mu(E_{\mu}(\delta,\eta))  = 1$.
\end{corollary}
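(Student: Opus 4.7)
The plan is to combine Proposition \ref{*clcentr} with Lemma \ref{lemcenter} and pass from $\mu_{\psi(m)}$ to $\mu$ via \eqref{kolmo}. Since the two previous results have already done the conceptual work, the argument should be a short synthesis.

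First, I would invoke Proposition \ref{*clcentr}: for $\mu$-almost every $x\in \mathcal{C}$, there exists an infinite sequence of integers $n_1 < n_2 < \cdots$ such that, for every $k$, the cube $Q_{m_{2n_k}}(x)\in \cad_{m_{2n_k}}$ is a $B$-central cube at scale $m_{2n_k}=\psi(m_{2n_k-1})$ associated with some parent cube $Q^{(k)}\in \cad_{m_{2n_k-1}}$. Fix such an $x$.

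Next, for each $k$, I would apply Lemma \ref{lemcenter} to the cube $Q^{(k)}$ (setting $m = m_{2n_k-1}$, so that $\psi(m)=m_{2n_k}$). This produces a radius $r_x^{(k)}$ with
\[
2^{-m_{2n_k-1}-1}\leq r_x^{(k)} < 2^{-m_{2n_k-1}-1}\cdot 1.125
\]
such that $P_{\mu_{\psi(m)}}(x, r_x^{(k)}, \delta, \eta)$ holds. By Remark \ref{rem2prime}, the consistency relation \eqref{kolmo} transfers this property to the limit measure $\mu$, i.e. $P_{\mu}(x, r_x^{(k)}, \delta, \eta)$ holds. Because the construction ensures $m_{2n_k-1}\to\infty$ as $k\to\infty$, the sequence $(r_x^{(k)})_{k\geq 1}$ tends to $0$. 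By definition of $E_{\mu}(\delta,\eta)$, this means $x\in E_{\mu}(\delta,\eta)$.

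Since the full-measure set provided by Proposition \ref{*clcentr} is contained in $E_{\mu}(\delta,\eta)$, we conclude $\mu\big(E_{\mu}(\delta,\eta)\big)=1$. The only point that requires any care is the legitimacy of transferring the estimate from $\mu_{\psi(m)}$ to $\mu$: this is exactly guaranteed by \eqref{kolmo}, because both $B(x, r_x^{(k)})$ and the annulus $A(x, r_x^{(k)}, \delta)$ are, at scales finer than $\psi(m_{2n_k-1})$, unions of generation-$\psi(m_{2n_k-1})$ cubes (up to the boundary effects handled in the proof of Lemma \ref{lemcenter}), so their $\mu_{\psi(m)}$-mass and $\mu$-mass coincide.
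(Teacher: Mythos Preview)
Your proposal is correct and follows exactly the route the paper takes: the paper's own proof is the single sentence ``This follows from Proposition \ref{*clcentr} and Lemma \ref{lemcenter},'' and you have simply spelled out the synthesis, correctly invoking Remark \ref{rem2prime} for the passage from $\mu_{\psi(m)}$ to $\mu$. One small quibble: in your final paragraph the claim that $B(x,r_x^{(k)})$ and $A(x,r_x^{(k)},\delta)$ are literally unions of generation-$\psi(m)$ dyadic cubes is not quite accurate, but since you already appeal to Remark \ref{rem2prime} (which is the correct justification) this extra commentary is harmless.
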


This   follows from Proposition \ref{*clcentr} and Lemma \ref{lemcenter}.

%\subsection{Case where $\overline{d}=d$}
%\label{caslimite}
%
%
%Well, just adapt the preceding construction by letting $\overline{d}$ slowly tend to $d$ along the construction (when $n$ tends to infinity).

%\subsection{Property valid for all $\eta$ simultaneously}
%\label{caslimite}
%
%???
%Well, just adapt the preceding construction by letting $\overline{d}$ slowly tend to $d$ along the construction (when $n$ tends to infinity). - not that easy I think.
%%%%%%%%%%%%%%%%%%%%%%%%%%%%%%%%%%
%%%%%%%%%%%%%%%%%%%%%%%%%%%%%%%%%%
%%%%%%%%%%%%%%%%%%%%%%%%%%%%%%%%%%
%%%%%%%%%%%%%%%%%%%%%%%%%%%%%%%%%%
%%%%%%%%%%%%%%%%%%%%%%%%%%%%%%%%%%
%%%%%%%%%%%%%%%%%%%%%%%%%%%%%%%%%%
%%%%%%%%%%%%%%%%%%%%%%%%%%%%%%%%%%
%%%%%%%%%%%%%%%%%%%%%%%%%%%%%%%%%%
%%%%%%%%%%%%%%%%%%%%%%%%%%%%%%%%%%
%%%%%%%%%%%%%%%%%%%%%%%%%%%%%%%%%%
%%%%%%%%%%%%%%%%%%%%%%%%%%%%%%%%%%
%%%%%%%%%%%%%%%%%%%%%%%%%%%%%%%%%%

\section{Proof of Theorem \ref{mainth3}}\label{proof3}

We deal with the case where $\underline{\dim} \,\mu \leq d-1$.
In this situation, as stated in  Theorem \ref{mainth3}, there is no more   restriction
on $\ddd$. However we can argue almost like
 in the proof of Theorem \ref{mainth2}. 
 
Fix an arbitrary $\ddd>1$.
 
 As before, we are going to use various subdivision schemes to build a measure fulfilling our properties.
 
 The {Subdivision scheme of type {\bf A}} in Section \ref{*subsectypA}   is left unchanged.
 
 The {Subdivision scheme of type {\bf B}} in Section \ref{*subsectypB}  requires some adjustments (in particular, one cannot use   \eqref{*ab*aa} any more). The problem comes from the fact  that when $\underline{\dim}\,\mu$ is less than $d-1$, when trying to spread the mass of a given cube $Q \in \mathcal{D}_m$ such that $\mmm_{m}(Q)\sim 2^{-m\overline{d} }$  to the  smaller cubes $Q'$  located on its smallest face $\partial_{\psi(m),1}Q$, it is not possible to impose that  $\mmm_{m}(Q')\sim 2^{-m\underline{d} }$  for all $Q' \in \partial_{\psi(m),1}Q$, since 
$$ d_{\psi(m) ,1}(Q) 2^{-\psi(m)\underline{d}} \sim 2^{-(d-1)(\psi(m)-m)} 2^{-\psi(m)\underline{d}} 
>\!\!\!>  2^{-m\overline{d}} .$$
% The reason is simple. Suppose that $\underline{d}\leq d-1$ and   a cube 
% $Q\in \cad_{m}$ are fixed such that $\mmm_{m}(Q)\sim 2^{-m\overline{d} }$. The integer  $\psi(m)$ can be selected as in \eqref{*m2*b}, i.e. $\psi(m)\sim m\delta>\!\!\!> m$. The  equality  $d_{\psi(m),1}(Q)=2^{(d-1)(\psi(m)-m)}$  holds by  \eqref{*dmnestb}.
% Trying to reproduce the construction of Section \ref{*subsectypB},  we would like to equidistribute all, or most of,  the mass of $Q$ onto the cubes $Q'$ located on the smallest face 
% $\partial_{\psi(m),1}Q$ of $Q$. For this, recalling \eqref{*dmnest_old}, we would need to put a measure $\sim 2^{-m\overline{d}}\cdot 
% 2^{-(d-1)(\psi(m)-m)}$ on each such cube $Q'$. Since $\underline{d} \leq \overline{d}$ and $\underline{d} \leq d-1$, 
%we have 
%$$2^{-\psi(m)\underline{d}}\geq 2^{-m\underline{d}-(d-1)(\psi(m)-m)}
%>\!\!\!>  2^{-m\overline{d}}\cdot 2^{-(d-1)(\psi(m)-m)}.$$
%
%This means that the   weight  distributed onto
%the cubes $Q' $ of $\partial_{\psi(m),1}Q$  is less than the weight $\sim 2^{-\psi(m)\underline{d}}$
%such a cube received in the   argument developed in the previous construction, and is not large enough to be bounded by $2^{-\psi(m)\underline{d}}$.
 In other words, the mass of the initial cube $Q$ is not large enough to give the sufficient weight to each $Q'$. So we introduce a Subdivision scheme of type {\bf C} to solve this issue.
%
%In fact, to satisfy our assumptions about the lower and upper dimension of our measure for certain (small) values of $\overline{d}< d-1$ it may happen that even we are not allowed to charge all such cubes.
%

\medskip

We discuss   the modifications needed to adapt Subsection \ref{*subsectypB} to this situation $\underline{d}< d-1$, and give the main ideas to proceed - some proofs are omitted, since they are exactly similar to those of Section \ref{proof2}.

\subsection{Subdivision scheme of type {\bf C}}\label{*subsectypC}

As explained above, a new subdivision scheme is introduced, by essentially modifying a little bit Subdivision scheme of type {\bf B}.

Assume that  $Q\in \cad_{m}$ satisfies \eqref{*m2*a-bis}. The integer  $\psi(m)$ is defined by \eqref{*m2*b}, as in the previous Section. We know that  \eqref{*dmnest_old} holds.
As   argued above, one can see that using \eqref{*m2*a-bis}
\begin{equation}\label{*th3*3a}
\frac{1}{d_{\psi(m),1}(Q)}\mmm_{m}(Q)<
 c_{d}2^{d}2^{-m\overline{d}}\cdot 2^{-(d-1)(\psi(m)-m)}
\end{equation}
$$\leq c_{d}2^{d}2^{-m\underline{d}}\cdot 2^{-\underline{d}(\psi(m)-m)}= c_{d}2^{d}2^{-\psi(m)\underline{d}}.$$

First, the way the mass is distributed on the border of $Q$ (Subsection 
\ref{*332}) is modified as follows.

\subsubsection{Distributing (part of) the mass on the smallest face:}\label{*subsubC1}
 Two cases are separated. 

\mk

{\bf $\bullet$  Case {\bf  $\underline{d} < d-1 \leq \overline{d}$}:}    We set
$$d_{\psi(m),1,+}(Q)=d_{\psi(m),1}(Q)=2^{(d-1)(\ppp(m)-m)}$$
and 
for any $Q'\in \partial_{\psi(m),1,+}Q:=\partial_{\psi(m),1}Q$, put
\begin{equation}\label{*m3*ab}
\mmm_{\psi(m)}(Q')=\eta^{*} \frac{1}{d_{\psi(m),1,+}(Q)}\mmm_{m}(Q).
\end{equation}
In this case we also define $\partial_{\psi(m),1,0}Q=\ess$, and $d_{\psi(m),1,0}(Q)=0.$

For all $m<n\leq \ppp(m)$ and $Q'\in \dd_{n,1,+}Q:=\dd_{n,1}Q$,   using
$\underline{d}\leq d-1\leq \overline{d}$ and \eqref{*m2*a-bis}, one gets
\begin{align}
\label{*I*a}
\mmm_{n}(Q')& =\eta^{*}\frac{1}{d_{\psi(m),1,+}(Q)}\cdot \mmm_{m}(Q)\cdot 2^{(d-1)(\ppp(m)-n)}\\
\nonumber
&<\eta^{*} 2^{-(d-1)(\ppp(m)-m)} \cdot c_{d}2^{d}2^{-m\overline{d}}\cdot 2^{(d-1)(\ppp(m)-n)}\\
\nonumber
&=\eta^{*} c_{d}2^{d}2^{m(d-1)}\cdot 2^{-m\overline{d}}\cdot 2^{-(d-1)n} \leq \eta^{*} c_{d}2^{d}\cdot 2^{-n\underline{d}} \\
\nonumber
&< c_{d}^{2}2^{-n\underline{d}}.
\end{align}
Similarly,
\begin{align}\label{*II*a}
\mmm_{n}(Q') & >\eta^{*} 2^{-(d-1)(\ppp(m)-m)} \cdot 2^{-m\overline{d}}\cdot 2^{(d-1)(\ppp(m)-n)}\\
\nonumber
&=\eta^{*} 2^{-(d-1)(n-m)}2^{-m\overline{d}}\geq 
\eta^{*} 2^{-\overline{d}(n-m)}2^{-m\overline{d}}  \geq\eta^{*} 2^{-n\overline{d}}\\
\nonumber
&
> c_{d}^{-2}2^{-n\overline{d}}.
\end{align}
This means that \eqref{*m1*a} will remain true for these cubes. 

\noindent To resume, the scheme is in this case the same as before.

\mk

{\bf $\bullet$   Case {\bf  $\underline{d}\leq \overline{d} < d-1$}:} An extra care is needed. 
%We need to be more careful, and  use a method analogous to Subdivision scheme of type {\bf A}.

 \sk

Recall Definition \ref{def-boundary}. Consider first the cubes $Q' \in \dd_{m+1,1}Q $.

\begin{enumerate}
\medskip
  \item  If $\eta^* 2^{-(d-1)}  \mmm_{m}(Q)\geq 2^{-(m+1)\overline{d}}$,
then for any  $Q'\in \dd_{m+1,1}Q$, define $\mmm_{m+1}(Q')=\eta^*2^{-(d-1)}\mmm_{m}(Q').$
For $Q'\sse Q$, $Q'\in \cad_{m+1}$, but $Q'\not \in \dd_{m+1,1}Q$, set $\mmm_{m+1}(Q')=0.$ 
 \item  If $\eta^* 2^{-(d-1)}\cdot \mmm_{m}(Q)< 2^{-(m+1)\underline{d}}$,    consider the only cube $\widehat{Q}$ with ``maximal" smallest vertex $\mathbf{v}_{\min,\widehat{Q}}$ among
those cubes satisfying  $\widetilde{Q}\in Q$, $ \widetilde{Q}\in \dd_{m+1,1}Q$. ``Maximal" means with largest possible coordinates - this makes sense since the borders of the cube are parallel to the axes.
If it is easier to understand this way,  for such a vertex, 
 the sum of its coordinates 
$s(\mathbf{v}_{\min,\widehat{Q}})$ (defined in \eqref{sumvertex}) is maximal.
Then put    $\mmm_{m+1}(\widehat{Q})=\eta^*\mmm_{m}(Q)$.

 For the other cubes $Q'\sse Q$, $Q'\in \cad_{m+1}$,  set $\mmm_{m+1}(Q')=0.$ 
  \end{enumerate}

Next we iterate the process.
 Suppose that $m+1<n\leq \ppp(m)$, and that $\mmm_{n-1}(Q')$ is defined for all $Q'\in
\dd_{n-1,1}Q.$ 

%At the first step of our induction, that is when $n=m+1$
%instead of $\mmm_{n-1}(Q')=\mmm_{m}(Q)$ we will use $\eta^{*} \mmm_{m}(Q)$
%in the next argument so in this case $\mmm_{n-1}(Q'):=\eta^{*} \mmm_{m}(Q)$,
%instead of $\mmm_{m}(Q)$ in the
%next  three cases, but this modified value is used only for the definition $\mmm_{n}$, and
%$\mmm_{m}(Q)$ is not changed eventually. We use this modification to distribute $\eta^{*}\mmm_{m}(Q)$ measure ``onto the boundary" and will keep $(1-\eta^{*})\mmm_{m}(Q)$
%in the center $\widetilde{Q}$.
\begin{enumerate}
\medskip
\item[(${{C}}$1)] If $\mmm_{n-1}(Q')=0$  then for all subcubes $Q''\in \cad_{n}$ of $Q'$, put  $\mmm_{n}(Q'')=0.$
 \item[(${{C}}$2)] If $2^{-(d-1)}\mmm_{n-1}(Q')\geq 2^{-n\overline{d}}$
then for any $Q''\sse Q'$ with $Q''\in \dd_{n,1}Q$, we set
$\mmm_{n}(Q'')=2^{-(d-1)}\mmm_{n-1}(Q').$
For $Q''\sse Q',$ $Q''\in \cad_{n}$ but $Q''\not \in \dd_{n,1}Q$,
we set $\mmm_{n}(Q'')=0.$ 
 \item[(${{C}}$3)]  If $2^{-(d-1)}\cdot \mmm_{n-1}(Q')< 2^{-n\overline{d}}$,  then, as above, we select the cube 
  \begin{equation}\label{*III*a}
 \widehat{Q}\in Q', \  \widehat{Q}\in \dd_{n-1}Q
 \end{equation}
  with maximal $s(\mathbf{v}_{\min,\widehat{Q}})$ among those  cubes satisfying \eqref{*III*a}. Next, we set   $\mmm_{n}(\widehat{Q})=\mmm_{n-1}(Q')$.
  
 For the other cubes $Q''\sse Q',$ $Q''\in \cad_{n}$, we impose  $\mmm_{n}(Q'')=0.$ 
 \end{enumerate}

The motivation for the choice of $\widehat{Q}$ is that    $\widehat{Q}$ is located (if we look at our two-dimensional Figure \ref{mcafig1}) in the upper left "Northwest" direction from
$\mathbf{c}_{Q}$. We select $\widehat{Q}$ in the ``upper" corner of $Q'$ on the boundary of $Q$ in order to get 
as many as possible of the charged cubes into $A(x,r_x,\delta)  $ in Lemma \ref{lemcenter}. 
Recall that on Figure \ref{mcafig1} the central cube $\widetilde{Q}$ 
with smallest vertex  $\mathbf{c}_{Q}$ is also located "above" (in the direction "Northeast") from this vertex and $x$ is located in $\widetilde{Q}$.

 We repeat the above steps for $n=m+1,...,\ppp(m)$ and denote by $\dd_{n,1,+}Q$ those cubes in $\dd_{n,1}Q$ for which $\mmm_{n}(Q')>0.$
The adjustment at step $n=m+1$ implies that 
we distribute a mass of $\eta^{*}\cdot \mmm_{m}(Q)$ on the $\dd_{\ppp(m),1,+}Q$
cubes and
in this case \eqref{*m3*ab} holds as well.

\medskip

By \eqref{*m2*a-bis} initially verified by $Q$ and the first step of our induction, for $n=m$ and  $n=m+1$,   one has 
\begin{equation}\label{*IV*a}
\eta^{*}\cdot 2^{-n\overline{d}}<\mmm_{n}(Q)<c_{d}2^{d}\cdot 2^{-n\overline{d}}.
\end{equation}
Suppose that $Q'\in \dd_{n,1,+}Q$   satisfies \eqref{*IV*a}.
Consider $Q''\in \dd_{n+1,1,+}Q$ such that  $Q''\sse Q'$.

If  Step (${{C}}$2) was used to define $\mmm_{n+1}(Q'')$, then on the one hand, 
$\mmm_{n}(Q')\geq 2^{(d-1)}2^{-(n+1)\overline{d}}$, and
hence 
\begin{equation}\label{*IV*c}
\mmm_{n+1}(Q'')\geq 2^{-(n+1)\overline{d}}.
\end{equation}
On the other hand, since   $\overline{d}\leq d-1$,
one sees that 
\begin{equation}\label{*IV*b}
\mmm_{n+1}(Q'')=2^{-(d-1)}\mmm_{n}(Q')<c_{d}2^{d}\cdot 2^{-(d-1)}2^{-n \overline{d}}\leq
c_{d}2^{d}2^{-(n+1)\overline{d}}.
\end{equation}

\medskip

If  (${{C}}$3)  was used, then for the only $\widehat{Q}\in \dd_{n+1,1,+}Q$ satisfying
$\widehat{Q}\sse Q'$, one has 
\begin{equation}\label{*V*a}
\mmm_{n+1}(\widehat{Q})=\mmm_{n}(Q')<2^{(d-1)}\cdot 2^{-(n+1)\overline{d}}<c_{d}2^{d}\cdot 2^{-(n+1)\overline{d}}.
\end{equation}
On the other hand, from \eqref{*IV*a} it also follows that
\begin{equation}\label{*V*b}
\eta^{*} 2^{-(n+1)\overline{d}}<\eta^{*} 2^{-n\overline{d}}<\mmm_{n}(Q')=\mmm_{n+1}(\widehat{Q}).
\end{equation}
Thus,  by induction,
\eqref{*IV*a} holds true. Hence \eqref{*m1*a} holds for any $Q'\in \dd_{n,1,+}Q$ for any $n=m+1,...,\ppp(m).$

\subsubsection[(C2)]
{Distributing part of the mass close to the center of $Q$:}
\label{*332bis}
\

  Step \ref{*332}  of Scheme {\bf B} is also modified for Scheme {\bf C}.

If $\overline{d} =\underline{d} $, then put $\ppp'(m)=m+10$ and
   assume     that \eqref{cd1}  holds.

If $\overline{d} >\underline{d} $, then we can define $\ppp'(m)$ as in \eqref{*m3*c}.

It is not necessarily true any more that $m\leq \ppp'(m) \leq\ppp(m)$. Indeed, recall that intuitively,    $\psi'(m) \sim  m \overline{d} /\underline{d} $ and $\psi(m)\sim m\delta$, but now  $\delta >1$ can be such that $1<\delta < \overline{d} /\underline{d}$.

Hence let us introduce  $\PPP(m)=\max\{ \ppp'(m),\ppp(m) \}.$

As before, call  $\wq$   the cube of $\mathcal{D}_{\psi'(m)}$   containing the center    ${\mathbf c}_{Q}$ of $Q$.

\begin{definition}
\label{defcenterbis}
If $Q'\in \cad_{\Psi(m) }$ is such that $Q'\subset \wq \subset Q$  (where $\wq$ is the cube of $\mathcal{D}_{\psi'(m)}$   containing   ${\mathbf c}_{Q}$),  $Q'$ is called {\bf a $C$-central cube at scale $\Psi(m)$ associated to $Q\in \cad_m$}.
\end{definition}

If $\PPP(m)=\psi(m) \geq \ppp'(m)$, then we   proceed analogously to Section \ref{*332}, i.e.  we put as in \eqref{*mass2} $
\mmm_{\psi'(m)  }(\widetilde{Q} )=( 1-\eta^{*})
\mu_{m}(Q)$, and apply subdivision scheme {\bf A} to $\widetilde{Q}$ and its subcubes until generation $\psi(m)$.

\smallskip

If $\PPP(m)=\ppp'(m)$ and $\psi'(m)>\psi(m)$ then we also set   as in \eqref{*mass2} $
\mmm_{\psi'(m)  }(\widetilde{Q} )=( 1-\eta^{*})
\mu_{m}(Q)$, i.e. we concentrate all the mass that was not spread on $\partial _{m,1}Q$ onto $\widetilde{Q}$.
 But in this situation in Subsection \ref{*subsubC1}
 the measures $\mu_n$ are completely defined inside $Q$ only for $m\leq n\leq \psi(m)$.
Hence,   we apply Subdivision scheme {\bf A} to the cubes $Q'\in \partial_{\psi(m),1,+}Q$ to distribute $\mmm_{\ppp(m)}(Q')$ onto some subcubes of $Q'$
and to define $\mmm_{n}$ on $Q'$ for $\ppp(m)<n\leq \PPP(m)=\psi'(m).$

By an immediate application of Lemma \ref{*clstepaa}, since $\mmm_{\ppp(m)}(Q')$
satisfied \eqref{*m1*a},  the same inequality \eqref{*m1*a} (with $m$ replaced by $n$) remains true for all  $n\in (\ppp(m),\PPP(m)]$
for any $Q''\sse Q'$, $Q''\in \cad_{n}^{+}$.

 In particular, the following analog of Lemma \ref{lemcenter} holds.
 \begin{lemma}
\label{lemcenterbis}
If, for some large integer $m$,  $Q'\in \cad_{\Psi(m) } $ is $C$-central, then for any $x\in Q'$, 
there exists $r_{x}$ such that $P_{\mu_{\Psi(m)} }(x,r_{x},\delta,\eta)$ holds and
$$2^{-m-1}\leq r_{x}< 2^{-m-1}\cdot 1.125.$$
\end{lemma}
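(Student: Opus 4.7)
The plan is to follow the proof of Lemma \ref{lemcenter} with the same choice of $r_x := x_1 - k_1\cdot 2^{-m}$, where $Q = \prod_{i=1}^d [k_i 2^{-m}, (k_i+1) 2^{-m})\in \cad_m$ is the generation-$m$ cube with $Q' \subset \wq \subset Q$. Since $x \in \wq$ and ${\mathbf c}_Q = {\mathbf v}_{\min,\wq}$, one has $x_i - (k_i + 1/2) 2^{-m} \in [0, 2^{-\psi'(m)})$ for every $i$, so $r_x \in [2^{-m-1}, 2^{-m-1} + 2^{-\psi'(m)})$, yielding the required $r_x < 2^{-m-1}\cdot 1.125$ for $m$ large. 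As in Lemma \ref{lemcenter}, \eqref{*m2*b} together with $\delta > 1$ gives $r_x^\delta > 2^{-\psi(m)}$, so every cube $Q'' \in \partial_{\psi(m),1}Q$ that fits in $B(x, r_x)$ is automatically inside $A(x, r_x, \delta)$.

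The heart of the argument is to bound $\mu_{\Psi(m)}(A(x, r_x, \delta))$ from below by $\eta\,\mu_m(Q)$, and the two sub-cases of Scheme {\bf C} must be handled slightly differently. If $\underline{d} < d - 1 \leq \overline{d}$, then by \eqref{*m3*ab} the total mass $\eta^*\mu_m(Q)$ is uniformly distributed over the full layer $\partial_{\psi(m),1,+}Q = \partial_{\psi(m),1}Q$, so the volume/counting computation of Lemma \ref{lemcenter} transfers verbatim and yields $\mu_{\Psi(m)}(A(x, r_x, \delta)) \geq (\eta^*)^2\mu_m(Q) = \eta\,\mu_m(Q)$. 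If instead $\overline{d} < d-1$, the choice of $\widehat{Q}$ with maximal $s({\mathbf v}_{\min,\widehat{Q}})$ in (C3) concentrates the surviving boundary cubes in the ``Northwest'' corner of $Q$ from ${\mathbf c}_Q$'s viewpoint, i.e., near the projection $\widetilde{\mathbf c}_Q$ of ${\mathbf c}_Q$ onto the smallest face. A direct coordinate-wise check (using that $x_i + r_x \geq (k_i+1) 2^{-m}$ for $i \geq 2$, making the upper constraint automatic, and that $x_i - r_x \leq k_i 2^{-m} + 2^{-\psi'(m)}$ for the lower one) then shows that every cube in $\partial_{\psi(m),1,+}Q$ lies in $B(x,r_x)$, which even gives $\mu_{\Psi(m)}(A(x, r_x, \delta)) \geq \eta^*\mu_m(Q)$. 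In either sub-case, combining this with $\mu_{\Psi(m)}(B(x, r_x)) \leq \mu_m(Q)$ (the neighbor-isolation of generation-$m$ charged cubes of Remark \ref{rem2}, preserved throughout Scheme {\bf C}) yields $P_{\mu_{\Psi(m)}}(x, r_x, \delta, \eta)$.

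The main obstacle is the geometric verification in the sub-case $\overline{d} < d-1$: one must rule out that any surviving boundary cube meets the excluded slab $\bigcup_{i\geq 2}\{y: y_i \in [k_i 2^{-m}, k_i 2^{-m} + 2^{-\psi'(m)})\}$. The argument is that the (C2) threshold $2^{-(d-1)}\mu_{n-1}(Q') \geq 2^{-n\overline{d}}$ fails after only a bounded number of consecutive (C2) steps because $\overline{d} < d-1$; once (C3) becomes active it never reverts (again because $\overline{d} < d-1$), and each (C3) step pushes each coordinate $y_i$ into the upper half of its parent, so the final surviving coordinates stay well above $k_i 2^{-m} + 2^{-\psi'(m)}$ provided \eqref{eqpsi''} holds and $c_d$ is calibrated relative to $(d-1-\overline{d})^{-1}$. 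The ``staircase'' structure of the support thereby fits inside the portion of the smallest face that $B(x,r_x)$ captures.
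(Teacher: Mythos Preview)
Your overall plan and your treatment of the first sub-case ($\underline d < d-1 \le \overline d$) are fine and match the paper. The gap is in the second sub-case ($\overline d < d-1$): the assertion ``once (C3) becomes active it never reverts'' is false. After a (C3) step at level $n$ one has $\mu_n(\widehat Q)=\mu_{n-1}(Q')$ with $\mu_{n-1}(Q')<2^{d-1}2^{-n\overline d}$, but the (C2) threshold at level $n+1$ is $2^{d-1}2^{-(n+1)\overline d}=2^{-\overline d}\cdot 2^{d-1}2^{-n\overline d}$, which is \emph{strictly smaller}. Hence whenever $\mu_{n-1}(Q')\in[2^{d-1}2^{-(n+1)\overline d},\,2^{d-1}2^{-n\overline d})$, (C3) at step $n$ is followed by (C2) at step $n+1$; the process can genuinely oscillate between (C2) and (C3). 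The parenthetical ``again because $\overline d<d-1$'' does not help here: the relevant comparison is between two thresholds at consecutive levels, and that comparison is governed by $2^{-\overline d}$, not by $d-1-\overline d$. Your calibration remark about $c_d$ also points the wrong way: enlarging $c_d$ only \emph{increases} the possible number of initial (C2) steps.

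Your stronger conclusion (that \emph{all} cubes of $\partial_{\psi(m),1,+}Q$ lie in $B(x,r_x)$) is in fact salvageable by a different argument: since at most $K_0\lesssim (d-1-\overline d)^{-1}\log_2(\eta^* c_d 2^d)$ consecutive (C2) steps can occur, a (C3) step must appear at some level $n_0\le m+K_0+1$, and that single step alone forces every charged coordinate to satisfy $y_i\ge k_i2^{-m}+2^{-n_0}\ge k_i2^{-m}+2^{-m-K_0-1}$, which dominates $k_i2^{-m}+2^{-\psi'(m)}$ once $\psi'(m)-m>K_0+1$. But note this last inequality is delicate when $\overline d=\underline d$ (then $\psi'(m)=m+10$), so extra care is needed there. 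The paper sidesteps all of this: it does not try to capture every charged cube, only a proportion $>\eta^*$ of them. Because the (C3) selection always takes the child with maximal coordinates, the set of charged positions in each face coordinate is obtained from the full interval by deleting only \emph{lower} halves at certain scales; consequently the fraction of charged cubes falling in the low-coordinate slab is no larger than in the uniform case, and the Lemma~\ref{lemcenter} count $(1-2^{-(\psi'(m)-m)+1})^{d-1}d_{\Psi(m),1,+}(Q)>\eta^* d_{\Psi(m),1,+}(Q)$ carries over directly, yielding $\mu(A(x,r_x,\delta))>(\eta^*)^2\mu_m(Q)=\eta\,\mu_m(Q)$.
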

\begin{proof}
We discuss only the changes    in the argument  of the original proof of Lemma \ref{lemcenter} when $\psi(m)<\Psi(m)=\psi'(m)$.

In this new  situation, $A(x,r_x,\delta) $ contains more than $$(1-2^{-(\ppp'(m)-m)+1})^{d-1}d_{\Psi(m),1}(Q)>\eta^{*} d_{\Psi(m),1}(Q)$$
many cubes from $\partial_{\Psi(m),1}Q$. Now even if we had to use Step (${{C}}$3), 
our choice of
the cube with maximum $s(\mathbf{v}_{\min,\widehat{Q}})$ after 
\eqref{*III*a}
implies that
$$(1-2^{-(\ppp'(m)-m)+1})^{d-1}d_{\Psi(m),1,+}(Q)>\eta^{*} d_{\Psi(m),1,+}(Q)$$
holds in this case as well. 
This finally yields
$$\mmm(A(x,r_x,\delta) )>\eta^{*}\cdot \eta^{*} \mu_m(Q)>\hhh\cdot \mu_m(Q).$$
\end{proof}

  \subsubsection{Giving a zero-mass to the other cubes, and defining the measure $\mu_{\Psi(m)}$ on $Q$}\label{part333bis}
 
 The measure $\mu_{\Psi(m)}$ is extended inside $Q$ as $\mu_{\Psi(m)}$  in   subsection \ref{part333}.

\subsubsection{Defining  inside $Q$ the measure $\mu_n$ for $m<n<\Psi(m)$}\label{part334bis}
\
 
 The measures $\mu_n$ are also defined as in Subsection \ref{part334}.  
 
 \sk
 
\begin{lemma}\label{*clstepab2bis} 
Assume that $\mu_{m}$ satisfies  \eqref{*m2*a-bis} for some $Q\in \cad _{m}$, and apply the subdivision scheme {\bf C} to define $\mu_{m+1},...,\mu_{\Psi(m)}$ on the subcubes  of $  Q$ of generation $m+1,...,\Psi(m)$.

Then:
\begin{enumerate}
\item
 for every $n\in \{m,..., \Psi(m)\}$, for every $Q'\in \cad_{n} $ such that $Q'\sse Q$ and $\mu_n(Q')\neq 0$,  \eqref{*m1*a} holds  for $Q'$ with  the measure $\mu_{n}$.
\item
for every cube $Q'\in \cad_{\Psi(m)} $ such that  $Q'\subset Q$ and $\mu_{\Psi(m)}(Q')\neq 0$, there exists $n\in \{m,...,\Psi(m)\}$ and a (unique) cube $Q_n\in \cad_n$ such that $Q'\subset Q_n\subset Q$ and  \eqref{eqm'} holds for  $\mu_n $ and $Q_n$.
\end{enumerate}
\end{lemma}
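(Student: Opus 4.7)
The proof parallels that of Lemma \ref{*clstepab2}, relying on the estimates already accumulated in Subsections \ref{*subsubC1} and \ref{*332bis}, and organizing the verification by the location of $Q'$ inside $Q$. For part (i), the plan is to partition the cubes $Q'\subset Q$ with $\mu_n(Q')\neq 0$ into three families: the boundary cubes $Q'\in \partial_{n,1,+}Q$ at generations $m\leq n\leq \psi(m)$; their descendants produced by applying Subdivision scheme {\bf A} between generations $\psi(m)$ and $\Psi(m)$ in the case $\Psi(m)=\psi'(m)>\psi(m)$; and the cubes lying inside the central cube $\widetilde{Q}$, namely its $\cad_n$-ancestors for $m<n<\psi'(m)$, the cube $\widetilde{Q}$ itself, and its scheme-{\bf A} descendants up to generation $\Psi(m)$.

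For the boundary family I would quote directly the inequalities derived in Subsection \ref{*subsubC1}: in the case $\underline{d}<d-1\leq \overline{d}$ the bounds \eqref{*I*a} and \eqref{*II*a} give exactly \eqref{*m1*a}, while in the case $\underline{d}\leq \overline{d}<d-1$ the same conclusion follows from the induction that produced \eqref{*IV*a}, \eqref{*IV*c}--\eqref{*IV*b}, and \eqref{*V*a}--\eqref{*V*b}. The cubes $Q'\in \partial_{\psi(m),1,+}Q$ moreover satisfy the double inequality \eqref{eqm'} (as the same estimates with $n=\psi(m)$ show), so iterated application of Lemma \ref{*clstepaa} propagates \eqref{*m1*a} to all their scheme-{\bf A} descendants. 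For the central family, every $\cad_n$-ancestor of $\widetilde{Q}$ with $m<n<\psi'(m)$ carries the full mass $(1-\eta^*)\mu_m(Q)$, and the computations that produced \eqref{*mQta}--\eqref{*mQtb} extend verbatim (using $\psi'(m)=m+10$ when $\underline{d}=\overline{d}$). The cube $\widetilde{Q}$ itself satisfies the analog of \eqref{*mm4*c}, after which Lemma \ref{*clstepaa} guarantees that \eqref{*m1*a} survives all further scheme-{\bf A} subdivisions inside $\widetilde{Q}$.

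For part (ii), given a nonzero cube $Q'\in \cad_{\Psi(m)}$ with $Q'\subset Q$, I would select the distinguished ancestor $Q_n$ as follows: if $Q'$ descends from a cube on the smallest face, take $Q_n$ to be its ancestor in $\partial_{\psi(m),1,+}Q$ at generation $n=\psi(m)$, for which \eqref{eqm'} was already established; if $Q'$ lies inside the central region, take $Q_n=\widetilde{Q}$ at generation $n=\psi'(m)$ and invoke the analog of \eqref{*mm4*c}. The main obstacle, and the genuine novelty compared with Subsection \ref{part334}, is the new possibility $\Psi(m)=\psi'(m)>\psi(m)$ allowed whenever $1<\delta<\overline{d}/\underline{d}$: in this regime Subdivision scheme {\bf A} has to be launched both on the boundary cubes (from generation $\psi(m)$ onwards) and on $\widetilde{Q}$ (from generation $\psi'(m)$ onwards) in overlapping ranges of generations, so the bookkeeping of the starting generation at which \eqref{*m1*a} first holds for each individual cube must be done carefully. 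Once this is managed, both parts of the lemma follow uniformly from Lemma \ref{*clstepaa} and the estimates of Subsections \ref{*subsubC1}--\ref{*332bis}.
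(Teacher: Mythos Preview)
Your handling of part (i) is correct and matches the paper's intended argument (the paper itself gives no details beyond ``similar to Lemma \ref{*clstepab2}''). The three-family decomposition and the appeals to \eqref{*I*a}--\eqref{*II*a}, \eqref{*IV*a}--\eqref{*V*b}, \eqref{*mQta}--\eqref{*mQtb}, and Lemma \ref{*clstepaa} are exactly what is needed.

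There is, however, a genuine gap in your treatment of part (ii). You assert that the boundary cubes $Q'\in\partial_{\psi(m),1,+}Q$ satisfy \eqref{eqm'}, i.e.\ the two-sided $\underline{d}$-bound $c_d^{-2}2^{-\psi(m)\underline{d}}\le \mu_{\psi(m)}(Q')\le c_d^{2}2^{-\psi(m)\underline{d}}$. But the estimates in Subsection \ref{*subsubC1} do not give this: in the case $\underline{d}<d-1\le\overline{d}$, inequality \eqref{*II*a} only yields the lower bound $c_d^{-2}2^{-\psi(m)\overline{d}}$, and in the case $\underline{d}\le\overline{d}<d-1$, \eqref{*IV*a} pins $\mu_{\psi(m)}(Q')$ near $2^{-\psi(m)\overline{d}}$. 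When $\overline{d}>\underline{d}$ and $m$ is large, neither gives the lower half of \eqref{eqm'}. This is a real difference from scheme {\bf B}: there the relation \eqref{*ab*a} governing $\psi(m)$ was exactly what forced \eqref{ineg11}--\eqref{ineg11p} to land at the $\underline{d}$-scale, and that relation is absent in scheme {\bf C}. Moreover, no other ancestor between generations $m$ and $\Psi(m)$ rescues the claim, since the boundary chain carries masses of order $2^{-n\overline{d}}$ throughout and subsequent scheme-{\bf A} steps push the local exponent toward $d$, away from $\underline{d}$.

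In practice this does not damage the construction: Section \ref{secconstructionbis} obtains the $\underline{d}$-scale sequence $(\underline{j}_n(x))$ directly from \eqref{*mm4*c} for the central cube together with Proposition \ref{*clcentrbis}, so only the central-descendant case of (ii) is actually used. But your proposed ancestor for boundary descendants does not satisfy \eqref{eqm'}, and part (ii) as literally stated appears not to hold for those cubes.
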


\begin{proof} 
The proof is   similar to that of Lemma \ref{*clstepab2}, up to some minor modifications that are left to the reader.
% 
%Observe first that Lemma \ref{*clstepaa} holds in this case as well, and when $\psi'(m)\leq \psi(m)$, there is nothing to change.
%
%\smallskip
%
%Assume that $\psi'(m)>\psi(m)$.
%
%
%When $m<n<\ppp'(m)$, $Q'\sse Q$, $Q'\in \cad_{n}^{+}$
%and $Q'\cap \widetilde{Q}\not= \ess$, the inequalities  \eqref{*mQta} and \eqref{*mQtb} are obtained as before. 
%
%When   $\ppp'(m)<\PPP(m)$ then we can again use Subdivision scheme {\bf A}
%to define measures $\mmm_{n}$ for subcubes of $\widetilde{Q}$ and Lemma \ref{*clstepaa}
%can be used for them to verify \eqref{*m1*a}. 
%
%Now suppose that $m\leq n\leq \ppp(m).$
%Then for any $Q'\in \dd_{n,1,+}Q$ either
%\eqref{*I*a} and \eqref{*II*a}, or
%\eqref{*IV*b}, \eqref{*IV*c}, \eqref{*V*a} and \eqref{*V*b}
%(in these inequalities we have $Q''$ or $\widehat{Q}$ instead of $Q'$)
%imply that \eqref{*m1*a} holds.
%
%If $\PPP(m)>\ppp(m)$ then for $\ppp(m)<n\leq \PPP(m)$
%we can use Lemma \ref{*clstepaa} to show that for any $Q'\in \partial_{\psi(m),1,+}Q$
%and any subcube $Q''\sse Q'$, $Q''\in \cad_{n}^{+}$ we have 
%\eqref{*m1*a}.
\end{proof}

\subsection{Construction of the measure of Theorem \ref{mainth2}.} \label{secconstructionbis}\

The measure $\mu$ is built exactly as in Section \ref{secconstruction}.

Proposition \ref{*clcentr} 
can be proved in this case as well.

\begin{proposition}\label{*clcentrbis}
For $\mmm$-almost every $x$, there exist infinitely many  integers $n$
such that $Q_{2n}(x)$ is a  $C$-central cube at Step $m_{2n}$.
\end{proposition}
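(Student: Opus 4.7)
The plan is to mimic the proof of Proposition \ref{*clcentr} essentially verbatim, replacing the role of $B$-central cubes by $C$-central cubes, and checking that the two ingredients (correct mass on central cubes, independence across scales) still hold in the construction of Section \ref{proof3}.

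First I would define, for each $n\geq 1$,
\[
\mathcal{A}_n=\left\{x\in \mathcal{C}:
\begin{array}{l} Q_{\Psi(m_{2n-1})}(x)\ \text{is a $C$-central cube at generation}\\
\Psi(m_{2n-1})\ \text{associated with some}\ Q\in \cad_{m_{2n-1}}\end{array}\right\},
\]
so that belonging to $\mathcal{A}_n$ is exactly the condition of the proposition at level $n$. The first step is to show that $\mu(\mathcal{A}_n)=1-\eta^*$. By the construction of Subdivision scheme \textbf{C} (Subsection \ref{*subsubC1} together with Subsection \ref{*332bis}), inside every cube $Q\in \cad_{m_{2n-1}}^+$, the fraction of the mass $\mu_{m_{2n-1}}(Q)$ assigned to the border cubes $\partial_{\psi(m_{2n-1}),1,+}Q$ is exactly $\eta^*$, while the remaining $(1-\eta^*)\mu_{m_{2n-1}}(Q)$ is placed on the central cube $\widetilde{Q}$ via \eqref{*mass2} and then redistributed inside $\widetilde{Q}$ by scheme \textbf{A}. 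Hence the union of $C$-central cubes associated with $Q$ carries precisely the mass $(1-\eta^*)\mu_{m_{2n-1}}(Q)$. Summing over all cubes $Q\in \cad_{m_{2n-1}}^+$ and using $\mu(\mathcal{C})=1$, I get $\mu(\mathcal{A}_n)=1-\eta^*$.

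Next I would argue that the events $(\mathcal{A}_n)_{n\geq 1}$ are $\mu$-independent. This follows from the inductive and scale-uniform nature of the construction: once $\mu_{m_{2n-1}}$ is fixed on $\cad_{m_{2n-1}}$, scheme \textbf{C} acts separately inside each cube $Q\in \cad_{m_{2n-1}}^+$, splitting its mass into a fraction $1-\eta^*$ placed on $\widetilde{Q}\subset Q$ and a fraction $\eta^*$ placed on the smallest face of $Q$; the subsequent behaviour inside $\widetilde{Q}$ determines, at step $2n+2$, whether the descendants are again $C$-central, and the corresponding fraction is again $1-\eta^*$ regardless of the history. Formally, for any finite set of indices $n_1<n_2<\cdots<n_p$, conditional on being in a fixed $Q\in \cad_{m_{2n_p-1}}^+$ that belongs to $\mathcal{A}_{n_1}\cap\dots\cap\mathcal{A}_{n_{p-1}}$, the conditional $\mu$-probability of $\mathcal{A}_{n_p}$ equals $1-\eta^*$. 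Iterating,
\[
\mu(\mathcal{A}_{n_1}\cap\cdots\cap\mathcal{A}_{n_p})=(1-\eta^*)^p.
\]

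Finally, the second Borel--Cantelli lemma (see e.g.\ \cite{Grimmett01}) applied to the independent sequence $(\mathcal{A}_n)$ gives $\mu(\limsup_n \mathcal{A}_n)=1$, which is the statement of the proposition.

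The only step that requires some care (and is the main place where the proof could go wrong) is the verification that the central mass assigned by scheme \textbf{C} is still exactly $1-\eta^*$ of the parent mass, in all the sub-cases $\underline{d}<d-1\leq\overline{d}$ and $\underline{d}\leq\overline{d}<d-1$, and both when $\Psi(m)=\psi(m)$ and $\Psi(m)=\psi'(m)$. In each of those branches one needs to trace through \eqref{*m3*ab}, the iterative steps $({C}1)$--$({C}3)$ (which together place $\eta^*\mu_m(Q)$ on the border), and the assignment $\mu_{\psi'(m)}(\widetilde{Q})=(1-\eta^*)\mu_m(Q)$, so that conservation of mass forces all intermediate cubes not in these two regions to receive zero mass; once this bookkeeping is done, independence and Borel--Cantelli finish the proof exactly as in Proposition \ref{*clcentr}.
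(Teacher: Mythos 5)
Your proposal is correct and follows essentially the same route as the paper, which itself just states that Proposition \ref{*clcentr} is proved analogously: you define the events $\mathcal{A}_n$ via $C$-central cubes at generation $\Psi(m_{2n-1})=m_{2n}$, check that scheme \textbf{C} still assigns exactly $(1-\eta^*)\mu_{m_{2n-1}}(Q)$ to the central region in all sub-cases, argue $\mu$-independence from the scale-uniform structure, and close with the second Borel--Cantelli lemma. The bookkeeping you flag as the delicate point is indeed the only thing that needs checking relative to the $B$-scheme argument, and it holds by \eqref{*m3*ab}, steps (C1)--(C3), and \eqref{*mass2}.
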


The conclusions and the arguments are similar to those developed in Section \ref{secconstruction}, we only sketch the proof to get Theorem \ref{mainth2}

\medskip

As a consequence of Proposition \ref{*clcentrbis},   $\mmm$-a.e. $x$ belongs to a $C$-central cube infinitely often, and  \eqref{*mm4*c} holds infinitely often.
For such an $x$,  there exists an increasing sequence of integers  $(\underline{j}_n(x))_{n\geq 1}$  such that the following version 
of \eqref{encadrement3} holds
\begin{equation}\label{encadrement3b}
c_{d}^{-2} 2^{-\underline{j}_n(x)\underline{d}}\leq \mmm_{\underline{j}_n(x)}(Q)<c_{d}^{2}2^{-\underline{j}_n(x)\underline{d}}.
\end{equation}
By Lemma \ref{*clstepaa} and the construction, there exists another increasing  sequence of integers  $(\overline{j}_n(x))_{n\geq 1}$
satisfying \eqref{encadrement2}.

Then part (i) of Lemma \ref{*clstepab2bis} yields   the dimension estimate \eqref{encadrement1}, which concludes the proof.

%%%%%%%%%%%%%%%%%%%%%%%%%%%%%%%%%%
%%%%%%%%%%%%%%%%%%%%%%%%%%%%%%%%%%
%%%%%%%%%%%%%%%%%%%%%%%%%%%%%%%%%%
%%%%%%%%%%%%%%%%%%%%%%%%%%%%%%%%%%
%%%%%%%%%%%%%%%%%%%%%%%%%%%%%%%%%%
%%%%%%%%%%%%%%%%%%%%%%%%%%%%%%%%%%
%%%%%%%%%%%%%%%%%%%%%%%%%%%%%%%%%%
%%%%%%%%%%%%%%%%%%%%%%%%%%%%%%%%%%
%%%%%%%%%%%%%%%%%%%%%%%%%%%%%%%%%%
%%%%%%%%%%%%%%%%%%%%%%%%%%%%%%%%%%
%%%%%%%%%%%%%%%%%%%%%%%%%%%%%%%%%%
%%%%%%%%%%%%%%%%%%%%%%%%%%%%%%%%%%

 \section{Lemma \ref{lemmcor} and the proof of Theorem \ref{mainth4}}\label{proof4}

%%%%%%%%%%%%%%%%%%%%%%%%%%%%%%%%%%
%%%%%%%%%%%%%%%%%%%%%%%%%%%%%%%%%%
%%%%%%%%%%%%%%%%%%%%%%%%%%%%%%%%%%
%%%%%%%%%%%%%%%%%%%%%%%%%%%%%%%%%%
%%%%%%%%%%%%%%%%%%%%%%%%%%%%%%%%%%
%%%%%%%%%%%%%%%%%%%%%%%%%%%%%%%%%%
%%%%%%%%%%%%%%%%%%%%%%%%%%%%%%%%%%
%%%%%%%%%%%%%%%%%%%%%%%%%%%%%%%%%%
\subsection{Intersection of thin Euclidean annuli}

The idea of the proof of Theorem \ref{mainth4} is based on the observation that if
balls (in the $2$-dimensional  case, disks) of comparable radii $\sim 2^{-n}$ are centered not too close  (in the next lemma, the distance
between their centres  is at least $2^{-5n}$), then the annuli corresponding to these balls
are intersecting each other in a set of small diameter, see Figure \ref{mcafig2}.  This follows from  the   strictly convex  shape of the corresponding balls.
It is illustrated by Lemma \ref{lemmcor} below, which prevents   that measures with different upper and lower dimensions charge   thin annuli.

%We use $\ddd=30$
%in our lemma. This means that the width of the annuli intersecting is $\sim 2^{-30 n}$,
%and our upper estimate of the Lebesgue measure of the intersection of the annuli is $2^{-64 n}$.
%Using information about the dimension of $\mmm$ the Lebesgue measure estimate
%in the proof of Theorem \ref{mainth4} will be translated into a $\mmm$ measure
% estimate and will be used for a small $\mmm$-correlation
% (much better than independence)
%estimate in \eqref{*th4*13*c}. 
%It will turn out that in some balls $B(x^{*},2^{-n})$ due to our assumptions about  the dimension 
%of $\mmm$ there should be many balls and annuli satisfying the assumptions
%about the distance of the centers in Lemma \ref{lemmcor} and also quite a bit of
%$\mmm$ measure near the center $x^{*}$. This combined with the small $\mmm$-correlation
%of the annuli will yield that the $\mmm$-measure of the ball $B(x^{*},4\cdot 2^{-n})$
%is way too much as we will see in \eqref{*th4*14*a}.
%As Figure \ref{mcafig1} shows in case of norms in which balls are bounded by lines/hyperplanes many annuli, with centers realtively far can overlap in sets of large measure
%so the next lemma is the essence of the difference between the Euclidean norm and the ``cubic" ones.
%

\begin{lemma}\label{lemmcor}
There exists an integer  $N_{corr}$   such that  if $n\geq N_{corr}$ then for every  $2^{-n-1}\leq r_{1},r_{2}\leq 2^{-n}$ and 
\begin{equation}\label{*eqlemcorb}
2^{-5n}\leq ||{z_{1}-z_{2}}||_{2}\leq 2^{-n}/30 ,
\end{equation}
 $ {A}(z_1,r_{1},30 )  \cap {A}(z_2,r_{2},30)$ consists of at most two connex  sets, each of them is of diameter, $D^{*}_{corr,n}$ less than $24\cdot 2^{-13.5n}$.
%, and 
%\begin{equation}\label{*eqlemcor}
%\lll \big ( {A}(x_1,r_{1},30 )  \cap {A}(x_2,r_{2},30) \big )< 2^{-64n}.
%\end{equation} 
\end{lemma}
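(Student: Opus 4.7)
The plan is to place orthonormal coordinates with $z_1=(-d/2,0)$ and $z_2=(d/2,0)$, where $d:=\|z_1-z_2\|_2$. Both annuli, hence their intersection, are invariant under the reflection $(x,y)\mapsto(x,-y)$. The map $(x,y)\mapsto(u,v):=(\|P-z_1\|,\|P-z_2\|)$ is two-to-one onto its image in the convex feasible set $\{(u,v):u+v\geq d,\ |u-v|\leq d\}$ (the two sheets differing by $\mathrm{sgn}(y)$). Since a convex set is connected, the intersection has at most two connected components, each confined to a closed half-plane, possibly merging across $y=0$. This settles the ``at most two components'' part.

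For the diameter, I would use the identities (Heron's formula applied to the triangle $z_1 z_2 P$)
\[
x=\frac{u^2-v^2}{2d},\qquad y^2=\frac{((u+v)^2-d^2)(d^2-(u-v)^2)}{4d^2}.
\]
Setting $p:=u-v$ and $q:=u+v$, the annulus constraints $u\in[r_1-r_1^{30},r_1]$ and $v\in[r_2-r_2^{30},r_2]$ force each of $p,q$ to vary in an interval of length $\leq r_1^{30}+r_2^{30}\leq 2\cdot 2^{-30n}$. From $x=pq/(2d)$ together with $|p|\leq d\leq 2^{-n}/30$, $q\leq 2\cdot 2^{-n}$, and $d\geq 2^{-5n}$, the $x$-range of the intersection is $O(2^{-26n})$, which is negligible against the target $2^{-13.5n}$.

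The heart of the proof is the $y$-variation on a single component (say $y\geq 0$). The inequality $y^2\leq q^2(d-|p|)/(2d)$ (from $d^2-p^2\leq 2d(d-|p|)$) relates $y_{\max}$ to $d-|p|_{\min}$, and I would split into two cases. In Case~(A), $y_{\max}\leq 3\cdot 2^{-13.5n}$, so trivially $|\Delta y|\leq y_{\max}\leq 3\cdot 2^{-13.5n}$. In Case~(B), $y_{\max}>3\cdot 2^{-13.5n}$, and I bound $|\Delta(y^2)|$ by the partial derivatives $|\partial_q y^2|\leq q/2\leq 2^{-n}$ and $|\partial_p y^2|\leq|p|q^2/(2d^2)\leq 2r^2/d$, obtaining
\[
|\Delta(y^2)|\leq 2\cdot 2^{-n}\cdot 2\cdot 2^{-30n}+\frac{2r^2}{d}\cdot 2\cdot 2^{-30n}=O(r^{32}/d)=O(2^{-27n}),
\]
whence $|\Delta y|=|\Delta(y^2)|/(y_{\max}+y_{\min})\leq O(2^{-27n})/(3\cdot 2^{-13.5n})=O(2^{-13.5n})$. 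In both cases the diameter is at most $|\Delta x|+|\Delta y|<24\cdot 2^{-13.5n}$ for $n$ large enough, the true constant being around $4$.

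The main obstacle is the tangential configuration captured by Case~(A), where $|r_1-r_2|$ is close to $d$ and the two circles meet almost tangentially. The exponent $-13.5n$ is dictated precisely by this regime: balancing the annular thickness $r^{30}\sim 2^{-30n}$ against the quadratic separation rate $\sim y^2 d/r^2$ of two internally tangent circles (a manifestation of curvature $1/r$ of Euclidean balls), subject to $d\geq 2^{-5n}$, gives $y^2\lesssim r^{32}/d$, i.e.\ $y\lesssim r^{16}/\sqrt{d}\leq 2^{-13.5n}$. This square-root improvement is a direct consequence of strict Euclidean convexity and has no analogue for polyhedral balls as in Theorem~\ref{mainth3}, which is precisely the qualitative source of the discrepancy with Theorem~\ref{mainth4}.
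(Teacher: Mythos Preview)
Your proof is correct and reaches the same optimal order $2^{-13.5n}$, but by a genuinely different route from the paper.

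The paper aligns coordinates so that $z_1,z_2$ lie on the $y$-axis, treats separately the near-tangential case where the whole intersection lies in a thin vertical strip, and otherwise parametrizes the four boundary arcs of the lens $\mathcal C_R$ by radius, bounding $(x^i)'(r)$ and $(y^i)'(r)$ via implicit differentiation of the circle equations. The diameter bound then comes from integrating these derivative bounds over $[r_j-r_j^{30},r_j]$.

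Your approach is more algebraic: passing to the bipolar coordinates $(u,v)=(\|P-z_1\|,\|P-z_2\|)$, you observe that the annular constraints become the \emph{rectangle} $[r_1-r_1^{30},r_1]\times[r_2-r_2^{30},r_2]$, intersected with the convex triangle-inequality region. This gives a cleaner justification of ``at most two components'' (the paper essentially takes this for granted), and the Heron identities $x=pq/(2d)$, $4d^2y^2=(q^2-d^2)(d^2-p^2)$ reduce the diameter bound to an elementary gradient estimate over a convex domain. Your Case~(A)/Case~(B) split according to whether $y_{\max}\lesssim 2^{-13.5n}$ plays exactly the role of the paper's ``strip case'' \eqref{*supstrip}, and the balance $y^2\lesssim r^2(d-|p|)/d$ pinpoints why the threshold is $2^{-13.5n}$. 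One small point worth making explicit: your mean-value bound on $\Delta(y^2)$ needs the $(p,q)$-domain to be convex so that the straight segment stays in it, which you have since it is the linear image of the intersection of two convex sets; and the partial-derivative bounds $|\partial_q y^2|\le q/2$, $|\partial_p y^2|\le q^2/(2d)$ hold uniformly there because $|p|\le d$ and $q\le 2\cdot 2^{-n}$ throughout.

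In short: the paper's method is an arc-by-arc differential analysis, yours is a global coordinate change plus a gradient bound. Yours is tidier for the connectivity statement and avoids the implicit function theorem; the paper's is closer in spirit to the Marstrand/Wolff tangency lemmas it cites. Both give constants well inside $24$.
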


\begin{comment}
\begin{figure}
%\includegraphics[scale=0.1]{mca2fig}
\centering{
\resizebox{1\textwidth}{!}{\input{mcafig2.tex}}}
\caption{Positions of intersecting annuli.}
\label{mcafig2}
\end{figure}
\end{comment}

%\begin{comment}
\begin{figure}
\centering{
\includegraphics[width=1\textwidth]{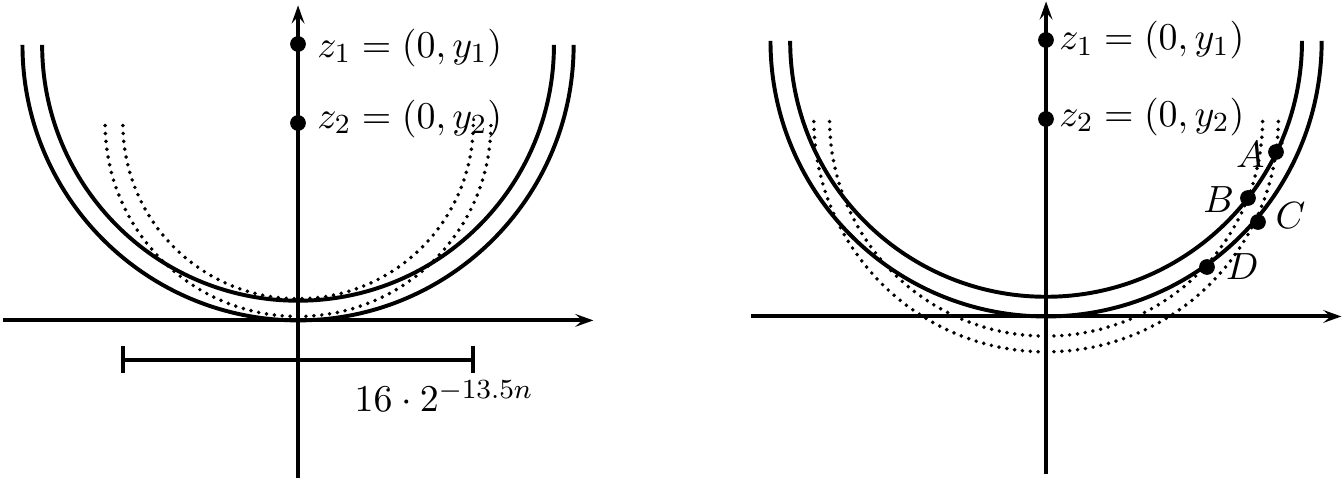}}
%\resizebox{1\textwidth}{!}{\input{mcafig2.tex}}}
\caption{Positions of intersecting annuli.}
\label{mcafig2}
\end{figure}

%This lemma is actually a direct application of Lemma 3.1 of Wolff's survey \cite{WolffKakeyasur} (other estimates can be found in \cite{Boumax} and \cite{Marcirc}).

This type of estimations appear at several places. For example from Lemma 3.1 of Wolff's survey \cite{WolffKakeyasur} one gets an estimate of $D^{*}_{corr,n}$ of the form $C_{W}2^{-12.5n}$ with a constant $C_{W}$
not depending on $n$. The order of this estimate is slightly smaller than ours.

We point out that the order $2^{-13.5n}$ is optimal. Indeed, taking
$$\car= {A}((0,2^{-n}),2^{-n},30 )  \cap {A}((0,2^{-n}-2^{-5n}),2^{-n}-2^{-5n},30),$$
 then one can verify that the diameter of $\car$ can be estimated from below by $C\cdot 2^{-13.5n}$ with a constant not depending on
$n$. For this, considering a triangle with sides $a=2^{-n}-2^{-5n}$, $b=2^{-5n}$ and
$c=2^{-n}-2^{-30n}$, it is easily seen that half of the diameter of $\car$ is larger than the altitude  $m_{b}$ of the triangle perpendicular to the side $b$.
Using Heron's formula the area of the triangle is $A=\sqrt {s(s-a)(s-b)(s-c)}$ with
$s=(a+b+c)/2$ and $m_{b}=2A/b$. Plugging in the above constants, one obtains the announced estimate. 

Since the notation of Lemma 3.1 of  \cite{WolffKakeyasur} is different from ours, we detail a bit the way one can obtain an estimate for $D^{*}_{corr,n}$ by using that lemma.
Let $d_{W}=|z_{1}-z_{2}|+|r_{1}-r_{2}|$ and $\DDD_{W}=||z_{1}-z_{2}|-|r_{1}-r_{2}||$.
We can suppose that $r_{1}\geq r_2$, and the assumptions  of Lemma \ref{lemmcor} 
imply that $r_{2}\geq r_{1}/2$. The argument of Lemma 3.1 of  \cite{WolffKakeyasur}
gives an estimate
$$D^{*}_{corr,n}\leq C_{W}'\frac{r_{1}^{-30n}}{\sqrt{(r_{1}^{-30n}+\DDD_{W})(r_{1}^{-30n}+d_{W})}}\text{with $C'_{W}$ independent of $n$.}$$
The assumptions of Lemma \ref{lemmcor} imply that $0\leq \DDD_{W}$ and $2^{-5n}\leq d_{W}$, and these inequalities cannot be improved.
Hence the above estimate implies
$$D^{*}_{corr,n}\leq C^{*}_{W}r_1^{-30n}r_{1}^{15n}r_{1}^{2.5 n}=C_{W}r_{1}^{-12.5n}\leq C_{W}2^{-12.5n}.$$

A similar order estimate can be obtained from Marstrand's paper \cite{Marcirc}.
In \cite{Boumax} a similar type of question is studied but it is less straightforward 
which order one can obtain for $D^{*}_{corr,n}$.

\medskip

We now   turn to the main result and prove Theorem \ref{mainth4}. Finally,  Lemma \ref{lemmcor} is proved in Section \ref{sseclemcor}.

%%%%%%%%%%%%%%%%%%%%%%%%%%%%%%%%%%
%%%%%%%%%%%%%%%%%%%%%%%%%%%%%%%%%%
%%%%%%%%%%%%%%%%%%%%%%%%%%%%%%%%%%
%%%%%%%%%%%%%%%%%%%%%%%%%%%%%%%%%%
%%%%%%%%%%%%%%%%%%%%%%%%%%%%%%%%%%
%%%%%%%%%%%%%%%%%%%%%%%%%%%%%%%%%%
%%%%%%%%%%%%%%%%%%%%%%%%%%%%%%%%%%
%%%%%%%%%%%%%%%%%%%%%%%%%%%%%%%%%%
\subsection{Proof of Theorem \ref{mainth4}}
 
Without limiting generality we can suppose that the Borel probability  measure $\mmm$
is supported on $[0,1]^{2}$.

Proceeding towards a contradiction, suppose that $\mmm(E_\mu(30,\eta))>0$.

For ease of notation, let $ {E}=E_\mu(30,\eta)$.
Since  $ {E}$ is fixed  in the rest of the proof, $\mmm( {E})$ will be regarded as a positive constant.

Since  ${\overline{\mathrm{dim}}\, } \mmm=\overline{d} \in [0.89,2]$ and  ${\underline{\mathrm{dim}}\, } \mmm=\underline{d}\in [0.89, {\overline{d}}] $, for $\mmm$-a.e. $x\in  {E}$ there exist
$\rho_x >0 $  such that 
 \begin{equation}
 \label{*th4*3*a}
\mbox{for any $0<r\leq \rho_x$, } \ \ \  r^{\overline{d}+0.01 }\leq  \mmm(B(x,r))\leq r^{{\underline{d }} -0.01} \leq r^{0.88}.
\end{equation}
For those $x$s for which $\rho_x$, as defined above, does not exist, we set $\rho_x =0$.

We can also suppose that for each $x$ we select $\rho_x$ as half of the 
supremum of those $\rho$s for which \eqref{*th4*3*a} holds with
$\rho$ instead of $\rho_x$.
This way it is not too difficult to see that the mapping $\rho : x\in \zu^2 \mapsto \rho_x \in  \R^+$ is  
Borel $\mmm$-measurable.

\medskip

The first step consists of finding a ball containing points of $E$ with a very precisely controlled behavior, see Lemma \ref{lem-ball}
 below.
To prove it, let us start with    a simple technical lemma.

%%%%%%%%%%%%%%%%%%%%%%%%%%%%%%%%%%
\begin{lemma}\label{*lemdp}
Suppose $\widetilde{E}\subset \R^2 $ is a $\mmm$-measurable set, and let  $\rho :\R^2\to \R$,  $x\mapsto \rho_x$ be a $\mmm$-measurable function such that $\mu(\{x \in \widetilde{E}: \rho_x>0\}) =\mu(\widetilde{E})$.  

Then for any $0<\ggg<1$, for $\mmm$-a.e. $x\in \widetilde{E}$, 
 there exists $R_{x}>0 $  such that for any $0<r<R_{x}$,
 \begin{equation}\label{*ldp*a}
 \mmm\big(\{ x'\in B(x,r)\cap \widetilde{E}: \rho_{x'} \geq R_{x} \}\big)>\ggg\cdot \mmm(B(x,r)).
 \end{equation}
\end{lemma}
%%%%%%%%%%%%%%%%%%%%%%%%%%%%%%%%%%

%%%%%%%%%%%%%%%%%%%%%%%%%%%%%%%%%%
\begin{proof}
Consider $\widetilde{E}_{n}=\{ x\in\widetilde{E}:\rho_x>\frac{1}{n} \}$, for $n\geq 1$. Then $\mmm(\widetilde{E}\sm \cup_{n \in \N}\widetilde{E}_{n})=0$ by assumption. 

Fix now $n\in\N$. By Corollary \ref{*Ma214}
$$\frac{\mmm(\widetilde{E}_{n}\cap B(x,r))}{\mmm(B(x,r))}\to 1 \text{ for $\mmm$-a.e. }x\in\widetilde{E}_{n}.
$$
For those $x\in\widetilde{E}_{n}$ for which the above limit holds true,  it is thus enough to  choose $0<R_{x}<\frac{1}{n}$
 such that 
 $$\frac{\mmm(\widetilde{E}_{n}\cap B(x,r))}{\mmm(B(x,r))}>\ggg \text{ holds for any } 0<r<R_{x}.$$ \end{proof}
%%%%%%%%%%%%%%%%%%%%%%%%%%%%%%%%%%

Using Lemma \ref{*lemdp} with $\ggg=1-\frac{\eta}{2}$ applied to  $ {E}$, for $\mmm$-a.e. $x\in  {E}$ there exists an  $R_{x}>0$ such that for any $0<r<R_{x}$,
\begin{equation}\label{*th4*6*b}
\mmm(\{ x'\in B(x,r)\cap E:\rho_{x'} \geq R_{x} \})> \Big (1-\frac{\hhh}{2}\Big )\mmm(B(x,r)).
\end{equation} 
%
%Since ${\overline{\mathrm {dim}}\, } \mmm=\overline{d}=1.99$ for $\mmm$ a.e. $x\in\widetilde{E}$ we can choose $\overline{ \rho}_x>0$
% such that  for any $0<r\leq \overline{ \rho}_x$
% we have
% \begin{equation}\label{*th4*7*a}
% \mmm(B(x,r))\geq r^{1.991}.
% \end{equation}
%

Consider a large natural number $N_{0}>10$, whose precise value will be chosen later.
 
 By using the definition of $ {E}=E_\mu(30,\eta)$, for $\mmm$-a.e. $x\in  {E}$ one can select 
 $0<r_{x}\leq \min \{ 2^{-N_{0}},R_{x}/10, \rho_x/10 \}$ such that
\begin{equation}\label{*th4*3*b}
\mmm(A(x,r_x,30) )\geq \hhh\cdot \mmm(B(x,r_{x})).
\end{equation}

Recalling that all $r_x$s are less than $2^{-N_0}$, and that $\mu$-a.e. $r_x$ is strictly positive, there exists at least one integer   $n_0 \geq N_{0}$  such that 
\begin{equation}\label{*th4*4*a}
\mmm(\{ x\in {E}: 2^{-n_0 -1}\leq r_{x}<2^{-n_0 } \})\geq \frac{1}{10n_0 ^{2}}\mmm({E}).
\end{equation}

Consider now the covering of $\{ x\in {E}: 2^{-n_0 -1}\leq r_{x}<2^{-n_0 } \}$ by the balls $\{B(x,r_x ):  x\in {E},\,  2^{-n_0 -1}\leq r_{x}<2^{-n_0 } \}$. 
 By the measure theoretical version of   Besicovitch's   covering theorem (see \cite{Bene}, Theorem 20.1 for instance), there exists a constant $C_2>0$, depending only on the dimension 2, such that one can extract a finite family of disjoint balls of radius $4\cdot 2^{-n_0 }$, denoted by 
$\cab=\{ B_{i}:i=1,..,M  \}$, such that, calling $\widetilde{E} = {E}\cap \cup_{B_{i}\in \cab}B_{i}$,  one has $\mu(B_i)>0$  for every $i$, and 
   \begin{equation}\label{*th4*4*b} 
\mmm(\widetilde{E} _{n_0 })\geq \frac{ {C}_2 }{n_0 ^{2}}
\mmm( {E}), \mbox{ where } \widetilde{E} _{n_0 }=\{ x\in\widetilde{E} :2^{-n_0 -1}\leq r_{x}<2^{-n_0 } \}.
\end{equation}

%%%%%%%%%%%%%%%%%%%%%%%%%%%%%%%%%%
\begin{lemma}
\label{lem-ball}
There exists a constant $C>0$ depending only on the dimension and a ball $B^*=B(x^*, 2^{-n_0}/100)$  such that 
 \begin{equation}\label{*th4*7*b}
 \mmm\Big(B \big(x^*  ,2^{-n_0 }/100 \big)\cap \widetilde{E} _{n_0}  \Big) \geq
 \frac{C}{n_0^{2}} 2^{-2{n_0} } \mmm( {E}).
 \end{equation}
\end{lemma}
%%%%%%%%%%%%%%%%%%%%%%%%%%%%%%%%%%

%%%%%%%%%%%%%%%%%%%%%%%%%%%%%%%%%%
\begin{proof}

Since $\widetilde{E}\subset \zu^2$,  the balls of $\cab$ 
are disjoint and are of radius less than $2^{-10}$,  their cardinality $M$  is less than $2^{2n_0}$, and there exists one ball $B_i$ such that 
\begin{equation}
\label{eqlemma1}
\mu(B_i \cap \widetilde{E} _{n_0 })\geq  \frac{1}{2^{2n_0+2}} \mu( \widetilde{E} _{n_0 }) \geq \frac{ {C}_2 }{2^{2n_0} n_0 ^{2}} \mmm({E}).
\end{equation}

Write $B_i=B(x_i, r_{x_i})$.
 Since  $ r_{x_i}/ 100< 2^{-n_0 }/100 < 2^{-n_{0}-1}\leq r_{x_i}$, there exists a constant $ {C}_3>0 $ which depends only on the dimension such that for some $x^*\in B_i \cap \widetilde{E}$, the ball with center $x^*$ and radius $2^{-n_0 }/100$ supports a proportion $C_3>0$ of the $\mu$-mass of the initial ball, i.e.
  \begin{equation}\label{*th4*5*a}
\mu(B(x^*, 2^{-n_0 }/100))  \geq \mmm(B(x^* ,2^{-n_0}/100) \cap \widetilde{E}_{n_0} )\geq  {C}_3 \mmm(B_i\cap\widetilde{E}_{n_0}).   
 \end{equation}

The last statement simply follows from the fact that $B_i$ can be covered by finitely many balls of radius $2^{-n_0 }/100$, this finite number of balls being  bounded above independently of $x_i$ and $r_{x_i}$.

%
%Using \eqref{*th4*4*b},   each $B\in \cab $ has a positive $\mu$-measure,   it is possible to  choose $x^*_B \in \widetilde{F} $
% such that $2^{-n_0 -1}\leq r_{x}<2^{-n_0 } $. Recalling \eqref{*th4*3*a} and the fact that $2^{-n_0}\leq r_x  \leq \rho_x$, with a constant $ {C}_2$ which depends only on the dimension one has
% \begin{equation}\label{*th4*5*a}
%\mmm(B(x^*_B ,2^{-n_0}/100) \cap \widetilde{F}_{n_0} )\geq  {C}_2 \mmm(B\cap\widetilde{F}_{n}).   
% \end{equation}
 This and \eqref{eqlemma1} imply the result.
\end{proof}
%%%%%%%%%%%%%%%%%%%%%%%%%%%%%%%%%%

 Further,  as a second step, we  seek for  a lower estimate of the   number $M^*_{n_0}$ of disjoint balls $B(y_{j},2^{-5n_0})$, $j=1,...,M^{*}_{n_0}$
 such that $y_{j}\in B(x^*, 2^{-{n_0}}/100)\cap \widetilde{E} _{n_0}.$  
The rest of the proof consists of
 showing that   annuli centered at $y_{j}$, $j=1,...,M^*_{n_0}$, will charge $B(x^*, 4\cdot 2^{-{n_0}})$ with too much measure, yielding a contradiction.  Lemma \ref{lemmcor} will play a key role here.

\medskip

 \begin{lemma}
 \label{lem_minor}
 When $N_0$ is sufficiently large, for every $n_0\geq N_0$, call    $M_{n_0} ^*$ the maximal number of disjoint balls $B(y_{j},2^{-5n_0})$, $j=1,...,M^{*}_{n_0}$
 such that $y_{j}\in B(x^*, 2^{-{n_0}}/100)\cap \widetilde{E} _{n_0}$. Then $M_{n_0}^* \geq 2^{2.1 n_0}$.
 \end{lemma}
 \begin{proof}
 
 First, observe that, when $y_{j}\in B(x^*, 2^{-{n_0}}/100)\cap \widetilde{E} _{n_0},$      by \eqref{*th4*3*a}
 \begin{equation}\label{*th4*8*a}
 \mmm(B(y_{j},2^{-5{n_0}}))
 \leq 2^{-0.88\cdot 5{n_0}}= 2^{ -4.4{n_0}}.
  \end{equation}

Next, there exist two  positive constants $ {C}_{3}$
and $ {C}_{4}$ depending only on the dimension  such that $B(x^*, 2^{-{n_0}}/100) \cap \widetilde{E} _{n_0}$ is covered by $C_3$ families $\mathcal{F}_1$, ..., $\mathcal{F}_{C_3}$  containing pairwise disjoint balls of the form $B(y_{j},2^{-5n_0})$. 
At least one of these families, say  $\mathcal{F}_1$, satisfies that 
$$\sum _{B(y_{j},2^{-5n_0}) \in \mathcal{F}_1} \mu(B(y_{j},2^{-5n_0})) \geq \frac{1}{C_3}  \mu( B(x^*, 2^{-{n_0}}/100)\cap \widetilde{E} _{n_0}) .$$

By \eqref{*th4*7*b}, $\sum _{B(y_{j},2^{-5n_0}) \in \mathcal{F}_1} \mu(B(y_{j},2^{-5n_0}))  \geq  \frac{C}{C_3 n_0^{2}} 2^{-2{n_0} } \mmm( {E})$. Then, from \eqref{*th4*8*a} one deduces that 
\begin{equation}\label{*th4*8*b}
M_{n_0}^{*}\geq  \frac{C}{C_3 n_0^2} 2^{-2{n_0} }   \mmm( {E})  \frac{1}{ 2^{-4.4{n_0}} }\geq   2^{2.1{n_0}},
\end{equation} 
when $N_{0}$ is  chosen sufficiently large.
 \end{proof}
%Since $x^* \in \widetilde{E}' $ by \eqref{*th4*3*a}
%\begin{equation}\label{*th4*9*a}
%\mmm(B(x^*,4\cdot 2^{-n}))\leq (4\cdot 2^{-n})^{1.009}.
%\end{equation}

%
%We use for this argument the geometric property that annuli with centers $x_{j}$ are not intersecting
% each other in large sets
%in the sense of Lebesgue measure. By using the lower dimension information about $\mmm$ this will also provide us
%small $\mmm$-correlation.
%

Next, as a third step, we study the annuli  $A(y_i,r_{y_i},30) $ associated with the points $y_i$, $i=1,..., M^*_{n_0}$.  Observe that these points $y_i$  satisfy the assumptions of Lemma \ref{lemmcor} and  in particular equation \eqref{*eqlemcorb} with $n=n_0$, 
as soon as $N_{0}\geq N_{corr}$.

 \begin{lemma}
 \label{lemma55}
For every $x\in B(x^*, 2^{-n_0}/100)\cap \widetilde E_{n_0}$, set 
\begin{equation}\label{*Atdef}
\widetilde{A}(x,r_x,30)= \Big\{ x'\in A(x,r_x, 30)\cap {E}:\rho_{x'} >10r_x \Big\}.
\end{equation}
Then,   for some constant $C>0$ that depends only on the dimension one has 
\begin{equation}\label{*th4*10*b}
\mmm \big(\widetilde{A}(x,r_x,30)\big ) \geq \frac{\hhh}{2-\hhh} \cdot \frac{C}{n_0 ^{2}} \mmm( {E})\cdot 2^{-2{n_0} }.
\end{equation}
 \end{lemma}
 \begin{proof}

Since $r_{x}\leq R_{x}/10$, from \eqref{*th4*6*b} and \eqref{*th4*3*b} we infer that
for $\mmm$-a.e. $x\in {E}$,  one has
\begin{align*}
\mmm \big(\widetilde{A}(x,r_x,30)\big ) = \mmm(\{ x'\in A(x,r_x,30)\cap E: \rho_{x'} >10r_x \})  \geq  \frac{\hhh}{2}\mmm(B(x,r_x)).
\end{align*}
The last inequality holds for every $r_x$ such that  \eqref{*th4*6*b} holds true.

%
%
% By \eqref{*th4*6*b},   for 
%$x\in B(x^*, 2^{-{n_0} }/100)\cap \widetilde{F} _{n_0} $ one has
%\begin{equation}\label{*th4*9*b}
%\mmm(\widetilde{A}(x,r_x,30) )>\frac{\hhh}{2}\mmm(B(x,r_{x})).
%\end{equation} 
%

Recalling that for $x\in \widetilde{E}_{n_0} $,   $2^{-{n_0} -1}\leq r_{x}<2^{-{n_0} }$, one deduces that 
for any $x\in B(x^*, 2^{-{n_0} }/100)\cap \widetilde{E} _{n_0} $, 
 $ B(x^*, 2^{-{n_0} }/100)\sse B(x,r_{x}/4)$.

 Hence, by \eqref{*th4*7*b},
\begin{equation}\label{*th4*10*a}
\mmm(B(x,r_{x}/4))\geq \mmm( B(x^*, 2^{-{n_0} }/100)\cap
\widetilde{E} _{n_0} )\geq \frac{C}{n_0 ^{2}}\mmm( {E})\cdot 2^{-2{n_0} }.
\end{equation}

It is also clear that $B(x,r_{x}/4)\cap \widetilde{A}(x,r_x,30) =\ess$
for such $x$s. So, the fact that   
$$\mmm(\widetilde{A}(x,r_x,30) )\geq \frac{\hhh}{2}\mmm(B(x,r_{x}))
\geq \frac{\hhh}{2}\Big ( \mmm(\widetilde{A}(x,r_x,30) )+\mmm(B(x,r_{x}/4)) \Big )$$
implies by using \eqref{*th4*10*a} that
$$
\mmm(\widetilde{A}(x,r_x,30) )\geq \frac{\hhh}{2-\hhh}\mmm(B(x,r_{x}/4))
\geq \frac{\hhh}{2-\hhh} \cdot \frac{C}{n_0^{2}}\mmm( {E})\cdot 2^{-2{n_0} },
$$
and the result follows.
 \end{proof}

 We are now ready to combine the previous arguments to prove Theorem \ref{mainth4}.

 \medskip

% 
%Next we use the information that we have $M^{*}$ many $x_{j}$s for which we can apply \eqref{*th4*10*b}.
%If the annuli ${A}(x_j,r_{x_j},30) $ and hence the sets $\widetilde{A}(x_j,r_{x_j},30) $
%are sufficiently independent then we can find a contradiction to 
%$\mmm(\R^{2})=1$.
%
%By Lemma \ref{lemmcor}, for any $i,j\in\{ 1,...,M^{*} \}$,
%$i\not=j$, one has
%\begin{equation}\label{*th4*11*a}
%\lll \big({A}(y_i,r_{y_i},30) \cap {A}(y_j,r_{y_j},30) \big)\leq 2^{-64 {n_0} },
%\end{equation}
%provided that $N_{0}\geq N_{corr}$ (which we assume now).
%

As in Lemma \ref{lem_minor}, select points $(y_{j})$, $j=1,...,M_{n_0} ^{*}$, 
 such that the balls    $B(y_{j},2^{-5n_0}) \subset B(x^*, 4\cdot 2^{-{n_0}})$ are pairwise disjoint and  $y_{j}\in B(x^*, 2^{-{n_0}}/100)\cap \widetilde{E} _{n_0}$.  
 
By construction,  the annuli
${A}(y_i,r_{y_i},30) $
and the sets  $ \widetilde{A}(y_i,r_{y_i},30) $ satisfy the assumptions of Lemmas \ref{lemmcor}  and \ref{lemma55}.  Also, for any $x'\in \widetilde{A}(y_i,r_{y_i},30) \cap\widetilde{A}(y_j,r_{y_j},30) $,  one has  $\rho_{x'} >10\cdot 2^{-{n_0} -1}$. Hence, for any $r<10\cdot 2^{-{n_0} -1}$, by \eqref{*th4*3*a} one necessarily also has  $
\mmm(B(x',r))\leq r^{0.88}$.

Then, as stated by Lemma  \ref{lemmcor}, the diameter of each of the (at most) two connex   parts of ${A}(y_i,r_{y_i},30) \cap {A}(y_j,r_{y_j},30) $ is smaller than $2^{-13 n_0}<24\cdot 2^{-13.5 n_0}$.  These connex parts are included in an annulus ${A}(y_i,r_{y_i},30)$, so it is a very thin region (the width of the annulus is   less than  $2^{-30n_0}$).
Hence, the intersection of $E$ with the union of the two connex parts    can be covered by at most $M^{**}$ balls of the form $B(z_{\ell },2^{-30n_0})$, where 
\begin{align}
\label{*th4*12*b}
& z_{\ell }\in \widetilde{A}(y_i,r_{y_i},30) \cap\widetilde{A}(y_j,r_{y_j},30),\\
\label{*th4*12*a}
& M^{**} \leq   C^{**}  \frac{2^{-13 n_0}}{ 2^{-30n_0}} = C^{**}2^{17 n_0}, 
\end{align}
and  $C^{**}$  is a positive   constant  depending only on the dimension.

 Also, by \eqref{*Atdef} and \eqref{*th4*12*b},  
 one sees that $\rrr_{z_{\ell }}>2^{-n_0-1}\cdot 10>2^{-30 n_0}$ for all $z_{\ell}$s.
Hence, \eqref{*th4*3*a} yields 
\begin{equation}\label{*th4*12*c}
\mmm(B(z_{\ell },2^{-30 n_0}))\leq 2^{-30n_0\cdot 0.88}=2^{-26.4 n_0}.
\end{equation}
This together with  \eqref{*th4*12*a} imply that 
\begin{align}\nonumber
\mmm\big (\widetilde{A}(y_i,r_{y_i},30) \cap\widetilde{A}(y_j,r_{y_j},30)\big )& \leq M^{**}\cdot 2^{-26.4 n_0}
<C^{**} 2^{17 n_0}\cdot 2^{-26.4 n_0}\\
\label{*th4*13*a}
&=C^{**}2^{- 9.4 n_0}.
\end{align}

In addition,  \eqref{*th4*10*b} gives
\begin{equation}\label{*th4*13*b}
\mmm \big (\widetilde{A}(y_i,r_{y_i},30)\big )\cdot \mmm \big(\widetilde{A}(y_j,r_{y_j},30)\big )\geq
\Big (\frac{\hhh}{2-\hhh}\Big )^{2} \cdot \frac{C^2}{n_0^{4}} \mmm^{2}( {E})\cdot 2^{-4n_0}.
\end{equation}
Hence by \eqref{*th4*13*a} for large $n_0$,  
\begin{equation}\label{*th4*13*c}
\mmm(\widetilde{A}(y_i,r_{y_i},30) \cap \widetilde{A}(y_j,r_{y_j},30) )<  2^{-5.3 n_0 } \mmm(\widetilde{A}(y_i,r_{y_i},30) )\cdot \mmm(\widetilde{A}(y_j,r_{y_j},30) ).
\end{equation}

Finally,    all sets $\widetilde{A}(y_i,r_{y_i},30) $ are included in $B(x^*,4\cdot 2^{-n_0})$, and their cardinality by  Lemma  \ref{lem_minor} is greater than $2^{2.1 n_0}$. So, one has 
%\begin{align*} 
%\mmm(B(x^*,4\cdot 2^{-n_0}))& \geq \sum_{i=1}^{M^*}\mmm(\widetilde{A}(y_i,r_{y_i},30) ) \\& -  \sum_{i,j=1: \, i\neq j }^{M^*}\mmm(\widetilde{A}(y_i,r_{y_i},30) \cap \widetilde{A}(y_i,r_{y_i},30)) .
%\end{align*}
% By   \eqref{*th4*8*b}, 
\begin{align*} 
\mmm(B(x^*,4\cdot 2^{-n_0}))& \geq \sum_{i=1}^{2^{2.1n_0}} \mmm(\widetilde{A}(y_i,r_{y_i},30) )\\
&  -  \sum_{i,j=1: \, i\neq j }^{2^{2.1n_0}}  \mmm(\widetilde{A}(y_i,r_{y_i},30) \cap \widetilde{A}(y_i,r_{y_i},30)) \\
& \geq
\sum_{i=1}^{2^{2.1n_0}}\mmm(\widetilde{A}(y_i,r_{y_i},30) )\Big (1-
\sum_{j=1}^{2^{2.1n_0}}2^{-5.3 n_0}\mmm( \widetilde{A}(y_j,r_{y_j},30) ) \Big )\\
& \geq  \sum_{i=1}^{2^{2.1n_0}}\mmm(\widetilde{A}(y_i,r_{y_i},30) ) (1-
2^{2.1n_0}\cdot 2^{-5.3n_0}),
\end{align*}
where at the last step we simply used that   $\mmm( \widetilde{A}(x_j,r_{x_j},30) )\leq 1$. Then,   \eqref{*th4*10*b} yields that 
when $n_{0}\geq N_{0}$ is sufficiently large,
\begin{align*} 
\mmm(B(x^*,4\cdot 2^{-n_0}))&  \geq \sum_{i=1}^{2^{2.1n_0}}\mmm(\widetilde{A}(y_i,r_{y_i},30) ) (1-
2^{2.1n_0}\cdot 2^{-5.3n_0})\\
& \geq \frac{1}{2}\sum_{i=1}^{2^{2.1n_0}}\mmm(\widetilde{A}(y_i,r_{y_i},30) )  \\
& \geq 2^{2.1 n_0} \frac{\hhh}{2-\hhh}\cdot \frac{C}{n_0^{2}}
\mmm( {E})2^{-2n_0},
\end{align*}
which is greater than 1 as soon as  $N_{0}$ (hence $n_0 $) is chosen sufficiently large. This contradicts the fact that
$\mmm(\R^{2})=1$, 
and completes the proof.    

\begin{remark}
It would be natural to check if  a version of Theorem \ref{mainth4} holds in which the constant
$0.89 $ can be pushed down to a value  closer to zero, maybe at the price that 
$\ddd=30$ is replaced by a larger number.
We point out that the estimates \eqref{*th4*12*a} and \eqref{*th4*13*a}
show that in our arguments the order of estimate of $D^{*}_{corr,n}$ in 
Lemma \ref{lemmcor} is crucial. In \eqref{*th4*13*a} in the end of the inequality, we need a (sufficiently large) negative power of $2$. 
If $0.89$ is replaced by $0.56=17/30$, using \eqref{*th4*12*a} and
\eqref{*th4*12*c} one can see that in the crucial estimate \eqref{*th4*13*a}, the power of $2$ will become non-negative.
Since the order of the estimate in Lemma \ref{lemmcor} is best possible, then one cannot improve significantly   \eqref{*th4*12*a} and hence the other estimates 
depending on it (by tighter estimates the exponent $30-13=17$ in \eqref{*th4*12*a} can be replaced by $30-13.5+\eee=16.5+\eee$).
\end{remark}

%%%%%%%%%%%%%%%%%%%%%%%%%%%%%%
\subsection{Proof of Lemma \ref{lemmcor}}
\label{sseclemcor}

Without limiting generality we can suppose $r_{1}\geq r_{2}$ and can choose a  coordinate system in which $z_{1}=(0,y_{1})$, $z_{2}=(0,y_{2})$ and $y_{1}=r_{1}$. 
See Figure \ref{mcafig2} for an illustration (the figures are of course distorted, since   $2^{-30 n}$ is much   smaller than $2^{-n}$, so on a correct figure one of them cannot be shown, due to pixel size limitations).

In the proof,  when constants are said to depend  on the dimension 2 only, they do not depend  on other parameters - similar constants exist in higher dimensions as well.

\begin{comment}
\begin{figure}
%\includegraphics[scale=0.1]{mca2fig}
\centering{
\resizebox{1\textwidth}{!}{\input{mcafig2.tex}}}
\caption{Positions of intersecting annuli.}
\label{mcafig2}
\end{figure}

%\begin{comment}
\begin{figure}
\centering{
\includegraphics[width=1\textwidth]{mcafig2.pdf}}
%\resizebox{1\textwidth}{!}{\input{mcafig2.tex}}}
\caption{Positions of intersecting annuli.}
\label{mcafig2}
\end{figure}
%\end{comment}

\end{comment}

 With this notation, \eqref{*eqlemcorb} means that
\begin{equation}\label{*eqlemcoro}
2^{-5 n}\leq |{y_{1}-y_{2}}|\leq 2^{-n}/30 .
\end{equation}
Of course the last inequality holds for sufficiently large  $n$s.
\begin{equation}\label{*supstrip}
\text{If $ {A}(z_1,r_{1},30)  \cap {A}(z_2,r_{2},30) $ is included }
\end{equation}
$$\text{ in the strip $[-8\cdot 2^{-13.5n},8\cdot 2^{-13.5n}]\xx \R$,}$$
 then its diameter is less than 
$24\cdot 2^{-13.5 n}$.

Assume now  that  $ {A}(z_1,r_{1},30)  \cap {A}(z_2,r_{2},30) $ is not included  in the strip $[-8\cdot2^{-13.5n},8\cdot2^{-13.5n}]\xx \R$.

We consider one of the two connex parts of $ {A}(z_1,r_{1},30)  \cap {A}(z_2,r_{2},30) $, the one located in the right half-plane. We denote its closure by $\cac_R$. 
See Figures \ref{mcafig2} and \ref{mcafig3}. 
The other part is symmetric and a similar estimate is valid for it.

Assume that   $r_{1}$ is    fixed and   $r\in [r_2-r_2^{30},r_2]$. 
Denote by  $M^1 (r) =(x^1(r),y^1(r) )$ the intersection of the circles   of radii $r_1-r_1^{30} $ and $r$, centered respectively at $z_1$ and $z_2$. 
We put $A=M^1(r_2)=(x_A,y_A)$, and $B=M^1(r_2-r_2^{30})=(x_B,y_B)$.
Observe that the points $M^1(r) $, $r\in [r_2-r_2^{30},r_2]$ are located on the arc with endpoints $A$ and $B$. On Figure \ref{mcafig2} the point $A$
is the point of the closed region $\cac_{R}$ with the largest abscissa.
Then $x_A=x^1(r_{2})>8\cdot2^{-13.5n}$.
However, as the left half of Figure \ref{mcafig3} illustrates,  for 
$r_{1}\approx r_{2}$ it may happen that not $A=M^1(r_2)=(x_A,y_A)$
is the point of $\cac_{R}$ with the largest abscissa.
On the left half of Figure \ref{mcafig3} this point is $C$.
In the sequel we suppose that $x_A=x^1(r_{2})>8\cdot2^{-13.5n}$.
The other cases can be treated analogously, the main point is that,
on the boundary of $\cac_R$, there is at least one point with abscissa 
larger than $8\cdot2^{-13.5n}$.

The abscissa $x^1(r)$   satisfies the implicit equation
$$\mathbf{F}_1 (r, {x}^1(r)):=y_{2}-y_{1}+\sqrt{r^{2}-( {x}^1(r))^2}-\sqrt{(r_{1} -r_1^{30})^{2}- ({x}^1(r))^2}=0. $$
Observe that by our assumption,  the intersection point lies  in the first quadrant $ {x}^1(r)>0$.
By implicit differentiation and  after simplification,
\begin{align}
\nonumber
({x} ^1)'(r) & =-\frac{\dd_{1}\mathbf{F}_1 (r, {x}^1 (r))}{\dd_{2}\mathbf{F}_1(r, {x}^1 (r))} =\frac{r\sqrt{(r_{1} -r_1^{30})^{2}- ({x}^1(r))^2}}{ x^1(r) (y_{1}-y_{2})}.
\label{*bfxde}
\end{align}

%
%
%This time one   obtains
%\begin{equation}\label{*bfxdk} 
%\mathbf{x}_2'(r)=\frac{r\sqrt{r_{1}^{2}-\mathbf{x}_2^2(r)}}{\mathbf{x}_2(r)(y_{1}-y_{2})}.
%\end{equation}
%

From the last equation, one deduces  by \eqref{*eqlemcoro}, $r\in [r_{2}-r_{2}^{30},r_{2}]$,  $2^{-n-1}\leq r_{1},r_{2}\leq 2^{-n}$   that
$$ |( {x}^1)'(r) {x}^1(r)| \leq \frac{r\sqrt{(r_{1} -r_1^{30})^{2}- ({x}^1(r))^2} }{ (y_{1}-y_{2})} \leq  \frac{2^{-2n}}{2^{-5n} } \leq 2^{3n}. $$
By integration, $ | ({x}^1(r))^2 -  ({x}^1(r_2))^2| \leq 2\cdot  2^{3n} |r-r_2| \leq 2^{3n+1 } 2^{-30n} \leq  2^{-27n+1}$, and 
hence
$$
  | {x}^1 (r) -  {x}^1(r_2)| \leq  \frac{ 2^{-27n+1}}{ | {x}^1 (r) + {x}^1(r_2)| }.
$$
  
%By assumption, $x_A=x_A(r_2)>2^{-10n}$. 
Finally using that $x^1(r_{2})=x_{A}\geq8\cdot 2^{-13.5n}$
the previous equation gives that for any $r\in [r_2-r_2^{30},r_2]$
\begin{equation}
\label{eq12}
  | {x}^1 (r) -  {x}^1(r_2)| \leq 2^{-27n+1} 2^{13.5n-3} = 2^{-13.5n-2}  \text{, so  }|x_A-x_B|< 2^{-13.5n-2}
  \end{equation}
 and $x_B>8\cdot 2^{-13.5n}-2^{-13.5n-2} = 7.75 \cdot 2^{-13.5 n-1}$.
  
 \begin{figure}
\centering{
\includegraphics[width=1\textwidth]{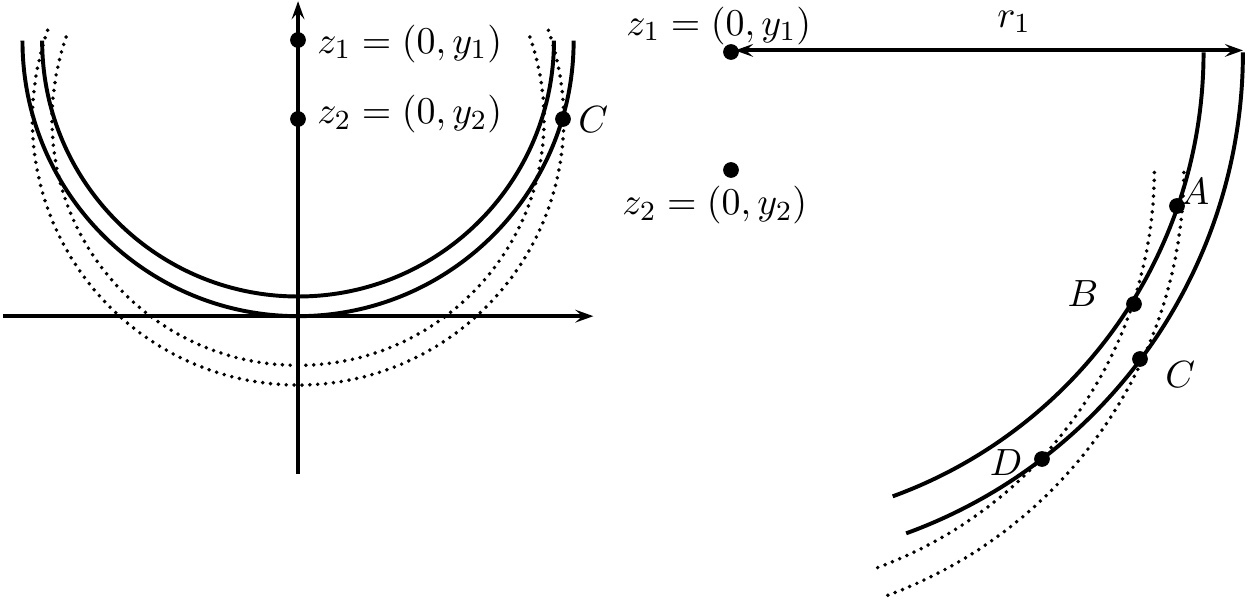}}
%\resizebox{1\textwidth}{!}{\input{mcafig3.tex}}}
\caption{Special position of annuli and calculating the diameter of their intersection.}
\label{mcafig3}
\end{figure}

Assume now that   $r_{2}$ is    fixed and   $r\in [r_1-r_1^{30},r_1]$. 
Denote by  $M^2(r) =(x^2(r),y^2(r) )$ the intersection of the circles   of radii $r$ and $r_2-r_2^{30} $, centered respectively at $z_1$ and $z_2$. 
We put $D=M^2 (r_1)=(x_D,y_D)$.
Observe that the points $M_2 (r) $, $r\in [r_1-r_1^{30},r_1]$ are located on the arc with endpoints $B$ and $D$. 

Using the implicit equation $$\mathbf{F}_2 (r, {x}^2(r)):=y_{2}-y_{1}+\sqrt{(r_2-r_2^{30})^{2}- ({x}^2(r))^2}-\sqrt{r^{2}- ({x}^2(r))^2}=0,$$ the same considerations show that
 for any $r\in [r_1-r_1^{30},r_1]$ when $n\geq N_{corr}$ is sufficiently large
\begin{equation}
\label{eq122}
  | {x}^2 (r) -  {x}^2(r_2)| \leq 2^{-27n+1} 2^{13.5 n}/7.75 = 2^{-13.5n+1}/7.75,   
  \end{equation}
$$\text{ so }|x_B-x_D|\leq 2^{-13.5n+1}/7.75,$$
and $x_D>7.75\cdot 2^{-13.5n}- 2^{-13.5n+1}/7.75> 7\cdot  2^{-13.5n}$.

One can also consider the curve  $M^3(r) =(x^3(r),y^3(r) )$ connecting the points $D$
and $C$ on the boundary of $\cac_R$ and the curve  $M^4(r) =(x^4(r),y^4(r) )$ connecting the points $C$
and $A$ on the boundary of $\cac_R$. Estimates analogous to equations 
\eqref{eq12} and \eqref{eq122} are valid for these curves as well with
constants $7.75$ and $7.5$ decreased to $7$ and $6$. 
Since $\cac_{R}$ is compact, we can choose points $P=(x_{P},y_{P})$ and $Q=(x_{Q},y_{Q})$
on the boundary of $\cac_R$ such that the distance between $P$ and $Q$
equals the diameter of $\cac_R$.
Since these points can be connected by no more than three of the above mentioned arc segments we deduce that
\begin{equation}\label{*xpqest}
|x_P-x_Q|\leq 2^{-13.5 n}.
\end{equation}

The analogous estimate 
\begin{equation}\label{*ypqest}
|y_P-y_Q|\leq 2^{-13.5 n}
\end{equation}
is also true. In fact in this case the calculations are even simpler,
and most of the details are left to the reader.
We mention here only that, for example, for the function $y^1 (r)$
 one has a much simpler implicit equation
\begin{equation}\label{*Fey}
\mathbf{F}_3(r,y^1(r)):=r_1^{2}-r^{2}-(y^1(r)-y_{1})^{2}+(y^1(r)-y_{2})^{2}=0
\end{equation}
and by implicit differentiation 
\begin{equation}\label{*bfyde}
({y}^1)'(r)=
\frac{r}{y_{1}-y_{2}}.
\end{equation}

From this and \eqref{*supstrip} one concludes that the diameter of 
$\cac_{R}$
is less than $24\cdot 2^{-13.5n}$.

This concludes the proof, since the symmetric part (i.e. when $x_A<0$) is treated similarly.

%%%%%%%%%%%%%%%%%%%%%%%%%%%%%%%%%%%%%%%%%%%%%%%%
%%%%%%%%%%%%%%%%%%%%%%%%%%%%%%%%%%%%%%%%%%%%%%%%
%%%%%%%%%%%%%%%%%%%%%%%%%%%%%%%%%%%%%%%%%%%%%%%%
\section{Methods related to Falconer's distance set problem}\label{*secdist}

The study of thin annuli and spherical averages is an important issue in many dimension-related problems, including Kakeya-type problems and Falconer's distance set conjecture. Recall that the distance set $D(E)$ of the set $E\sse \R^{d}$ is defined by
$$D(E):=\{ ||x-y||_2 : \  x,y\in E \}.$$
Falconer's distance set problem is about finding bounds of Hausdorff measure and dimension of $D(E)$ in terms of those of $E$.  Examples of Falconer show
that if $s\leq d/2$ then there are sets $E\sse \R^d$ such that $\dim_{H} E=s$
and $D(E)$ is of zero (one-dimensional) Lebesgue measure. It is conjectured that $E$ has positive Lebesgue measure as soon as $\dim_H E> d/2$. 
In one of the most recent results in the plane ($d=2$)  \cite{GuthFad},    it is proved that if $E$ is compact and $\dim_H E>5/4$ then
$D(E)$ has positive Lebesgue measure. For further details about Falconer's distance set problem we also refer to
\cite{Fadist} and Chapters 4, 15 and 16 of \cite{MattilaFou}.

Using standard arguments from \cite{Fadist,MattilaFou}, which is a different approach from the one developed earlier in this paper,  we can prove Proposition \ref {*anonym}.
 
 \begin{proof}
Let $t > 1/2$, $0<\eta<1$ and let $\mu$ be a finite $t$-regular measure on $\R^2$ with compact support satisfying \eqref{*treg}. 
Since we work in $\R^{2}$ the local dimension of $\mmm$ cannot exceed $2$,
so $2\geq t>1/2$.

Since $ \mu$ is $t$-regular, for every $s<t$, the   $s$-energy  of  $\mu$
 defined by $$
I_s (\mu)=\iint_{(\R^2)^2}
||x-y||_{2}^{-s}d\mmm(x)d\mmm(y)$$
is finite. Recall  also that  
\begin{equation}
\label{Ismu-fourier}
I_s (\mu)=\int_{\R^2} |\xi |^{s-2} |\hat\mu(\xi)|^2 d\xi ,
\end{equation}
 and for   every compactly supported function $\chi$ on $\R^d$, one has
\begin{equation}
\label{eq-transform}
\iint_{(\R^2)^2} 
\chi(x-y) d\mmm(x)d\mmm(y) =\int_{\R^2}
|\hat\chi(\xi) | \cdot |\hat\mu(\xi)|^2 d\xi,
\end{equation}
  where $\hat\mu$ and $\hat \chi$ are the Fourier transform of $\mu$ and $\chi$.

 Let 
\begin{equation}\label{*p12}
s:=\frac{1}{2}(t+\frac{1}{2})\text{ and }\ddd:=4=2(t-1/2)/(s-1/2)>(t-1/2)/(s-1/2).
\end{equation}
Oue estimations on $t$ imply $1/2<s\leq 5/4.$

Set  $\chi(x) = {\bf 1\!\!\!1} _{[r-r^\delta,r]}(|x|)$.
By Lemma 2.1 of \cite{Fadist}, 
\begin{equation}
\label{eqfalc}
|\hat\chi(\xi)| \leq C  r^{1/2} |\xi|^{-1/2} \min (r^{\delta}, |\xi|^{-1}).
\end{equation}

Following Falconer's argument (Theorem 2.2 of \cite{Fadist}) (see also \cite[Lemma 12.13]{MattilaFou}), one gets  by  \eqref{Ismu-fourier}, \eqref{eq-transform} and \eqref{eqfalc} that, 
keeping in mind that  $1/2<s\leq 5/4<3/2$
\begin{align*}
\iint_{(\R^2)^2}  \mmm(A(x,r,\ddd))d\mmm(x)& = \int_{\R^2}
|\hat\chi(\xi) | |\hat\mu(\xi)|^2 d\xi\\
&\leq C r^{1/2} \Big ( r^{\ddd} \int_{|\xi|\leq r^{-\delta} }
|\xi|^{-1/2}  |\hat\mu(\xi)|^2d\xi \\
& \hspace{5mm} +   \int_{|\xi |> r^{-\delta} }
|\xi|^{-3/2}   |\hat\mu(\xi)|^2 d\xi \Big ) \\ 
&\leq C r^{1/2} \Big ( r^{\ddd} \int_{|\xi|\leq r^{-\delta} }
|\xi|^{-1/2+(s-3/2)} |\xi|^{3/2-s}  |\hat\mu(\xi)|^2d\xi \\
& \hspace{5mm} +   \int_{|\xi |> r^{-\delta} }
|\xi|^{-3/2+s-1/2}|\xi|^{1/2-s}   |\hat\mu(\xi)|^2 d\xi \Big ) \\
& \leq C r^{1/2+\ddd(s-1/2)}I_{s}(\mu),
\end{align*}
for some constant $C>0$ that depends on $\mu$ and might change from line to line. 
Consequently, by Chebyshev's inequality, and the lower bound in \eqref{*treg}, we have
\begin{align}\label{*p1}
\mmm(\{ x:\mmm(A(x,r,\ddd))\geq \eta\cdot \mmm(B(x,r)) \})
& \leq
\mmm(\{ x:\mmm(A(x,r,\ddd)\geq \eta\cdot cr^{t} \})\\
\nonumber
& \leq C_{\mmm}' r^{1/2+\ddd(s-1/2)}I_{s}(\mmm)/ (\eta r^t)\\
\label{eq-ismu}
&=
C_{\mmm}'\eta^{-1}I_{s}(\mmm) r^{t-1/2},
\end{align}
where at the last equality we used \eqref{*p12} and $C_{\mmm}'$ is a suitable constant not depending on $r$.
Hence the right-hand side tends to zero as $r\searrow 0$.
This shows \eqref{*eqanonym}
 with an additional decay rate faster than  $r^{t-1/2}$, and thus completes the proof.
\end{proof}

 However  the above convergence in measure of Proposition \ref{*anonym} is not fast enough to hope to recover Theorems \ref{mainth1} to \ref{mainth4}, at least for the moment.
 
 Let us justify this claim.   Consider a measure $\mu$  supported on $[0,1]^2$ (to ease the argument)  such that the assumptions of Proposition \ref{*anonym} hold.  We would like to apply \eqref{*eqanonym}
 to deduce some estimate for the measure of $E_{\mu}(\delta,\eta)$.
 
 For this, consider equi-distributed points  $(r_{k,m})_{m=0,...,2^{3k}}$  in the interval $[r_{k,0}=2^{-k-1}, r_{k,2^{3k}}=2^{-k}]$. The distance between two consecutive $r_{k,m}$ and $r_{k,m+1}$  is $2^{-4k-1}$. If $x$ is such that $P_{\mu }(x,r,4,\eta)$ holds true for  $r\in[2^{-k-1},2^{-k}]$, then   $P_{\mu }(x,r_{k,m},4,\eta/2)$   holds for some $m$.  So, the set of points $\{x:P_{\mu }(x,r,4,\eta)$ holds true for some $r\in[2^{-k-1},2^{-k}]\}$  has $\mu$-measure less than
\begin{align*}
\sum_{k=0}^{2^{3k}} \mu(\{x:P_{\mu }(x,r_{k,m},4,\eta/2) \mbox{ holds true}\}  &  \leq \sum_{k=0}^{2^{3k}}C \eta^{-1}2^{-k(t-1/2)} \\
& \leq C 2^{ k(7/2-t)} 
 \end{align*}
 by \eqref{eq-ismu}. Unfortunately, 
 keeping in mind that $t\leq 2$ we have
 $\sum_{k}2^{ k(7/2-t)} =+\infty$ and the  Borel--Cantelli lemma cannot be applied (by far!) to prove Theorem \ref{mainth1} or Theorem \ref{mainth4}.
 
 Trying to optimize the choice of $s$ or $\delta$ (instead of 4) does not help either, using similar arguments.

\section*{Acknowledgments}

The authors thank Beno\^it Saussol for asking the question treated in this paper, Jean-Ren\'e Chazottes for interesting discussions and relevant references, 
Marius Urba\'nski for informing us about  \cite{Urbanski-annuli}  and   an anonymous reader   for turning our attention to the results and methods  developed for Falconer's distance problem.

\bibliographystyle{plain}
 
\bibliography{mca}

\end{document}